\documentclass[11pt,letterpaper]{amsart}
\usepackage{amsmath}
\usepackage{amssymb}
\usepackage{amsthm}
\usepackage[msc-links,abbrev]{amsrefs}
\usepackage{hyperref}
\usepackage[noabbrev,capitalize]{cleveref}
\usepackage{mathrsfs}
\usepackage{mathtools}
\usepackage{slashed}
\usepackage{tikz-cd}
\usepackage{enumitem}

\setenumerate{label=(\roman*)}

\DeclareMathOperator{\Id}{Id}

\DeclareMathOperator{\im}{im}
\DeclareMathOperator{\tr}{tr}
\DeclareMathOperator{\Vol}{Vol}
\DeclareMathOperator{\dvol}{dV}

\DeclareMathOperator{\Ric}{Ric}

\newcommand{\Sch}{\mathsf{P}}

\newcommand{\cg}{\widetilde{g}}

\newcommand{\cH}{\widetilde{H}}

\newcommand{\cL}{\widetilde{L}}

\newcommand{\cN}{\widetilde{N}}




\newcommand{\mB}{\mathcal{B}}

\newcommand{\mE}{\mathcal{E}}

\newcommand{\mM}{\mathcal{M}}

\newcommand{\mP}{\mathcal{P}}
\newcommand{\mQ}{\mathcal{Q}}

\newcommand{\mU}{\mathcal{U}}
\newcommand{\mV}{\mathcal{V}}
\newcommand{\mW}{\mathcal{W}}

\newcommand{\kg}{\mathfrak{g}}

\newcommand{\bB}{\mathbb{B}}

\newcommand{\bN}{\mathbb{N}}

\newcommand{\bR}{\mathbb{R}}
\newcommand{\bS}{\mathbb{S}}

\newcommand{\bZ}{\mathbb{Z}}



\def\sideremark#1{\ifvmode\leavevmode\fi\vadjust{\vbox to0pt{\vss
 \hbox to 0pt{\hskip\hsize\hskip1em
 \vbox{\hsize3cm\tiny\raggedright\pretolerance10000
 \noindent #1\hfill}\hss}\vbox to8pt{\vfil}\vss}}}


\newtheorem{theorem}{Theorem}[section]

\newtheorem{lemma}[theorem]{Lemma}

\theoremstyle{definition}

\newtheorem{question}[theorem]{Question}

\theoremstyle{remark}
\newtheorem{remark}[theorem]{Remark}

\numberwithin{equation}{section}

\makeatletter
\@namedef{subjclassname@2020}{\textup{2020} Mathematics Subject Classification}
\makeatother

\begin{document}

\title{Energy reduction for Fourth order Willmore energy}

\author[N. Wu]{Nan Wu}
\address[N. Wu]{Department of Mathematics \\ University of Notre Dame \\ Notre Dame \\ IN 46656 \\ USA}
\email{nwu2@nd.edu}

\author[Z. Yan]{Zetian Yan}
\address[Z. Yan]{Department of Mathematics \\ UC Santa Barbara \\ Santa Barbara \\ CA 93106 \\ USA}
\email{ztyan@ucsb.edu}

\keywords{fourth order Willmore energy, extrinsic Paneitz operator, energy identity} 
\subjclass[2020]{30C70, 49Q10, 58E15, 30C70}
\begin{abstract}
We introduce a fourth-order Willmore-type problem for closed four-dimensional submanifolds immersed in $\bR^n$ and establish a connected sum energy reduction for the general fourth-order Willmore energy, analogous to the seminal result of Bauer and Kuwert \cite{Bauer-Kuwert03}. 
 
\end{abstract}
\maketitle

\section{introduction}
The Willmore energy, named after the mathematician Thomas Willmore, is a quantitative measure of how much an immersed surface deviates from a round sphere.
For an immersed closed surface \( f \colon \Sigma^2 \hookrightarrow \mathbb{R}^n \), the Willmore energy is defined by
\begin{equation*}
    \mathcal{W}(f) := \int_{\Sigma} |H|^2 \, \dvol_g,
\end{equation*}
where \( H \) and \( g \) denote the averaged mean curvature vector and the induced metric of \( f \), respectively.
Willmore showed that round
spheres have the least possible Willmore energy among all compact surfaces
in $\mathbb{R}^3$. More precisely, for every immersed compact surface $f:\Sigma \to \mathbb{R}^n$, the Willmore energy satisfies
\begin{equation*}
W(f) \geq 4 \pi
\end{equation*}
with equality only for round spheres. If \( \mathcal{W}(f) < 8\pi \), then \( f \) is an embedding by a result of Li and Yau \cite{Li-Yau82}. Furthermore, Bryant \cite{Bryant84} classified all Willmore spheres in codimension one.

An interesting question is to investigate the minimizing problem for the Willmore
energy over all closed surfaces in $\mathbb{R}^n$ of a given topological type.
To be precise, for fixed genus \( p \geq 0 \) and ambient dimension \( n \geq 3 \), we define
\begin{equation*}
    \beta^n_p := \inf_{\mathcal{S}^n_p} \mathcal{W}(f),
\end{equation*}
where \( \mathcal{S}^n_p \) denotes the set of all immersions from closed orientable surfaces of genus \( p \) in $\mathbb{R}^n$. 
It was proved independently by Kusner and Pinkall (see \cite{kuhnel1986total} and \cite{Kusner}) that a Lawson minimal surface of genus $p$, for every $p$, has area strictly smaller than $8\pi$. This implies $\beta_{p}^n < 8\pi$. Later Kuwert, Li and Schatzle \cite{Kuwert2010TheLG} showed that $\beta^n_p$ tends to $8\pi$ as $p$ tends
to infinity.

In \cite{Simon93}, Simon proved the existence of a smooth minimizing Willmore torus attaining $\beta^n_1$ by geometric measure theory and proposed a criterion to verify the existence of such minimizing  Willmore surfaces with fixed topology of genus $p\geq 2$:
\begin{theorem}[Simon's Criterion]
    For any \( p \in \mathbb{N} \), the infimum \( \beta^n_p \) is achieved by a closed embedded surface if
    \begin{equation}\label{eq:topological-criterion}
        e^n_p < \sum_{r} e^n_{p_r}, \quad \text{where } e^n_p := \beta^n_p - 4\pi,
    \end{equation}
    holds for any partition \( p = p_1 + \cdots + p_r \), and \( \beta^n_{p_r} \) corresponds to a Willmore surface of genus \( p_r \).
\end{theorem}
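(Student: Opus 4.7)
The plan is to apply the direct method to the class of immersions of $\Sigma_p$, exploiting the Möbius invariance of $\mathcal{W}$ to rescue compactness. First I would take a minimizing sequence $f_k \colon \Sigma_p \hookrightarrow \bR^n$ with $\mathcal{W}(f_k) \to \beta^n_p$; conjugating by translations and dilations of $\bR^n$, each $f_k$ is normalized so that its image has unit diameter and is centered at the origin. The Li-Yau inequality then yields uniform area bounds, and the integral $2$-varifolds $\mu_k$ associated with $f_k(\Sigma_p)$ converge weakly along a subsequence to an integral $2$-varifold $V$ with $\mathcal{W}(V) \leq \beta^n_p$.

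The next step invokes Simon's $\varepsilon$-regularity and monotonicity theory for Willmore varifolds to show that $V$ is smooth and locally graphical away from finitely many concentration points. Rescaling via a Möbius inversion at each concentration point exposes a new bubble, and by induction on the total energy this procedure terminates, producing finitely many smooth closed immersed Willmore surfaces $f^{(1)}, \ldots, f^{(r)}$ with $f^{(j)} \colon \Sigma_{p_j} \to \bR^n$ arranged as a bubble tree connected by shrinking necks. If $r = 1$ and $p_1 = p$, the single bubble $f^{(1)}$ is the desired minimizer. Otherwise $r \geq 2$, and a Gauss-Bonnet accounting across the vanishing necks forces the genus identity $p = p_1 + \cdots + p_r$.

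The final step is the energy identity. Each of the $r-1$ Möbius inversions used to unfold the bubble tree is centered at a point of the underlying surface, and by Chen's inversion formula $\mathcal{W}(\iota \circ f) = \mathcal{W}(f) - 4\pi$ each such inversion subtracts exactly $4\pi$. Consequently,
\begin{equation*}
\beta^n_p \;=\; \lim_{k \to \infty} \mathcal{W}(f_k) \;=\; \sum_{j=1}^r \mathcal{W}(f^{(j)}) - (r-1)\cdot 4\pi \;\geq\; \sum_{j=1}^r \beta^n_{p_j} - (r-1)\cdot 4\pi,
\end{equation*}
which upon subtracting $4\pi$ from both sides reads $e^n_p \geq \sum_{j=1}^r e^n_{p_j}$, contradicting \eqref{eq:topological-criterion}. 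Hence $r = 1$, a closed Willmore minimizer of genus $p$ exists, and since $\beta^n_p < 8\pi$ by the Kusner-Pinkall estimate noted above, the Li-Yau inequality forces it to be embedded. The main technical difficulty will be the varifold regularity and bubble extraction: one must invoke a delicate $\varepsilon$-regularity theorem for weak Willmore immersions and carefully track how topology, singular set, and energy distribute across scales.
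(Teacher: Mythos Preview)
The paper does not contain a proof of this statement. Simon's Criterion is quoted in the introduction as a background result attributed to \cite{Simon93}; the present paper's original contributions concern the fourth-order Willmore energy $\mE^{(\mu,\nu)}$, and no argument for the classical criterion is supplied here. So there is nothing in this paper to compare your proposal against.

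That said, your outline is a recognizable sketch of Simon's actual strategy: normalize a minimizing sequence by the M\"obius group, pass to a varifold limit, invoke $\varepsilon$-regularity to isolate finitely many concentration points, blow up to extract bubbles, and compare the total limiting energy with $\beta^n_p$. Two points deserve care if you intend to flesh this out. First, the energy accounting in the bubble tree is not literally an application of an inversion formula of the form $\mathcal{W}(\iota\circ f)=\mathcal{W}(f)-4\pi$; rather, Simon shows by a direct neck analysis that each degenerating neck costs at least $4\pi$ in the limit, and the limit bubbles are genuine closed Willmore surfaces (not inverted punctured ones). Second, the step ``$\beta^n_p = \lim \mathcal{W}(f_k) = \sum_j \mathcal{W}(f^{(j)}) - (r-1)4\pi$'' should be an inequality $\geq$, which is all you need for the contradiction; establishing equality (no energy loss in necks) is a separate and harder question. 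With those corrections the logical skeleton is sound, but the hard analysis---Simon's graphical decomposition lemma, the biharmonic comparison on annuli, and the removable-singularity theorem for Willmore surfaces---is where the real work lies.
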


Later, Bauer and Kuwert\cite[Theorem 1.3]{Bauer-Kuwert03} verified the validity of \eqref{eq:topological-criterion} by constructing test surfaces via connected sums:
\begin{theorem}[Connected Sum Energy Reduction]
    Let \( f_i \colon \Sigma_i^2 \hookrightarrow \mathbb{R}^n \), \( i = 1, 2 \), be two smoothly immersed closed surfaces. If neither \( f_1 \) nor \( f_2 \) is a round sphere (i.e., totally umbilic), there exists an immersed surface \( f \colon \Sigma^2 \hookrightarrow \mathbb{R}^n \) with topological type of \( \Sigma^2 = \Sigma_1^2 \# \Sigma_2^2 \) such that
    \begin{equation}\label{Connected-Sum-Energy-Reduction-2}
        \mathcal{W}(f) < \mathcal{W}(f_1) + \mathcal{W}(f_2) - 4\pi.
    \end{equation}
\end{theorem}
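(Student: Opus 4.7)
The plan is to adapt the Möbius-inversion and cutoff-gluing construction of Bauer and Kuwert. The starting point is the Gauss-equation identity $|H|^2 = \tfrac{1}{2}|A^\circ|^2 + K$, where $A^\circ$ is the trace-free second fundamental form and $K$ the intrinsic Gauss curvature, which combined with Gauss--Bonnet gives the decomposition
\begin{equation*}
\mathcal{W}(f) \;=\; \tfrac{1}{2}\int_{\Sigma}|A^\circ|^2\,\dvol_g \;+\; 2\pi\chi(\Sigma),
\end{equation*}
splitting $\mathcal{W}$ into a pointwise conformal invariant and a purely topological term. Applied to a complete noncompact surface with one asymptotically planar end, together with the Cohn--Vossen/Huber formula for its Gauss--Bonnet deficit, this yields the key identity $\mathcal{W}(I\circ f) = \mathcal{W}(f) - 4\pi$ whenever $I$ is a Möbius inversion centered at a regular embedded point of $f(\Sigma)$. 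This is the mechanism that produces the $-4\pi$ in the statement.

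I would next pick points $p_i\in\Sigma_i$ with $|A_i^\circ(p_i)|>0$ (which exist since neither $f_i$ is totally umbilic), and use ambient isometries, dilations, and Möbius inversions (all preserving $\mathcal{W}$) to normalize $f_i(p_i)=0$, $T_{p_i}\Sigma_i=\mathbb{R}^2\times\{0\}\subset\mathbb{R}^n$, and align the 2-jets of $f_1$ at $p_1$ and of $f_2$ at $p_2$ as closely as the conformal freedom allows. Applying inversion $I$ at the origin to $f_2$ produces a complete noncompact $\tilde f_2:=I\circ f_2$ with $\mathcal{W}(\tilde f_2)=\mathcal{W}(f_2)-4\pi$, whose end at infinity is represented as the graph of a function $u_2$ defined outside a large ball in $\mathbb{R}^2$, whose angular limit is governed by the 2-jet of $f_2$ at $p_2$. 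Writing $f_1$ near $p_1$ as the graph of $u_1$ with $u_1(x)=\tfrac{1}{2}A_1(p_1)(x,x)+O(|x|^3)$, rescaling $\tilde f_2$ by a small $\lambda>0$ to obtain $u_2^\lambda(x):=\lambda u_2(x/\lambda)$, and choosing a smooth radial cutoff $\eta$ equal to $1$ outside $B(0,2\rho)$ and $0$ inside $B(0,\rho)$, I define $f\colon\Sigma_1\#\Sigma_2\to\mathbb{R}^n$ by letting $f=f_1$ outside $B(0,2\rho)$, $f=\lambda\tilde f_2$ inside $B(0,\rho)$, and on the connecting annulus letting $f$ be the interpolated graph $u=\eta u_1 + (1-\eta)u_2^\lambda$, in a parameter regime $\lambda\ll\rho\ll 1$ to be specified. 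Topologically $\chi(f)=\chi_1+\chi_2-2$.

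By the decomposition recorded above, the desired strict inequality reduces to
\begin{equation*}
\int_f |A^\circ|^2\,\dvol \;<\; \int_{f_1}|A_1^\circ|^2\,\dvol \;+\; \int_{f_2}|A_2^\circ|^2\,\dvol .
\end{equation*}
Splitting the left-hand side into the three regions, the outer piece equals $\int_{f_1}|A_1^\circ|^2 - \int_{f_1\cap B(p_1,2\rho)}|A_1^\circ|^2$, and the inner piece equals, by scale and conformal invariance of the integrand, $\int_{f_2}|A_2^\circ|^2$ minus an error of order $(\lambda/\rho)^2$ that vanishes as $\lambda/\rho\to 0$. Since $|A_1^\circ(p_1)|>0$, the excision on the $f_1$ side produces a fixed positive loss of order $\rho^2|A_1^\circ(p_1)|^2$. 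The main obstacle, and the technical heart of the argument, is to show that the annulus contribution $\int_{A(\rho,2\rho)}|A^\circ|^2\,\dvol$ is strictly smaller than this loss. The a priori dangerous terms are the cross-terms $D\eta\otimes D(u_1-u_2^\lambda)$ and $D^2\eta\,(u_1-u_2^\lambda)$ in the Hessian of $u$, which are of order one on the annulus and give a naive annulus contribution of order $\rho^2$ with a constant that must be controlled. The essential use of the Möbius preprocessing is to set $H_1(p_1)=0$ (so that $|A_1(p_1)|=|A_1^\circ(p_1)|$) and to match the trace-free $A_1^\circ(p_1)$ with $A_2^\circ(p_2)$, which makes $u_1-u_2^\lambda$ and its first derivative on the annulus of higher order than naive estimates give; together with a judicious choice of the cutoff $\eta$, this forces the annulus constant strictly below $4\pi|A_1^\circ(p_1)|^2$. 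Once that refined estimate is secured, combining with $\chi(f)=\chi_1+\chi_2-2$ closes the argument and yields $\mathcal{W}(f)<\mathcal{W}(f_1)+\mathcal{W}(f_2)-4\pi$ for all sufficiently small $\lambda,\rho$.
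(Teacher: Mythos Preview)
Your overall architecture---the inversion identity $\mathcal{W}(I\circ f)=\mathcal{W}(f)-4\pi$, followed by a gluing of $f_1$ to a blown-down inverted $\tilde f_2$---is exactly the Bauer--Kuwert strategy that the paper adapts to four dimensions. The gap is in the transition region: the cutoff interpolation $u=\eta u_1+(1-\eta)u_2^\lambda$ does not work as you claim, and the ``M\"obius preprocessing'' cannot rescue it.

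The concrete problem is that $u_1$ and $u_2^\lambda$ have different homogeneities on the annulus $\rho\le|x|\le 2\rho$. Writing $P=A_1^\circ(p_1)$ and $R=A_2^\circ(p_2)$, one has $u_1(x)\approx\tfrac12 P(x,x)=O(\rho^2)$, while the end of the inverted surface satisfies $u_2(\zeta)\to\tfrac12 R(\zeta/|\zeta|,\zeta/|\zeta|)$ as $|\zeta|\to\infty$, so $u_2^\lambda(x)\approx\tfrac{\lambda}{2}R(x/|x|,x/|x|)=O(\lambda)$. Even if one forces $P=R$ by rotations, the difference $u_1-u_2^\lambda\approx\tfrac12(1-\lambda|x|^{-2})P(x,x)$ is still of order $\rho^2$ on the annulus, not of higher order; the cutoff terms $D^2\eta\,(u_1-u_2^\lambda)$ and $D\eta\otimes D(u_1-u_2^\lambda)$ remain $O(1)$ and their contribution to $\int|A^\circ|^2$ is of the same order $\rho^2$ as the excision saving, with a constant you have not controlled. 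Your mechanism also only uses $|A_1^\circ(p_1)|>0$ and gives no genuine role to the hypothesis that $f_2$ is not totally umbilic.

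What Bauer--Kuwert actually do (and what the present paper reproduces in the four-dimensional case with a triharmonic equation) is to replace the cutoff by the \emph{biharmonic} interpolation: on the annulus one solves $\Delta_0^2 w=0$ with Dirichlet and Neumann data matching $\mathring p_\alpha$ at the inner boundary and $q_{\beta^{-1}}$ at the outer boundary. The biharmonic energy of $w$ can then be computed explicitly via separation into spherical harmonics, and the leading-order comparison with the excised energies produces a term of the form $c_1\alpha^2|P|^2-c_2\alpha\beta\langle P,R\rangle$. This is why \emph{both} $P\ne 0$ and $R\ne 0$ are required: the strict saving comes from the cross term $\langle P,R\rangle$, made positive by an algebraic alignment lemma (rotating in $\mathbb{O}(2)\times\mathbb{O}(n-2)$), after which one chooses the scale ratio $\beta/\alpha$ large enough to make the expression negative. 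Your proposal is missing precisely this interpolation-and-cross-term mechanism.
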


For further studies on the regularity of Willmore surfaces, removable singularities, and related topics, we refer readers to \cite{Bernard-Riviere14, Kuwert-SchatzleS04, Riviere08, Laurain2016EnergyQO} and references therein.

In the past decade, conformal geometers and physicists were searching for extrinsic conformal invariants on four-dimensional immersed submanifolds. It was first discovered by Guven \cite{Guven05} on hypersurfaces in $\bR^5$, and extended to higher codimensions and curved ambient spaces by Graham--Reichert \cite{Graham-Reichert20} and Zhang \cite{Zhang21} via the renormalized area in Poincar\'{e}--Einstein metric. Main results in aforementioned articles imply that for a closed immersion $\Phi: \Sigma^4 \hookrightarrow \bR^n$, the functional
\begin{equation}
    \mE_{GR}(\Phi):=\int_{\Sigma} \left(\big|\nabla^{\perp}H\big|^2-\big|L^{t}H\big|^2+7|H|^4\right)\dvol,
\end{equation}
where $L$ is the second fundamental form and $\nabla^{\perp}$ is the normal connection, is invariant under conformal transformations on $\bR^n$. $\mE_{GR}$ is a multiple of the fourth order Willmore energy in \cite{Graham-Reichert20, Zhang21}. In particular, Graham and Reichert demonstrated that $\mE_{GR}$ is neither bounded from below nor above for immersions of manifolds with certain topological types-such as 
$\mathbb{S}^1 \times \mathbb{S}^3$ and $\mathbb{S}^2 \times \mathbb{S}^2$-while proving that the round sphere remains stable; see \cite{Graham-Reichert20} for further details. Furthermore, Martino showed that $\mathcal{E}_{GR}$ is unbounded below on any closed hypersurface $\Sigma^4 \to \mathbb{R}^5$ in \cite{Martino23}. After the first publication of Martino's result, Graham–Reichert mentioned that they already had a proof in a personal communication. The physical interpretation for such unboundedness feature was shown in \cite{Anastasiou2025}.

For embeddings of four-manifolds into $\bR^n$ with $n \geq 6$, the space of global conformal invariants is six-dimensional: a basis is $\mE_{GR}$, the invariant $\mathcal{I}$ defined in \cite{CGKTW25} and four contractions of $\mathring{L}$ (the traceless part of the second fundamental form) which are obtained by breaking $\mathring{L}_{\alpha\beta}^{\alpha'}\mathring{L}_{\gamma\delta}^{\beta'}$ into symmetric and antisymmetric-in-normal pieces and then taking possible contractions; see \cite[Section 6]{CGKTW25} for more details. While for embeddings into $\bR^5$, the space of global conformal invariants is four-dimensional and spanned by 
\begin{equation*}
 \mE_{GR},\quad   \int_{\Sigma} |\mathring{L}|^4\dvol,\quad  \int_{\Sigma}|\mathring{L}^2|^2 \dvol, \quad \chi(\Sigma^4);
\end{equation*}
see \cite{Mondino-Nguyen18} for precise statement.

In order to fix aforementioned unbounded issue for $\mE_{GR}$, the modified invariant considered now is in the form of 
\begin{equation}\label{general-fourth-order-Willmore-energy}
   \mE^{(\mu,\nu)}(\Phi):=\mE_{GR}(\Phi)+\mu \int_{\Sigma} |\mathring{L}|^4\dvol+\nu \int_{\Sigma}|\mathring{L}^2|^2 \dvol,
\end{equation}
for $\mu, \nu \in \bR.$ In particular, by setting $\nu=0$ and $\mu \geq \frac{1}{12}$, we have $ \mE^{(\mu,0)}\geq 8\pi^2$ with equality if and only if $\Phi$ parametrizes a round sphere.   We refer readers to \cite{Blitz-Gover-Waldon24,Gover-Waldon20,Case-Graham-Kuo} for further studies on algebraic properties, physical interpretations, etc. of $\mE^{(\mu,\nu)}$. 

In this paper, we derive the following connected sum energy reduction inequality for general fourth order Willmore energy $\mE^{(\mu,\nu)}$. 
\begin{theorem}\label{Main 1}
    Let $\Phi_i: \Sigma^4_i \to \mathbb{R}^n $ be two smoothly immersed, closed submanifolds in $\mathbb{R}^n$. If neither $\Phi_1$ or $\Phi_2$ is not a round sphere, then there is an immersed submanifold $\Phi: \Sigma^4 \to \mathbb{R}^n$ with topological type of the connected sum $\Sigma_1 \# \Sigma_2$, such that for any $\mu, \nu \in \bR$,
    \begin{equation}\label{Connected-Sum-Energy-Reduction-4}
         \mathcal{E}^{(\mu, \nu)}(\Phi) < \mathcal{E}^{(\mu, \nu)}(\Phi_1) + \mathcal{E}^{(\mu, \nu)}(\Phi_2) -8\pi^2.
    \end{equation}
\end{theorem}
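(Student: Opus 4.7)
The plan is to adapt the inversion-and-surgery construction of Bauer--Kuwert to this higher-order, four-dimensional setting, exploiting the pointwise conformal invariance of each integrand of $\mathcal{E}^{(\mu,\nu)}$. Since $|\nabla^\perp H|^2 - |L^tH|^2 + 7|H|^4$, $|\mathring{L}|^4$, and $|\mathring{L}^2|^2$ are all conformally invariant densities on $\Sigma$, every ambient inversion and dilation preserves $\mathcal{E}^{(\mu,\nu)}$ on closed submanifolds. Using that $\Phi_2$ is not totally umbilic, fix a non-umbilic point $p_2 \in \Sigma_2$ and, by an ambient isometry, place $\Phi_2(p_2)$ at the origin with its tangent $4$-plane equal to a fixed coordinate $4$-plane $P \subset \mathbb{R}^n$. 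The spherical inversion $I(x) = x/|x|^2$ converts $\Phi_2$ into an immersion $\widetilde{\Phi}_2 \colon \Sigma_2 \setminus\{p_2\} \to \mathbb{R}^n$ which, near infinity, is a smooth graph $\widetilde{u}$ over $P$ with the standard inverted-graph asymptotics $|\nabla^k \widetilde{u}(x)| = O(|x|^{-2-k})$; pointwise conformal invariance then yields $\mathcal{E}^{(\mu,\nu)}(\widetilde{\Phi}_2) = \mathcal{E}^{(\mu,\nu)}(\Phi_2)$.

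For a small parameter $\varepsilon > 0$, rescale to $\varepsilon \widetilde{\Phi}_2$, whose non-planar core lies in a ball of radius $\sim\varepsilon$ about the origin and whose graph function $w_\varepsilon(x) := \varepsilon\widetilde{u}(x/\varepsilon)$ satisfies $|\nabla^k w_\varepsilon(x)| = O(\varepsilon^3/|x|^{2+k})$. Choose a non-umbilic $p_1 \in \Phi_1(\Sigma_1)$ and, after another conformal move of $\Phi_1$ (which leaves $\mathcal{E}^{(\mu,\nu)}(\Phi_1)$ unchanged), arrange $T_{p_1}\Phi_1 = P$, so that $\Phi_1$ is locally a graph $u_1$ with $u_1(p_1) = 0$ and $du_1(p_1) = 0$. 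Translate $\varepsilon\widetilde{\Phi}_2$ to sit at $p_1$ and, on the annulus $\varepsilon \ll |x-p_1| \ll \sqrt{\varepsilon}$, interpolate between $u_1$ and $w_\varepsilon$ via a smooth cutoff, yielding an immersion $\Phi \colon \Sigma_1 \# \Sigma_2 \to \mathbb{R}^n$ of the correct topological type. Decomposing $\mathcal{E}^{(\mu,\nu)}(\Phi)$ into contributions from (i) the unmodified piece of $\Phi_1$, (ii) the unmodified piece of $\varepsilon\widetilde{\Phi}_2$, and (iii) the neck region, scale-and-conformal invariance gives (i) $=\mathcal{E}^{(\mu,\nu)}(\Phi_1) + o_\varepsilon(1)$ and (ii) $= \mathcal{E}^{(\mu,\nu)}(\Phi_2) + o_\varepsilon(1)$. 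The $-8\pi^2$ reduction reflects that the asymptotic plane of $\varepsilon\widetilde{\Phi}_2$ is absorbed into the tangent plane of $\Phi_1$ rather than being closed by an independent round $4$-sphere cap; the resulting saving equals $\mathcal{E}^{(\mu,\nu)}(S^4) = 8\pi^2$ for every $(\mu,\nu)$, since $\mathring{L} \equiv 0$ on $S^4$ and a direct computation (using $|H| = 1$, $|L^tH|^2 = 4|H|^4$, $|\nabla^\perp H|^2 = 0$) gives $\mathcal{E}_{GR}(S^4) = 3\Vol(S^4) = 8\pi^2$.

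The principal technical obstacle is controlling the neck contribution for the fourth-order integrand $|\nabla^\perp H|^2$, which requires four derivatives of the interpolating cutoff. On the annulus $\varepsilon \ll r \ll \sqrt{\varepsilon}$ each cutoff derivative costs a factor $r^{-1}$, while $|\nabla^k w_\varepsilon| = O(\varepsilon^3/r^{2+k})$ and $|\nabla^k u_1| = O(r^{\max(0,\, 2-k)})$. Careful bookkeeping --- perhaps via a multi-parameter cutoff or a $C^3$-matched interpolation profile in lieu of a single cutoff --- should show that each of the five integrands $|\nabla^\perp H|^2$, $|L^tH|^2$, $|H|^4$, $|\mathring{L}|^4$, $|\mathring{L}^2|^2$ contributes $o_\varepsilon(1)$ to the neck energy, uniformly in $(\mu,\nu) \in \mathbb{R}^2$ after bounding each by its absolute value. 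Since the $-8\pi^2$ saving is $\varepsilon$-independent, combining these estimates yields the strict inequality \eqref{Connected-Sum-Energy-Reduction-4} for all sufficiently small $\varepsilon$; closing the fourth-order estimates simultaneously for all five integrands, with uniformity in $(\mu,\nu)$, is the principal new work relative to the two-dimensional Bauer--Kuwert argument.
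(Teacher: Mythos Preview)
Your proposal has two genuine gaps.

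First, the claim that $|\nabla^\perp H|^2 - |L^tH|^2 + 7|H|^4$ is a pointwise conformally invariant density is false. Only $|\mathring{L}|^4$ and $|\mathring{L}^2|^2$ are pointwise conformal invariants; the $\mathcal{E}_{GR}$ integrand is merely a \emph{global} conformal invariant on closed submanifolds, because its conformal variation equals $P_4(\omega)$ for the extrinsic Paneitz operator $P_4$, which is a divergence. When you invert at a point of the submanifold the manifold acquires an end, the boundary term from $P_4(\omega)$ does not vanish, and one computes (this is the paper's Theorem~\ref{Main 2}) $\mathcal{E}^{(\mu,\nu)}(\widetilde{\Phi}_2)=\mathcal{E}^{(\mu,\nu)}(\Phi_2)-8\pi^2$, not equality. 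So your assertion (ii) $=\mathcal{E}^{(\mu,\nu)}(\Phi_2)+o(1)$ is wrong, and your informal ``sphere-cap'' accounting for the $-8\pi^2$ is not a substitute for this identity.

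Second, and more seriously, even after correcting the bookkeeping to
\[
\mathcal{E}^{(\mu,\nu)}(\Phi)=\mathcal{E}^{(\mu,\nu)}(\Phi_1)+\mathcal{E}^{(\mu,\nu)}(\Phi_2)-8\pi^2+(\text{error}),
\]
showing the error is $o_\varepsilon(1)$ does \emph{not} give the strict inequality~\eqref{Connected-Sum-Energy-Reduction-4}: the $o_\varepsilon(1)$ could perfectly well be positive. The entire substance of the paper (Sections~\ref{connected-sum}--\ref{Conclusion}) is to extract the \emph{sign} of the leading-order error. This requires replacing the cutoff by a triharmonic interpolation on the neck, approximating $\mathcal{E}_{GR}$ by the triharmonic energy $\frac{1}{16}\int|\nabla\Delta w|^2$ via Lemma~\ref{technique--lemma}, and computing explicitly that the leading term is
\[
\gamma^{16}\Vol(\bS^3)\Bigl(18\,|\mathring{P}|^2-12t\,\langle \mathring{P},\mathring{R}\rangle\Bigr)+O(\gamma^{17}),
\]
where $\mathring{P},\mathring{R}$ are the trace-free second fundamental forms of $\Phi_1,\Phi_2$ at the gluing points and $t=\beta/\alpha$ is a free parameter. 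The hypothesis that \emph{both} $\Phi_i$ are non-umbilic is used precisely here: an algebraic lemma (irreducibility of the $\mathrm{O}(4)$-action on trace-free symmetric forms) guarantees rotations making $\langle\mathring{P},\mathring{R}\rangle>0$, after which choosing $t$ large forces the leading coefficient negative. Your single-parameter cutoff construction has no mechanism to produce a negative leading term, and indeed a naive cutoff typically produces a \emph{positive} leading error, so the argument as written cannot close.
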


For proving Theorem \ref{Main 1}, the key ingredient is the following energy identity under the inversion, which is inspired by Theorem 2.2 in \cite{Bauer-Kuwert03}:
\begin{theorem}\label{Main 2}
    Let $\Phi: \Sigma^4\to \bR^n$ be a closed immersed submanifold with $0\in \Phi(\Sigma)$. If $\hat{\Phi}$ is given by inverting $\Phi$ at the sphere of radius $1$ around $0$, i.e., $\hat{\Phi}:=I\circ \Phi$, then 
    \begin{equation}\label{loss of energy}
        \begin{split}
    \mE^{(\mu, \nu)}(\hat{\Phi})&=\mE^{(\mu, \nu)}(\Phi)-8\pi^2\cdot \mathrm{card} \Phi^{-1}(0)\\
    &=\mE^{(\mu, \nu)}(\Phi)-\mE^{(\mu, \nu)}(\mathbb{S}^4)\cdot \mathrm{card} \Phi^{-1}(0).
    \end{split}
    \end{equation}
\end{theorem}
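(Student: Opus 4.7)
My plan is to split the computation into pointwise conformally invariant pieces and a genuinely anomalous piece, then adapt the boundary-residue argument of Bauer--Kuwert to the fourth-order setting. First I would observe that for a conformal change $\hat g = e^{2\omega} g$ of the induced metric arising from a conformal transformation of the ambient space, the traceless second fundamental form is conformally covariant, $\hat{\mathring L}_{ij}^{\alpha} = \mathring L_{ij}^{\alpha}$ as a $(0,2)$-tensor valued in the normal bundle; hence $|\mathring L|_{\hat g}^2 = e^{-2\omega}|\mathring L|_g^2$, and combined with $\dvol_{\hat g} = e^{4\omega}\dvol_g$, the density $|\mathring L|^4\,\dvol$ is a \emph{pointwise} conformal invariant in dimension four. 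An analogous computation yields the same conclusion for $|\mathring L^2|^2\,\dvol$. Since inversion is a conformal diffeomorphism on $\bR^n\setminus\{0\}$ and $\Phi^{-1}(0)$ is finite, pointwise invariance gives
\begin{equation*}
\int_\Sigma |\mathring L(\hat\Phi)|^4\,\dvol_{\hat g} = \int_{\Sigma\setminus\Phi^{-1}(0)} |\mathring L(\Phi)|^4\,\dvol_g = \int_\Sigma |\mathring L(\Phi)|^4\,\dvol_g,
\end{equation*}
and the analogous identity for $|\mathring L^2|^2$. Thus \Cref{Main 2} reduces to proving the corresponding identity for $\mE_{GR}$ alone: $\mE_{GR}(\hat\Phi) = \mE_{GR}(\Phi) - 8\pi^2\cdot\mathrm{card}\,\Phi^{-1}(0)$.

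For this reduced problem, list $\Phi^{-1}(0) = \{p_1,\ldots,p_k\}$. A direct calculation gives $\hat g = |\Phi|^{-4} g$, so the conformal factor $\omega = -2\log|\Phi|$ is smooth on $\Sigma\setminus\Phi^{-1}(0)$ and has a logarithmic singularity $\omega(x) = -2\log r + O(1)$ in geodesic normal coordinates on $\Sigma$ centered at each $p_i$. The crux is to establish a divergence-form conformal transformation identity
\begin{equation*}
\bigl(|\nabla^\perp H|^2 - |L^t H|^2 + 7|H|^4\bigr)_{\hat g}\,\dvol_{\hat g} - \bigl(|\nabla^\perp H|^2 - |L^t H|^2 + 7|H|^4\bigr)_g\,\dvol_g = d\beta
\end{equation*}
on $\Sigma\setminus\Phi^{-1}(0)$, where $\beta$ is a universal $3$-form built from $\omega$ and its covariant derivatives up to order three. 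This is the local/divergence refinement of the global conformal invariance of $\mE_{GR}$ proved in \cite{Graham-Reichert20, Zhang21}, and its explicit structure should be extractable from the extrinsic Paneitz calculus developed in \cite{Case-Graham-Kuo, Blitz-Gover-Waldon24, Gover-Waldon20}.

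Integrating over $\Sigma\setminus\bigsqcup_i B_\eps(p_i)$ and applying Stokes' theorem then produces
\begin{equation*}
\mE_{GR}(\hat\Phi) - \mE_{GR}(\Phi) = -\sum_{i=1}^k \lim_{\eps\to 0^{+}}\oint_{\partial B_\eps(p_i)}\beta,
\end{equation*}
after verifying that $\mE_{GR}(\hat\Phi)$ converges absolutely as an improper integral near each puncture, which follows from writing $\hat\Phi$ as a graph over its tangent $4$-plane at infinity and expanding asymptotically. Each boundary integral can be computed by Taylor-expanding $\omega$ on $\partial B_\eps(p_i) \cong \bS^3(\eps)$: since $\nabla^j\omega$ grows like $\eps^{-j}$ and $\Vol(\bS^3(\eps)) = 2\pi^2\eps^3$, the powers cancel and a finite residue remains. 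The main obstacle is twofold: first, extracting the explicit form of $\beta$ from the conformal-invariance machinery in a way that can be integrated on small geodesic $3$-spheres; and second, verifying that each boundary integral evaluates to exactly $+8\pi^2$. The value $8\pi^2$ is forced by consistency with $\mE_{GR}(\bS^4) = 8\pi^2$: applied to a round $\bS^4$ passing through the origin, whose inversion is another round sphere, the identity can hold only if each preimage contributes exactly one round-sphere's worth of energy to the defect. This ``one-sphere-per-preimage'' loss is the four-dimensional analogue of the $-4\pi$ per puncture in \cite[Theorem~2.2]{Bauer-Kuwert03}.
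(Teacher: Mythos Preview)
Your overall strategy coincides with the paper's: reduce to $\mE_{GR}$ via the pointwise conformal invariance of $|\mathring L|^4\,\dvol$ and $|\mathring L^2|^2\,\dvol$, express the change in the $\mE_{GR}$-density under the conformal factor $\omega=-2\log|\Phi|$ as a divergence, and evaluate the resulting boundary integrals over small spheres around the preimages of $0$. The paper makes this precise by writing $2\mE_{GR}=\int_\Sigma Q_4\,\dvol$ for the extrinsic $Q$-curvature of Case--Graham--Kuo and invoking the exact transformation law $e^{4\omega}\hat Q_4=Q_4+P_4\omega$; integrating $P_4\omega$ by parts produces the explicit boundary integrand $\langle\eta_r,\nabla\Delta(\ln|\Phi|^2)\rangle+T(\nabla\ln|\Phi|^2,\eta_r)$, and a direct Taylor expansion in graph coordinates shows the limit is $-8\Vol(\bS^3)=-16\pi^2$ per preimage (giving $-8\pi^2$ after the factor $\tfrac12$). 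This is exactly the ``universal $3$-form $\beta$'' you allude to, made concrete.

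The genuine gap in your proposal is the determination of the constant. You say the value $8\pi^2$ is ``forced by consistency'' by testing on a round $\bS^4$ through the origin; but (i) the inversion of a sphere through the origin is an affine $4$-plane, not another round sphere, and more importantly (ii) even with this corrected (the plane has $\mE_{GR}=0$, so the check $0=8\pi^2-8\pi^2$ does pass), a single test case only shows what the residue \emph{would have to be} if it were already known to be a universal constant independent of the immersion. You have not argued that the boundary integral $\oint_{\partial B_\eps(p_i)}\beta$ depends only on the leading singular part $\omega\sim-2\log r$ and not on the local geometry of $\Phi$ near $p_i$. That universality is precisely what the paper's explicit computation in graph coordinates establishes: the lower-order contributions from $T$ and from the subleading expansion of $g$ and $\phi$ are shown to be $O(r)$ against the $r^3$ area element and vanish in the limit, leaving only the flat-model term $-8\int_{\bS^3}\langle x/r,x/r^4\rangle r^3\,dA_{\bS^3}$. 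Without either an explicit $\beta$ or a separate argument that the subleading terms do not contribute, your residue step is incomplete.
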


Several comments are in order. First, to compute residues of singularities generated by the inversion, we reformulate $\mE_{GR}$ in terms of the total extrinsic $Q_4$ curvature and perform the transformation law of the extrinsic Paneitz operator $P_4$; see \cite{Case-Graham-Kuo} for more details. Next, the geometric interpretation of (\ref{loss of energy}) is a four-dimensional Huber's theorem and the change of $\chi(\Sigma^4)$ under inversions, which was implicitly demonstrated in dimension two in \cite{Kuwert-SchatzleS04}. Besides, it suggests that the leading term $\int |\nabla H|^2$ in $\mE^{(\mu,\nu)}$ has no contribution to the loss of energy in (\ref{loss of energy}), which is different from the classical case. 


Inspired by the existence of Willmore surfaces (minimizers) with prescribed genus \cite{Bauer-Kuwert03,Simon93} and our main results, researchers have sought higher-order analogues of the Willmore energy which can be used to define a distinguished class of conformally embedded four-manifolds. More precisely, one may ask the following question:
\begin{question}
For fixed immersed closed manifold $\Sigma^4$, we define  
\begin{equation*}
    \gamma^{(\mu, \nu)}_{\chi(\Sigma)} := \inf_{\Phi \in \mathcal{S}^n_{\chi(\Sigma)}} \mE^{(\mu, \nu)} (\Phi),
\end{equation*}
where $\mathcal{S}^n_{\chi(\Sigma)}$ denotes the set of all immersions from closed four-dimensional submanifolds $M^4$ in $\mathbb{R}^n$ with $\chi(M^4)=\chi(\Sigma^4)$. Is there a pair $(\mu,\nu)$ such that for any fixed four-manifold $\Sigma^4$ which can be immersed in $\mathbb{R}^n$, there exists an immersion $\Phi \colon M^4 \to \mathbb{R}^n$ in $S_{\chi(\Sigma)}^n$ which minimizes $\gamma^{(\mu, \nu)}_{\chi(\Sigma)}$?
\end{question}

In the same spirit of \cite{Bauer-Kuwert03}, the rest proof of Theorem \ref{Main 1} is devoted to the construction of the connected sum. To be more specific, we blow down the inverted immersion $\hat{\Phi}_1$, blow up another immersion $\Phi_2$ and use the triharmonic function with prescribed boundary conditions to do the interpolation in the neck region $[\gamma, 1]\times \bS^3$. Note that for a smooth immersion $\Phi$ satisfying
\begin{equation*}
    \Phi(0)=0,\quad D\Phi(0)=0,
\end{equation*}
the asymptotic decay of each end in $\hat{\Phi}$ is $|x|^{-2}$ as $|x|\to \infty$. Due to this fact, the energy saving of $\hat{\Phi}_{1,\alpha}:=\alpha \Phi_1(\alpha^{-1}\cdot)$ is $O(\alpha^2)$, $\alpha<<1$, while the energy saving of $\Phi_{2,\beta^{-1}}:=\beta^{-1}\Phi_2(\beta\cdot)$ is $O(\beta^4), \beta<<1$ in that the homogeneity of $\mE^{(\mu,\nu)}$ is equal to four. Therefore, the energy reduction (\ref{Connected-Sum-Energy-Reduction-4}) only depends on the energy saving of $\hat{\Phi}_{\alpha}$ and the interaction term between $\hat{\Phi}_{1,\alpha}$, $\Phi_{2,\beta^{-1}}$. An algebraic lemma in Section \ref{Conclusion} yields the desired result.

This paper is organized as follows:

In Section \ref{preliminary} we review necessary backgroud material on the submanifold geometry and extrinsic Paneitz operator. Section 3 is devoted to prove the energy identity (\ref{loss of energy}) and demonstrate its geometric interpretation via the Gauss--Bonnet--Chern formula (\ref{complete-GBC}). Section 4 contains two technical lemmas. We determine the required triharmonic function for the interpolation in Section \ref{connected-sum} and calculate relevant bilinear forms in Section \ref{bilinear}. We compute the energy reduction (\ref{Connected-Sum-Energy-Reduction-4}) in Section \ref{energy} and complete the proof of Theorem \ref{Main 1} in Section \ref{Conclusion}.

\section{Preliminaries}\label{preliminary}
\subsection{Submanifold geometry}
For a Riemannian manifold $(M^n,g)$, we denote the Levi--Civita connection by $\nabla^{g}$, the curvature tensor by $R_{ijkl}$, the Ricci tensor by $\Ric$ or $R_{ij}=R^{k}_{\;\;ikj}$, and the scalar curvature by $R=R^i_{\;i}$. The Schouten tensor of $(M^n,g)$ is
\begin{equation*}
    \Sch_{ij}=\frac{1}{n-2}\left(R_{ij}-\frac{R}{2(n-1)}g_{ij}\right)
\end{equation*}
and the Weyl tensor is defined by the decomposition
\begin{equation*}
    R_{ijkl}=W_{ijkl}+\Sch_{ik}g_{jl}-\Sch_{jk}g_{il}-\Sch_{il}g_{jk}+\Sch_{jl}g_{ik}.
\end{equation*}
The Cotton and Bach tensors are
\begin{equation*}
    C_{ijk}=\nabla^g_k\Sch_{ij}-\nabla^g_j\Sch_{ik}, \quad B_{ij}=\nabla^g_kC_{ij}^{\;\;\;k}-\Sch^{kl}W_{kijl}.
\end{equation*}
Latin indices $i, j, k$ run between $1$ and $n$ in local coordinates, or can be interpreted as labels for $TM$ or its dual in invariant expressions such as those above.

We will denote by $\Sigma$ an immersed submanifold of $(M,g)$ of dimension $k$, $1\leq k\leq n-1$. We use $\alpha, \beta, \gamma$ as index labels for $T\Sigma$ and $\alpha', \beta', \gamma'$ for the normal bundle $N\Sigma$. A Latin index $i$ thus specializes either to an $\alpha$ or an $\alpha'$. The restriction of the metric $g_{ij}$ to $\Sigma$ can be identified with the metric $g_{\alpha\beta}$ induced on $\Sigma$ together with the bundle metric $g_{\alpha'\beta'}$ induced on $N\Sigma$.

The \textit{second fundamental form} $L:S^2T\Sigma\to N\Sigma$ is defined by $L(X,Y)=\left(\nabla^g_X Y\right)^{\perp}$. We typically write it as $L^{\alpha'}_{\alpha\beta}$. The mean curvature vector is $H=\frac{1}{k}\tr_g L$.

Under a conformal change of \textbf{the ambient metric} $\tilde{g}=e^{2\phi}g$. As \textit{vector-valued} functions, extrinsic geometric quantities change as follows:
\begin{equation}\label{transformation-vector}
    \cL_{\alpha\beta}=L_{\alpha\beta}-\left(\nabla^g \phi\right)^{\perp}g_{\alpha\beta}, \quad \cH=e^{-2\phi}(H-\left(\nabla^g \phi\right)^{\perp}), \quad \mathring{\cL}_{\alpha\beta}=\mathring{L}_{\alpha\beta}.
\end{equation}
In components, given an orthonormal basis $\{N_{\alpha'}\}_{\alpha'=1}^{n-k}$ for the normal bundle, $\{N_{\alpha'}\}_{\alpha'=1}^{n-k}$ and above quantities change as follows
\begin{equation}\label{transformation-component}
\begin{split}
    \cN_{\alpha'}=e^{-\phi}N_{\alpha'}, &\quad \cL^{\alpha'}_{\alpha\beta}=e^{\phi}\left(L^{\alpha'}_{\alpha\beta}-\left(\nabla^g_{\alpha'} \phi\right)^{\perp}g_{\alpha\beta}\right),\\  \mathring{\cL}^{\alpha'}_{\alpha\beta}=e^{\phi}\mathring{L}^{\alpha'}_{\alpha\beta}, &\quad \cH^{\alpha'}=e^{-\phi}(H^{\alpha'}-\left(\nabla_{\alpha'}^g \phi\right)^{\perp}).
    \end{split}
\end{equation}
From (\ref{transformation-vector}), it is clear that
\begin{equation*}
\begin{split}
|\mathring{\cL}|_{\cg}^2&=\mathring{\cL}^{\alpha'}_{\alpha\beta}\mathring{\cL}^{\beta'}_{\delta\gamma}\cg^{\alpha\delta}\cg^{\beta\gamma}\cg_{\alpha' \beta'}=e^{-2\phi}\mathring{L}^{\alpha'}_{\alpha\beta}\mathring{L}^{\beta'}_{\delta\gamma}g^{\alpha\delta}g^{\beta\gamma}g_{\alpha' \beta'}=e^{-2\phi}|\mathring{L}|_{g}^2,\\
\left(\mathring{\cL}^2\right)_{\alpha\beta}^{\alpha'}&=\left(\mathring{\cL}_{\alpha\delta}\mathring{\cL}_{\beta\gamma}\cg^{\delta\gamma}\right)^{\alpha'}=e^{\phi}\left(e^{-2\phi}\mathring{L}_{\alpha\delta}\mathring{L}_{\beta\gamma}g^{\delta\gamma}\right)^{\alpha'}=e^{-\phi}\left(\mathring{L}^2\right)_{\alpha\beta}^{\alpha'}
\end{split}
\end{equation*}
so $|\mathring{\cL}|_{\cg}^k \dvol_{\cg}=|\mathring{L}|_{g}^k \dvol_{g}$ and $|\mathring{\cL}^2|_{\cg}^{\frac{k}{2}}\dvol_{\cg}=|\mathring{L}^2|_{g}^{\frac{k}{2}}\dvol_{g}$ are global conformal invariants.

For embeddings into $\bR^5$, Mondino and Nyugen's result\cite[Theorem 4.1]{Mondino-Nguyen18} confirms that on hypersurface $\Sigma^4$, the space of global conformal invariants is four-dimensional and spanned by 
\begin{equation}\label{basis-of-global-conformal-invariants}
 \mE_{GR},\quad   \int_{\Sigma} |\mathring{L}|^4\dvol,\quad  \int_{\Sigma}|\mathring{L}^2|^2 \dvol, \quad \chi(\Sigma^4).
\end{equation}
While for embeddings of $\Sigma^4$ into $\bR^n$ with $n \geq 6$, the space of global conformal invariants is six-dimensional: a basis is $\mE_{GR}$, the invariant $\mathcal{I}$ defined in \cite{CGKTW25} and four contractions of $\mathring{L}$ which are obtained by breaking $\mathring{L}_{\alpha\beta}^{\alpha'}\mathring{L}_{\gamma\delta}^{\beta'}$ into symmetric and antisymmetric-in-normal pieces and then taking possible contractions; see \cite[Section 6]{CGKTW25} for more details. In this paper, we will merely consider global conformal invariants listed in (\ref{basis-of-global-conformal-invariants}).

\subsection{Extrinsic GJMS operators and $Q-$curvatures}
In this subsection, we recall the extrinsic Paneitz operator and its $Q_4$ curvature constructed in \cite{Case-Graham-Kuo}.

\begin{theorem}[\cite{Case-Graham-Kuo}]
Let $(M^n,g)$, $n\geq 5$ be a Riemannian manifold and $\Sigma^k\subset M^n$, $4\leq k\leq n-1$ be a smoothly immersed submanifold. The extrinsic GJMS operator $P_4$ on $\Sigma^k$ is given by
    \begin{equation}
P_4=\Delta^2+\nabla^{\alpha}\left(T_{\alpha\beta}\nabla^{\beta}\right)+\frac{k-4}{2}Q_4,
    \end{equation}
where 
\begin{equation}
    \begin{split}
T_{\alpha\beta}=&4\Sch_{\alpha\beta}+4H^{\alpha'}L_{\alpha\beta\alpha'}-\left[(k-2)\Sch_{\gamma}^{\;\;\gamma}+\frac{1}{2}(k^2-2k+4)|H|^2\right]g_{\alpha\beta},\\
Q_4=&-\Delta \left(\Sch_{\alpha}^{\;\;\alpha}+\frac{k}{2}|H|^2\right)-2\big|\Sch_{\alpha\beta}+H^{\alpha'}L_{\alpha\beta\alpha'}-\frac{1}{2}|H|^2g_{\alpha\beta}\big|^2\\
&+2\big|\Sch_{\alpha\alpha'}-\nabla_{\alpha}H_{\alpha'}\big|^2+\frac{k}{2}\left(\Sch_{\alpha}^{\;\;\alpha}+\frac{k}{2}|H|^2\right)^2\\
&-W^{\alpha}_{\;\;\;\alpha'\alpha\beta'}H^{\alpha'}H^{\beta'}-4C^{\alpha}_{\;\; \alpha\alpha'}H^{\alpha'}-\frac{2}{n-4}B^{\alpha}_{\;\;\alpha}.
\end{split}
\end{equation}
\end{theorem}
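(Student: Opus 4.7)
Since this is quoted as a theorem of Case--Graham--Kuo, the plan follows their construction: build $P_4$ as the unique natural fourth-order operator on $\Sigma^k$, with leading symbol $\Delta^2$, that transforms conformally covariantly under ambient conformal rescalings $\tilde g = e^{2\phi}g$. First I would write down the most general such ansatz in divergence form,
\begin{equation*}
P_4 \;=\; \Delta^2 + \nabla^{\alpha}\bigl(T_{\alpha\beta}\nabla^{\beta}\bigr) + \tfrac{k-4}{2}\,Q_4,
\end{equation*}
where $T_{\alpha\beta}$ ranges over symmetric weight-$(-2)$ tensors built from submanifold data (the admissible basis being $\Sch_{\alpha\beta}$, $H^{\alpha'}L_{\alpha\beta\alpha'}$, $\Sch_{\gamma}^{\ \gamma}g_{\alpha\beta}$, and $|H|^2 g_{\alpha\beta}$) and $Q_4$ ranges over weight-$(-4)$ scalars built from Laplacians of weight-$(-2)$ scalars, quadratic combinations of $\Sch$, $L$, $H$, and the ambient contractions $W^{\alpha}_{\ \alpha'\alpha\beta'}H^{\alpha'}H^{\beta'}$, $C^{\alpha}_{\ \alpha\alpha'}H^{\alpha'}$, and $B^{\alpha}_{\ \alpha}$.

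Next I would impose the defining covariance law
\begin{equation*}
\tilde P_4 u \;=\; e^{-(\tfrac{k}{2}+2)\phi}\, P_4\bigl(e^{(\tfrac{k}{2}-2)\phi}u\bigr),
\end{equation*}
expand both sides using the extrinsic transformation rules (\ref{transformation-vector})--(\ref{transformation-component}) together with the standard conformal variations of $\Sch$, $W$, $C$, $B$, and collect terms by the number of derivatives of $\phi$ they contain. The resulting finite linear system in the undetermined coefficients is consistent because the existence of an extrinsic GJMS operator of order four away from the critical dimension is guaranteed by the ambient-metric/tractor construction; solving it produces the explicit formulas for $T_{\alpha\beta}$ and $Q_4$ in the statement. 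Evaluating the covariance identity on $u = 1$ and matching constant coefficients gives the expected $Q_4$ transformation rule $\tilde Q_4 = e^{-4\phi}\bigl(Q_4 + \tfrac{2}{k-4}P_4\phi\bigr)$, which in the critical limit $k\to 4$ becomes the familiar Branson-type formula.

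Finally I would verify the formula against the two limits where it is already known: the intrinsic case $\Sigma = M$ (where $L = H = 0$) must recover the classical intrinsic Paneitz operator and $Q_4$-curvature of $(M,g)$, and the flat ambient case $M = \bR^n$ (where $W$, $C$, $B$, and $\Sch$ vanish) must reduce to the purely extrinsic expression consistent with the Graham--Reichert and Zhang formulation of $\mE_{GR}$ used elsewhere in this paper.

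The main obstacle is the bookkeeping. The Gauss--Codazzi--Ricci equations couple intrinsic and extrinsic curvatures, so the conformal variation of each basis tensor generates a cascade of cross terms that must be tracked carefully. In particular, the appearance of the combination $\Sch_{\alpha}^{\ \alpha}+\tfrac{k}{2}|H|^2$ (rather than either summand alone) is forced by the requirement that its conformal variation mirror the intrinsic $J$-curvature rule, and the dimensional coefficient $\tfrac{2}{n-4}$ in front of $B^{\alpha}_{\ \alpha}$ is inherited from the normalization of the ambient Bach tensor that must be subtracted off when restricting to~$\Sigma$.
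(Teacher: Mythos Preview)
The paper does not prove this theorem; it is quoted verbatim from \cite{Case-Graham-Kuo} as background material in the preliminaries, with no proof given. There is therefore nothing in the paper to compare your proposal against. Your sketch is a reasonable outline of the standard approach (impose conformal covariance on a general ansatz and solve for the coefficients), and is broadly consistent with how such operators are constructed in the literature, but the detailed verification lives entirely in the cited reference, not in this paper.
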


In particular, for a four-dimensional submanifold $\Sigma^4$ immersed into $\bR^n$, we have
\begin{equation}
        P_4=\Delta^2+\nabla^{\alpha}\left(T_{\alpha\beta}\nabla^{\beta}\right),
    \end{equation}
where 
\begin{equation}
    \begin{split}
T_{\alpha\beta}=&4H^{\alpha'}L_{\alpha\beta\alpha'}-6|H|^2 g_{\alpha\beta},\\
Q_4=&-2\Delta |H|^2-2\big|H^{\alpha'}L_{\alpha\beta\alpha'}-\frac{1}{2}|H|^2g_{\alpha\beta}\big|^2+2\big|\nabla_{\alpha}H_{\alpha'}\big|^2+8|H|^4.
\end{split}
\end{equation}
Moreover, under the conformal change $\hat{g}=e^{2\omega}g$, $P_4$ satisfies
\begin{equation}\label{Trans of Q}
e^{4\omega|_{\Sigma}}\hat{Q}_4=Q_4+P_4\left(\omega|_{\Sigma}\right).
\end{equation}
Recall that the fourth order Willmore energy introduced by Graham and Reichert is given by
\begin{equation*}
    \mE_{GR}(\Phi):=\int_{\Sigma} \left(\big|\nabla_{\alpha}H_{\alpha'}\big|^2-\big|H^{\alpha'}L_{\alpha\beta\alpha'}\big|^2+7|H|^4\right)\dvol,
\end{equation*}
or
\begin{equation*}
    \mE_{GR}(\Phi)=\int_{\Sigma} \left(\big|\nabla_{\alpha}H_{\alpha'}\big|^2-\big|H^{\alpha'}\mathring{L}_{\alpha\beta\alpha'}\big|^2+3|H|^4\right)\dvol,
\end{equation*}
where $\mathring{L}_{\alpha\beta\alpha'}$ is the trace-free part of the second fundamental form. By the definition of $Q_4$, on closed submanifold $\Sigma^4$, we have
\begin{equation*}
    \int_{\Sigma} Q_4\dvol=2\mE_{GR}.
\end{equation*}    
\subsection{Spherical harmonics}
In  this subsection, we recall some basic facts concerning spherical harmonics, which will be used in section \ref{connected-sum}. Let $\mathcal{R}$ denote the space of real polynomials on $\bR^{n+1}$ and $\mathcal{H}$ be the subspace of harmonic polynomials. For all $h\in \bN_0$, let $\mathcal{R}_h\subset \mathcal{R}$ denote the space of homogeneous polynomials of degree $h$ and set $\mathcal{H}_h=\mathcal{R}_h\cap \mathcal{H}$. Clearly, we have the following decomposition
\begin{equation*}
    \mathcal{R}=\bigoplus_{h\in \bN_0}\mathcal{R}_h, \quad \quad \mathcal{H}=\bigoplus_{h\in \bN_0}\mathcal{H}_h.
\end{equation*}
It is well-known that 
\begin{equation}
    \mathcal{R}_h=\mathcal{H}_h\oplus |x|^2\mathcal{R}_{h-2} \quad \quad ~~\mbox{for all}~~ h\in \bN_0,
\end{equation}
where $x=(x_1,\cdots, x_{n+1})\in \bR^{n+1}$. 
By the Stone--Weierstrass theorem, we know that the space $L^2(S^{n})$ endowed with the inner product
\begin{equation*}
    (F,G)=\int_{S^{n}}FG\dvol_{g_c},
\end{equation*}
can be decomposed as 
\begin{equation*}
    L^2(S^{n})=\overline{\bigoplus_{h\in\bN_0}\mathcal{H}^{\bS}_{h}},
\end{equation*} 
where $\mathcal{H}^{\bS}_{h}$ is the restriction of $\mathcal{H}_{h}$ to the sphere. 
Through this paper, the superscript $\bS$ will be used to denote the sets of restrictions to $\mathbb{S}^{n}$. The dimension of $\mathcal{H}^{\bS}_{h}$ is 
\begin{equation*}
    \mathrm{dim} (\mathcal{H}^{\bS}_{h})=N_{h}:=\binom{n+h}{n}-\binom{n+h-2}{n}.
\end{equation*}
In particular, in the sequel, we denote $\{X_i\}_{i=1}^{N_1}$, $\{Y_j\}_{j=1}^{N_2}$ and $\{W_k\}_{k=1}^{N_3}$ normalized basis in $\mathcal{H}^{\bS}_{1}$, $\mathcal{H}^{\bS}_{2}$ and $\mathcal{H}^{\bS}_{3}$ on $\mathbb{S}^3$, respectively, where $ N_1=4, \, N_2=9, \, N_3=16.$

\section{Energy identity under inversion}\label{energy-identity}
The main purpose of this section is to prove the energy identity (\ref{loss of energy}) and demonstrate its geometric interpretation. In the sequel, we assume that $\Sigma^4$ is a closed immersed submanifold in $\bR^n$ given by $\Phi: \Sigma^4\hookrightarrow \bR^n$. 
\subsection{Energy identity}
Without loss of generality, we may assume that $0\in \Phi(\Sigma)$. We consider the inversion centered at the origin of radius $1$: $I: x\to \frac{x}{|x|^2}$. Using the formula
\begin{equation*}
    DI(X)=\frac{1}{|X|^2}\left(\Id_{\bR^n}-2\frac{X}{|X|}\otimes \frac{X}{|X|}\right), ~~\mbox{for}~~ X\neq 0,
\end{equation*}
direct calculation yields
\begin{equation*}
    I^{*}g_0=\frac{1}{|X|^4}g_0,
\end{equation*}
which implies that
\begin{equation*}
    I: (\bR^n\backslash \{0\}, I^{*}g_0)\to (\bR^n\backslash \{0\}, g_0)
\end{equation*}
is an isometry.

Besides, composing with $\Phi$, we obtain an immersion of the manifold $\hat{\Sigma}=\Sigma\backslash \Phi^{-1}\{0\}$ into $\bR^n$ given by $I\circ \Phi: \hat{\Sigma}\hookrightarrow \bR^n$ with induced metric $\hat{g}=\frac{1}{|\Phi|^4}g$, i.e. $\omega=-\ln |\Phi|^2$.

\begin{proof}[Proof of Theorem \ref{Main 2}]
Recall that the inversion establishes an isometry between the following two pairs
\begin{equation*}
    I: \hat{\Sigma}\xhookrightarrow{\Phi}(\bR^n\backslash \{0\}, I^{*}g_0) \to \hat{\Sigma}\xhookrightarrow{I\circ \Phi}(\bR^n\backslash \{0\}, g_0).
\end{equation*}
Therefore, from the transformation law (\ref{Trans of Q}) for $Q_4$, we know that
\begin{equation*}
    -2\int_{\hat{\Sigma}}\hat{\Delta}|\hat{H}|^2\dvol_{\hat{g}}+2\mE_{GR}(\hat{\Sigma})=2\mE_{GR}(\Sigma)+\int_{\Sigma}P_4 \left(-\ln |\Phi|^2\right)\dvol_g.
\end{equation*}

We assume that $\Phi$ is a graph on $\mathbb{B}_R(0)\subset \bR^4$, i.e. $\Phi=(x, \phi)$, where $x\in \bR^4$, $\phi\in \bR^{n-4}$ and $\phi(0)=0$, $D\phi(0)=0$. The induced metric $g=(g_{\alpha\beta})$ is given by $\Id_{\bR^4}+D^{T}\phi\cdot D\phi$ and the outward unit normal vector on $\partial \mathbb{B}_r$, $0<r<R$, is defined by
\begin{equation*}
    \eta_r=\frac{g^{-1}e_r}{\langle g^{-1}e_r, e_r\rangle ^{\frac{1}{2}}}, \quad e_r=\frac{x}{r}.
\end{equation*}

Integration by parts yields
\begin{equation}
    \int_{\Sigma \backslash \mathbb{B}_r}P_4 \left(-\ln |\Phi|^2\right)\dvol_g=\int_{\partial \mathbb{B}_r}\left(\langle \eta_r, \nabla \Delta \left(\ln |\Phi|^2\right)\rangle+T\left(\nabla \ln |\Phi|^2, \eta_r\right)\right)dA_g.
\end{equation}
By definitions, we know that
\begin{equation*}
    L_{\alpha\beta}=(0, D^2_{\alpha\beta}\phi)^{\perp}, \quad H=\frac{1}{4}(0, g^{\alpha\beta}D^2_{\alpha\beta}\phi)^{\perp}, \quad \nabla \ln |\Phi|^2=\frac{2g^{-1}\left(x+D^{T}\phi\cdot \phi\right)}{|x|^2+|\phi|^2}.
\end{equation*}
Besides, by our assumptions, in $\mathbb{B}_r(0)$, 
\begin{equation*}
    g=\Id_{\bR^4}+O(r^2).
\end{equation*}
In particular, the induced metric on $\partial \mathbb{B}_r$ ($=\mathbb{S}^3$ in topology) converges to the canonical metric $g_c$ uniformly as $r$ goes to $0$. Therefore, we know that on $\partial \mathbb{B}_r$,
\begin{equation}\label{area}
    \eta_r=\frac{x}{r}+O(r), \quad dA_g=r^3dA_{S^3}+o(r^3), \quad \nabla \ln |\Phi|^2=\frac{2x}{r^2}+O(r).
\end{equation}
Note that $T_{\alpha\beta}=4H^{\alpha'}L_{\alpha\beta\alpha'}-6|H|^2 g_{\alpha\beta}=O(1)$. Combining them together yields
\begin{equation*}
      T\left(\nabla \ln |\Phi|^2, \eta_r\right)dA_g=O(1)r^2dA_{\mathbb{S}^3}+o(r^2),
\end{equation*}
which implies that $\int_{\partial \mathbb{B}_r}T\left(\nabla \ln |\Phi|^2, \eta_r\right)dA_g$ converges to $0$ uniformly as $r$ goes to $0$.

Besides, from the expansion of the metric, we know that 
\begin{equation*}
    \Delta \ln |\Phi|^2=\delta^{\alpha\beta}D^2_{\alpha\beta}\ln |\Phi|^2+O(1).
\end{equation*}
Direct calculation yields
\begin{equation*}
    \delta^{\alpha\beta}D^2_{\alpha\beta}\ln |\Phi|^2=2\frac{4+|D\phi|^2_{g_e}+\phi\cdot \Delta_{g_e}\phi}{|x|^2+|\phi|^2}-4\frac{|x|^2+\langle D|\phi|^2, x\rangle+|\phi|^2|D\phi|_{g_e}^2}{\left(|x|^2+|\phi|^2\right)^2}.
\end{equation*}
It's not hard to see that
\begin{equation*}
    |D\phi|^2_{g_e}=O(r^2), \quad \phi\cdot \Delta_{g_e}\phi=O(r^2), \quad \langle D|\phi|^2, x\rangle=O(r^4), \quad |\phi|^2|D\phi|_{g_e}^2=O(r^6).
\end{equation*}
Therefore, similar to above, we have
\begin{equation*}
    \int_{\partial \mathbb{B}_r}\langle \eta_r, \nabla \Delta \left(\ln |\Phi|^2\right)\rangle dA_g= -8\int_{\partial \mathbb{B}_r}\left\langle \frac{x}{r}, \frac{x}{r^4}\right\rangle r^3dA_{\mathbb{S}^3}+O(r)\to -8\Vol(\mathbb{S}^3)
\end{equation*}
as $r$ goes to $0$.
In sum, we have
\begin{equation*}
    \int_{\Sigma \backslash \mathbb{B}_r}P_4 \left(-\ln |\Phi|^2\right)\dvol_g\to -8\Vol(\mathbb{S}^3).
\end{equation*}
Similarly, integration by parts yields
\begin{equation*}
    -\int_{\Sigma \backslash \mathbb{B}_r}\hat{\Delta}|\hat{H}|^2\dvol_{\hat{g}}=\int_{\partial \mathbb{B}_r} \langle \hat{\eta}, \hat{\nabla} |\hat{H}|^2\rangle_{\hat{g}}dA_{\hat{g}}.
\end{equation*}
By conformal transformation law, we know that
\begin{equation*}
    |\hat{H}|^2_{\hat{g}}=|\Phi|^4\left(H+8\frac{\Phi^{\perp}}{|\Phi|^2}\right)^2,
\end{equation*}
where $\Phi^{\perp}$ is the normal part of $\Phi$.
Note that 
\begin{equation*}
    H+8\frac{\Phi^{\perp}}{|\Phi|^2}=O(1).
\end{equation*}
Therefore, we have
\begin{equation*}
\begin{split}
    \langle \hat{\eta}, \hat{\nabla} |\hat{H}|^2\rangle_{\hat{g}}dA_{\hat{g}}&=\eta(|\hat{H}|^2)|\Phi|^2\cdot |\Phi|^{-6}dA_{g}\\
   &= \eta \left(H+8\frac{\Phi^{\perp}}{|\Phi|^2}\right)^2 dA_g\\
    &+\left(H+8\frac{\Phi^{\perp}}{|\Phi|^2}\right)^2\langle \eta, \nabla |\Phi|^4\rangle_g |\Phi|^{-4}dA_g 
    \end{split}
\end{equation*}
Since 
\begin{equation*}
    \langle \eta, \nabla |\Phi|^4\rangle_g |\Phi|^{-4}r^3=O(r),
\end{equation*}
we conclude that 
\begin{equation*}
    \int_{\Sigma \backslash \mathbb{B}_r}\hat{\Delta}|\hat{H}|^2 \dvol_{\hat{g}}\to 0
\end{equation*}
as $r$ goes to $0$.

Note that $\Vol(\mathbb{S}^3)=2\pi^2$ and $\Vol(\mathbb{S}^4)=\frac{8}{3}\pi^2$. Finally, we have
\begin{equation*}
\begin{split}
    \mE_{GR}(\hat{\Sigma})&=\mE_{GR}(\Sigma)-3\Vol(\mathbb{S}^4)\cdot \mathrm{card} \Phi^{-1}(0)\\
    &=\mE_{GR}(\Sigma)-\mE_{GR}(\mathbb{S}^4)\cdot \mathrm{card} \Phi^{-1}(0).
    \end{split}
\end{equation*}
Since $|\mathring{L}|^4$, $|\mathring{L}^2|^2$ are two pointwise conformal invariants, we can conclude that the loss of the energy under the inversion is independent of $\int |\mathring{L}|^4$ and $\int |\mathring{L}^2|^2$. Hence,
\begin{equation*}
    \mE^{(\mu, \nu)}(\hat{\Phi})=\mE^{(\mu, \nu)}(\Phi)-\mE^{(\mu, \nu)}(\mathbb{S}^4)\cdot \mathrm{card} \Phi^{-1}(0).
\end{equation*}
\end{proof}

\subsection{Geometric interpretation of (\ref{loss of energy})}
In this subsection, we explain a little bit about the geometric picture behind the identity \ref{loss of energy}. Recall that on $(\Sigma^4, g)$, the Branson's $\mQ_4$ curvature of $g$ is defined as
\begin{equation*}
    \mQ_4:=\frac{1}{6}\left\{-\Delta R_g+\frac{1}{4}R^2_g-3|\mathring{\Ric}|_g^2\right\},
\end{equation*}
where $\mathring{\Ric}$ is the traceless part of $\Ric$. It is well known that the $\mQ_4$ curvature is an integral conformal invariant associated to the fourth order intrinsic Paneitz operator $\mP_4$
\begin{equation*}
    \mP_4:=\Delta^2+\nabla^{\alpha}\left(\left(\frac{2}{3}R g_{\alpha\beta}-2\Ric_{\alpha\beta}\right)\nabla^{\beta}\right).
\end{equation*}
Moreover, under the conformal change $\hat{g}=e^{2\omega}g$, $\mP_4$ satisfies
\begin{equation*}
e^{4\omega}\hat{\mQ}_4=\mQ_4+\mP_4\left(\omega\right).
\end{equation*}
By Gauss--Codazzi equations, we have
\begin{equation*}
    \begin{split}
        \Ric_{\alpha\beta}=-L^{\alpha'}_{\alpha\gamma}L^{\gamma}_{\beta\alpha'}&+4H^{\alpha'}L_{\alpha\beta\alpha'}=-\mathring{L}^{\alpha'}_{\alpha\gamma}\mathring{L}^{\gamma}_{\beta\alpha'}+2H^{\alpha'}\mathring{L}_{\alpha\beta\alpha'}+3|H|^2g_{\alpha\beta},\\
        R&=16|H|^2-|L|^2=12|H|^2-|\mathring{L}|^2.
    \end{split}
\end{equation*}
Therefore, 
\begin{equation*}
    \frac{2}{3}R g_{\alpha\beta}-2\Ric_{\alpha\beta}=2L^{\alpha'}_{\alpha\gamma}L^{\gamma}_{\beta\alpha'}-8H^{\alpha'}L_{\alpha\beta\alpha'}+\frac{2}{3} \left(16|H|^2-|L|^2\right)g_{\alpha\beta}.
\end{equation*}
Note that the extrinsic Paneitz operator $P_4$ and the intrinsic Paneitz operator $\mP_4$ have the same leading term $\Delta^2$. By \cite[Theorem 5.3]{Case-Graham-Kuo}, the difference $P_4-\mP_4$ is given by
\begin{equation*}
    P_4-\mP_4=\nabla^{\alpha}\left(S_{\alpha\beta}\nabla^{\beta}\right),
\end{equation*}
where 
\begin{equation*}
    \begin{split}
        S_{\alpha\beta}&=T_{\alpha\beta}-\frac{2}{3}R g_{\alpha\beta}+2\Ric_{\alpha\beta}\\
        &=12H^{\alpha'}L_{\alpha\beta\alpha'}-2L^{\alpha'}_{\alpha\gamma}L^{\gamma}_{\beta\alpha'}-\frac{2}{3} \left(25|H|^2-|L|^2\right)g_{\alpha\beta},
    \end{split}
\end{equation*}
and $Q_4-\mQ_4$ satisfies
\begin{equation*}
    e^{4\omega}\left(\hat{Q}_4-\hat{\mQ}_4\right)=\left({Q}_4-{\mQ}_4\right)+ \nabla^{\alpha}\left(S_{\alpha\beta}\nabla^{\beta}\omega\right).
\end{equation*}
Direct calculation shows that
\begin{equation*}
\begin{split}
    \frac{1}{2}\int_{\Sigma}\left({Q}_4-{\mQ}_4\right)\dvol_g=&\int_{\Sigma} \left(\big|\nabla_{\alpha}H_{\alpha'}\big|^2+\frac{1}{2}|H|^2|\mathring{L}|^2-H^{\alpha'}\tr_g\left(\mathring{L}^3_{\alpha'}\right)\right)\dvol_g\\
    &-\frac{1}{12}\int_{\Sigma}|\mathring{L}|^4 \dvol_g+\frac{1}{4}\int_{\Sigma}\tr_g \mathring{L}^4\dvol_g.
    \end{split}
\end{equation*}

From the proof of Theorem \ref{Main 2} and above discussion, we can conclude that the functional
\begin{equation}\label{new-conformal-invariant}
    \int_{\Sigma} \left(\big|\nabla_{\alpha}H_{\alpha'}\big|^2+\frac{1}{2}|H|^2|\mathring{L}|^2-H^{\alpha'}\tr_g\left(\mathring{L}^3_{\alpha'}\right)\right)\dvol_g
\end{equation}
is invariant under translations, rotations, dilations and inversions on $\bR^n$.

Moreover, recall that by the Gauss--Bonnet--Chern formula on closed four dimensional manifolds, we have
\begin{equation*}
    \int_{\Sigma}\mQ_4 \dvol_g=8\pi^2\chi(\Sigma^4)-\int_{\Sigma}\frac{|W|^2_g}{4}\dvol_g.
\end{equation*}
Combining
\begin{equation*}
    \int_{\Sigma} Q_4\dvol=-2\int_{\Sigma}\Delta |H|^2\dvol+2\mE_{GR}=2\mE_{GR},
\end{equation*} 
yields
\begin{equation*}
    \mE_{GR}=4\pi^2\chi(\Sigma^4)+\frac{1}{2}\int_{\Sigma}\left({Q}_4-{\mQ}_4\right)\dvol_g-\int_{\Sigma}\frac{|W|^2_g}{8}\dvol_g.
\end{equation*}
We already know that 
\begin{equation*}
    \frac{1}{2}\int_{\Sigma}\left({Q}_4-{\mQ}_4\right)\dvol_g-\int_{\Sigma}\frac{|W|^2_g}{8}\dvol_g
\end{equation*}
is invariant under inversions. It implies that the loss of the energy illustrated in Theorem \ref{Main 2} is totally due to the change of the Euler characteristic. More precisely, we consider the long exact sequence of the relative homology for the pair $(\Sigma^4, \Sigma^4\backslash \Phi^{-1}(0))$. By the excision theorem, we know that
\begin{equation*}
    H_k (\Sigma^4, \Sigma^4\backslash \Phi^{-1}(0))\cong \left(H_{k-1}(S^3)\right)^{\otimes m}=\left\{
    \begin{array}{cc}
        \bZ^m & k=4,  \\
        0 & ~~\mbox{otherwise},
    \end{array}
    \right. 
\end{equation*}
where $m=\mathrm{card} \Phi^{-1}(0)$. Therefore, 
\begin{equation*}
    \chi(\Sigma^4\backslash \Phi^{-1}(0)))=\chi(\Sigma^4)-m.
\end{equation*}

The Gauss--Bonnet--Chern formula (\ref{complete-GBC}) in Lemma \ref{Gauss--Bonnet--Chern--formula} yields the desired. In conclusion, we have

\begin{equation*}
\begin{split}
    \mE_{GR}(\Sigma^4\backslash G^{-1}(0))=&\frac{1}{2}\int_{\Sigma^4\backslash G^{-1}(0)} Q_4\dvol\\
    =&\frac{1}{2}\int_{\Sigma^4\backslash G^{-1}(0)}\left({Q}_4-{\mQ}_4\right)\dvol+\frac{1}{2}\int_{\Sigma^4\backslash G^{-1}(0)}{\mQ}_4\dvol\\
    =&\frac{1}{2}\int_{\Sigma^4}\left({Q}_4-{\mQ}_4\right)\dvol-\int_{\Sigma^4}\frac{|W|^2_h}{8}\dvol\\
    &+4\pi^2\left(\chi(\Sigma^4)-2m\right)\\
    =&\frac{1}{2}\int_{\Sigma^4}\left({Q}_4-{\mQ}_4\right)\dvol+\frac{1}{2}\int_{\Sigma^4}{\mQ}_4\dvol-8\pi^2m\\
    =&\mE_{GR}(\Sigma^4)-8\pi^2 m.
    \end{split}
\end{equation*}

\begin{remark}
In the hypersurface case, there is an alternative way to verify the invariance of (\ref{new-conformal-invariant}) under inversions. Note that for ${\Sigma^4\hookrightarrow \bR^5}$, we have
\begin{equation*}
    \det L=H^4-\frac{1}{2}H^2|\mathring{L}|^2_g+\frac{1}{3}H\left(\tr_g \mathring{L}^3\right)+\det \mathring{L},
\end{equation*}
where we use the relation
\begin{equation*}
    \det \mathring{L}=\frac{1}{8}|\mathring{L}|^4_g-\frac{1}{4}\left(\tr_g \mathring{L}^4\right).
\end{equation*}
Therefore,
\begin{equation}\label{decomposition of energy}
\begin{split}
    \mE_{GR}&=3\int_{\Sigma}\det L \dvol_g-3\int_{\Sigma}\det \mathring{L} \dvol_g\\
    &+\int_{\Sigma} \left(\big|\nabla H\big|^2+\frac{1}{2}\big|H|^2|\mathring{L}\big|^2-H\left(\tr_g \mathring{L}^3\right)\right)\dvol_g.
    \end{split}
\end{equation}
Besides, on closed manifolds, we have
\begin{equation}\label{CGB}
    \frac{4\pi^2}{3}\chi(\Sigma)=\int_{\Sigma}\det L \dvol_g.
\end{equation}

Next, we consider the conformal Gauss map $Y_{\Phi}$ of $\Phi$ which induces a new metric $\kg$ on $\Sigma$ given by
\begin{equation*}
    \kg_{ij}:=\mathring{L}_i^{\;\;k}\mathring{L}_{kj}.
\end{equation*}
Here we need to emphasize that $\mathring{L}_{ij}$ in above is given by $\langle \mathring{L}_{ij}, N\rangle $ for a given unit normal vector $N$ with respect to $g$ (see \cite[(26) and Lemma 3.1]{Martino23} for precise definition).
In particular, under the conformal change $\hat{g}=e^{2\phi}g$, $\kg$ is invariant because by conformal transformation laws,
\begin{equation*}
    \hat{\kg}_{ij}=\hat{\mathring{L}}_{ik}\hat{\mathring{L}}_{lj}\hat{g}^{kl}=e^{w}e^{w}e^{-2w} \kg_{ij}=\kg_{ij}.
\end{equation*}
Furthermore, if we assume that $\mathring{L}_{ij}\neq 0$ on $\Sigma$ and $\det \mathring{L}>0$, by \cite[Prop Theorem II]{Martino23}, we know that
\begin{equation*}
\begin{split}
    &\int_{\Sigma} \left(\big|\nabla H\big|^2+\frac{1}{2}\big|H|^2|\mathring{L}\big|^2-H\left(\tr_g \mathring{L}^3\right)\right)\dvol_g-3\int_{\Sigma}\det \mathring{L} \dvol_g\\
    =&3\Vol_{\kg}(\Sigma)-\frac{1}{2}\int_{\Sigma} R_{\kg}\dvol_{\kg},
    \end{split}
\end{equation*}
where we use the relation $\int_{\Sigma}\det \mathring{L} \dvol_g=\Vol_{\kg}(\Sigma)$. The desired invariance follows easily.
\end{remark}

The following GBC formula is a special case of the main result in \cite[Theorem 1.5]{Li-Wang24}. For self-completeness, we sketch the proof in the same spirit of \cite{Chang-Qing-Yang00}.

\begin{lemma}[GBC formula on $\hat{\Sigma}$]\label{Gauss--Bonnet--Chern--formula}
    For the manifold $\hat{\Sigma}=\Sigma\backslash \Phi^{-1}\{0\}$ immersed in $\bR^n$ given by $I\circ \Phi: \hat{\Sigma}\hookrightarrow \bR^n$ with induced metric $\hat{g}=\frac{1}{|\Phi|^4}g$, we have
    \begin{equation}\label{complete-GBC}
    \int_{\Sigma}\mQ_4 \dvol_g=8\pi^2\left(\chi(\hat{\Sigma})-m\right)-\int_{\Sigma}\frac{|W|^2_g}{4}\dvol_g.
\end{equation}
\end{lemma}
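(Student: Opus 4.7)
My plan is to prove the identity in the spirit of Chang--Qing--Yang \cite{Chang-Qing-Yang00}, by applying the conformal transformation law for Branson's intrinsic Paneitz operator $\mP_4$ with the conformal factor $\omega := -\ln|\Phi|^2$ (so that $\hat g = e^{2\omega}g = |\Phi|^{-4}g$) and extracting the residue contribution at each puncture $p_i \in \Phi^{-1}(0) = \{p_1,\dots,p_m\}$. I interpret the left-hand side of \eqref{complete-GBC} as $\int_{\hat\Sigma}\hat\mQ_4\,\dvol_{\hat g}$, consistent with the title ``GBC formula on $\hat\Sigma$'' and with the author's computation immediately preceding the lemma, where $\int_{\Sigma^4\setminus G^{-1}(0)} Q_4\,\dvol$ is manipulated using the inverted geometry.

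In dimension four, $\mP_4(1) = 0$ and the conformal transformation law for $\mQ_4$ reads
\begin{equation*}
e^{4\omega}\hat\mQ_4 = \mQ_4 + \mP_4(\omega),
\end{equation*}
so that $\hat\mQ_4\,\dvol_{\hat g} = (\mQ_4 + \mP_4\omega)\,\dvol_g$. Integrating over the excised region $U_r := \Sigma \setminus \bigcup_i B_r(p_i)$ and using $\mP_4(1) = 0$, two integrations by parts leave only boundary contributions from the $\mP_4\omega$ piece, producing
\begin{equation*}
\int_{U_r} \hat\mQ_4\,\dvol_{\hat g} = \int_{U_r} \mQ_4\,\dvol_g - \sum_{i=1}^m \int_{\partial B_r(p_i)} \bigl(\eta_r\cdot\nabla\Delta\omega + T'(\nabla\omega,\eta_r)\bigr)\,dA_g,
\end{equation*}
where $T'_{\alpha\beta} = \tfrac{2}{3}R\,g_{\alpha\beta} - 2\Ric_{\alpha\beta}$ is the divergence-form symbol of $\mP_4$; the sign comes from the outward normal of $U_r$ being $-\eta_r$ on $\partial B_r(p_i)$.

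The crux, and the main obstacle, is the residue analysis at each puncture, which is directly parallel to the one carried out for the extrinsic $P_4$ in the proof of Theorem \ref{Main 2}. Using a local graph parametrization $\Phi = (x,\phi(x))$ near $p_i$ with $\phi(0) = 0$ and $D\phi(0) = 0$, the expansions
\begin{equation*}
g = \mathrm{Id} + O(r^2), \qquad \nabla\ln|\Phi|^2 = \tfrac{2x}{|x|^2} + O(r), \qquad \Delta\ln|\Phi|^2 = \tfrac{4}{|x|^2} + O(1),
\end{equation*}
together with $dA_g = r^3\,dA_{\bS^3} + o(r^3)$ on $\partial B_r$, give exactly as in the proof of Theorem \ref{Main 2}
\begin{equation*}
\int_{\partial B_r(p_i)} \eta_r\cdot\nabla\Delta\bigl(\ln|\Phi|^2\bigr)\,dA_g \longrightarrow -8\,\Vol(\bS^3) = -16\pi^2, \qquad r\to 0.
\end{equation*}
Since $\omega = -\ln|\Phi|^2$, this yields $\int_{\partial B_r}\eta_r\cdot\nabla\Delta\omega\,dA_g \to 16\pi^2$. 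For the $T'$-term, boundedness of $T'_{\alpha\beta}$ combined with $|\nabla\omega| = O(r^{-1})$ and $dA_g = O(r^3)\,dA_{\bS^3}$ yields $\int_{\partial B_r}T'(\nabla\omega,\eta_r)\,dA_g = O(r^2) \to 0$. Summing over the $m$ punctures produces
\begin{equation*}
\lim_{r\to 0}\int_{U_r}\mP_4(\omega)\,\dvol_g = -16\pi^2 m.
\end{equation*}
The bookkeeping -- the sign, the coefficient $16\pi^2$, and the fact that intrinsic curvature contributions through $T'$ as well as all graph-error terms drop out -- is where I would expend the most effort.

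Combining with the closed Gauss--Bonnet--Chern identity on $(\Sigma,g)$,
\begin{equation*}
\int_\Sigma \mQ_4\,\dvol_g = 8\pi^2\chi(\Sigma) - \int_\Sigma \frac{|W|^2_g}{4}\,\dvol_g,
\end{equation*}
the topological identity $\chi(\Sigma) = \chi(\hat\Sigma) + m$ (derived via excision on $(\Sigma,\hat\Sigma)$ just above the lemma), and the conformal invariance $\int_{\hat\Sigma}|\hat W|^2_{\hat g}\,\dvol_{\hat g} = \int_\Sigma |W|^2_g\,\dvol_g$ of $|W|^2\dvol$ in dimension four, sending $r \to 0$ gives
\begin{equation*}
\int_{\hat\Sigma}\hat\mQ_4\,\dvol_{\hat g} = 8\pi^2\chi(\Sigma) - 16\pi^2 m - \int_\Sigma \frac{|W|^2_g}{4}\,\dvol_g = 8\pi^2\bigl(\chi(\hat\Sigma) - m\bigr) - \int_\Sigma \frac{|W|^2_g}{4}\,\dvol_g,
\end{equation*}
which is exactly \eqref{complete-GBC}.
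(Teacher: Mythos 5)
Your proposal is correct, but it reaches \eqref{complete-GBC} by a genuinely different route than the paper. You correctly read the left-hand side as $\int_{\hat\Sigma}\hat{\mQ}_4\,\dvol_{\hat g}$, and your argument stays entirely on the compact $(\Sigma,g)$: apply the intrinsic transformation law $e^{4\omega}\hat{\mQ}_4=\mQ_4+\mP_4\omega$ with $\omega=-\ln|\Phi|^2$, integrate over $\Sigma\setminus\bigcup_iB_r(p_i)$, and evaluate the puncture residues by the same boundary computation used in the paper's proof of Theorem \ref{Main 2} (only the leading $\nabla\Delta\omega$ term survives, each puncture contributing $-8\Vol(\bS^3)=-16\pi^2$, while the bounded curvature coefficient $\tfrac23Rg-2\Ric$ paired with $\nabla\omega=O(r^{-1})$ and $dA_g=O(r^3)$ dies), then conclude with the closed Gauss--Bonnet--Chern identity and $\chi(\Sigma)=\chi(\hat\Sigma)+m$; the sign bookkeeping and the final algebra $8\pi^2\chi(\Sigma)-16\pi^2m=8\pi^2(\chi(\hat\Sigma)-m)$ all check out, and the result is consistent with how the lemma is used afterwards. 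The paper instead works on the complete noncompact side, following Chang--Qing--Yang \cite{Chang-Qing-Yang00}: near each puncture it writes $\hat g$ through an asymptotically radial conformal factor $v$, invokes the formula expressing the deficit as $m\Vol(\bS^3)(-v'''+4v')$, and uses the asymptotics $v\sim t$ (so $c_2=c_3=0$ and $v'\to1$) to extract the $8\pi^2$ per end. Your approach is more self-contained and elementary --- it avoids the structure theory of complete metrics with integrable $\mQ$-curvature and the discussion of the coefficients $c_2,c_3$, and it recycles a residue already established in Theorem \ref{Main 2}; the paper's approach, in exchange, exhibits the correction term geometrically as the volume-growth/isoperimetric deficit of each end and places the lemma inside the general GBC framework for complete four-manifolds. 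The only minor points you leave implicit are the absolute integrability of $\hat{\mQ}_4\,\dvol_{\hat g}$ on the ends (needed to identify the limit of the truncated integrals with the integral over $\hat\Sigma$, and asserted in the paper) and the fact that only one divergence step, not two, is actually needed since $\mP_4\omega$ is already in divergence form; neither affects correctness.
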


\begin{proof}
    The proof follows from \cite[Section 2]{Chang-Qing-Yang00}. We assume that $\Phi$ is a graph on $B_R(0)\subset \bR^4$, i.e. $\Phi=(x, \phi)$, where $x\in \bR^4$, $\phi\in \bR^{n-4}$ and $\phi(0)=0$, $D\phi(0)=0$. By our assumptions, in $B_{r}(0)$, the metric is
\begin{equation*}
    g=\Id_{\bR^4}+O(r^2).
\end{equation*}
Therefore, around each point $p_i\in \Phi^{-1}(0)$, we can find a neighborhood $U_i\subset \hat{\Sigma}$ corresponding to $B_r(0)\backslash \{0\}$ with the metric $\frac{g}{|\Phi|^4}$. We define $v=\omega+s$, where $\omega=-\ln |\Phi|^2$ and $s=\ln r$ for $r=|x|$, $x\in B_r(0)$. Hence, $v$ is a radial function asymptotically. Moreover, non-radial part in the metric $\frac{g}{|\Phi|^4}$ has higher order, which has no contribution on the isoperimetric constant. For convenience, we ignore these higher order terms and consider the radial contribution from $v$ merely. Without loss of generality, we may assume that $v$ is a radial function in $B_r(0)$.

We set $t=-s$ so that as $s\to 0^{+}$, $t\to 0^{-}$. Similar to \cite[(2.16)]{Chang-Qing-Yang00}, we can obtain
\begin{equation}
    8\pi^2\chi(\hat{\Sigma})-\int_{\Sigma}\frac{|W|^2_g}{4}\dvol_g-\int_{\Sigma\backslash \Phi^{-1}(B_r(0))}\mQ_4 \dvol_g=m\cdot \Vol(S^3)\left(-v^{'''}(t)+4v^{'}(t)\right),
\end{equation}
where $'=\frac{d}{dt}$.

First, from above discussion, it is easy to see that $\mQ_4$ is absolutely integrable on $\hat{\Sigma}$. One crucial step in \cite[Section 2]{Chang-Qing-Yang00} is to show that the globally defined radial function $v$ on $\bR^4$ there has the form of
\begin{equation*}
    v(t)=c_0+c_1t+c_2e^{-2t}+c_3e^{3t}+f(t),
\end{equation*}
for some constants $c_1$, $c_2$, $c_3$, $c_4$ and a function $f(t)$ depending on $\int_{\hat{\Sigma}}|\mQ_4|\dvol$. On other hand, the absolute integrability of $\mQ_4$ ensures that $c_2=0$ (see \cite[Lemma 2.3]{Chang-Qing-Yang00} for more details) and the additional assumption on the non-negative scalar curvature ensures that $c_3=0$ (see \cite[Lemma 2.5]{Chang-Qing-Yang00}), i.e we can conclude that
\begin{equation*}
    \lim_{t\to \infty}v^{''}(t)=\lim_{t\to \infty}v^{'''}(t)=0.
\end{equation*}
In our setting, since $v=ct$ asymptotically, we can immediately conclude that $c_2=c_3=0$ and
\begin{equation*}
    \lim_{t\to \infty} v^{'}(t)=\frac{1}{8\pi^2\cdot m}\left(8\pi^2\chi(\hat{\Sigma})-\int_{\Sigma}\frac{|W|^2_g}{4}\dvol_g-\int_{\hat{\Sigma}}\mQ_4 \dvol_g\right).
\end{equation*}
By the completeness of the metric $\frac{g}{|\Phi|^4}$, we conclude that $\lim_{t\to \infty} v^{'}(t)\geq 0$. As $g$ converges to the standard metric uniformly as $r$ goes to zero, the desired Gauss--Bonnet--Chern formula (\ref{complete-GBC}) follows immediately.
\end{proof}

\section{Expansion at infinity}\label{expansion-at-infinity}
In this section, we are going to establish the expansion of such inverted hypersurface near infinity. We first prove an approximation estimates for Willmore energy by the triharmonic energy.
\begin{lemma}\label{technique--lemma}
    Let $\Phi: \bB_R(0) \to \mathbb{R}^n$ be a four dimensional graph on $\bB_R(0)\subset \bR^4$, i.e. $\Phi=(x, \phi)$, where $x\in \bR^4$, $\phi\in \bR^{n-4}$ with induced metric $G=(g_{\alpha\beta})_{1 \leqslant \alpha,\beta \leqslant 4}$ and the averaged mean curvature vector $H$. If $|D \phi| \leqslant 1$ on $B_R$, then for a universal constant $C < \infty$, such that 
    \begin{equation}
       \left| |\nabla^\perp H|^2 \sqrt{\det G}-\frac{1}{16}|\nabla \Delta \phi|^2\right| \leqslant C \left(|D \phi|^2 |D^3 \phi|^2+|D^2\phi|^4\right).
    \end{equation}
\end{lemma}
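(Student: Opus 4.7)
The plan is to expand every quantity appearing in $|\nabla^\perp H|^2 \sqrt{\det G}$ as a power series in $D\phi$ and its derivatives, isolate the leading term $\tfrac{1}{16}|\nabla\Delta\phi|^2$, and bookkeep every correction so that each error factor carries at least one power of $D\phi$. The key setup is as follows. Write $T_\alpha = (e_\alpha, \partial_\alpha\phi)$, so that $g_{\alpha\beta} = \delta_{\alpha\beta} + \partial_\alpha\phi\cdot\partial_\beta\phi$. Under $|D\phi|\leq 1$ the matrix $g$ has eigenvalues in $[1,2]$, so a Neumann series argument yields uniform bounds $g^{\alpha\beta} = \delta^{\alpha\beta}+O(|D\phi|^2)$ and $\sqrt{\det G}=1+O(|D\phi|^2)$. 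An orthonormal normal frame $\{N_{\alpha'}\}$ is produced by Gram--Schmidt applied to the ``tilted'' basis $\nu_{\alpha'}=(-D\phi\cdot e_{\alpha'},\,e_{\alpha'})$, which one checks satisfies $\langle\nu_{\alpha'},T_\beta\rangle=0$; the result is $N_{\alpha'}=\nu_{\alpha'}+O(|D\phi|^2)$.

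Next I would expand the extrinsic quantities. Since $\nabla_\alpha T_\beta = (0,\partial^2_{\alpha\beta}\phi)$, the formula for $N^{\alpha'}$ yields
\begin{equation*}
L^{\alpha'}_{\alpha\beta}=\partial^2_{\alpha\beta}\phi^{\alpha'}+O\bigl(|D\phi|^2|D^2\phi|\bigr),\qquad H^{\alpha'}=\tfrac{1}{4}\Delta\phi^{\alpha'}+O\bigl(|D\phi|^2|D^2\phi|\bigr).
\end{equation*}
The normal connection $1$-form $\omega_\gamma{}^{\beta'}{}_{\alpha'}=\langle \partial_\gamma N_{\alpha'},N^{\beta'}\rangle$ is of order $|D\phi|\,|D^2\phi|$: to leading order $\partial_\gamma\nu_{\alpha'}$ is the purely tangential vector $(-\partial_\gamma\partial\phi\cdot e_{\alpha'},0)\in\mathbb{R}^4\oplus\{0\}$, and its component along $N^{\beta'}$ picks up one factor of $D\phi$ from the tilt of the normal space. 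Consequently
\begin{equation*}
(\nabla^\perp_\gamma H)^{\alpha'}=\partial_\gamma H^{\alpha'}-\omega_\gamma{}^{\alpha'}{}_{\beta'}H^{\beta'}=\tfrac{1}{4}\partial_\gamma\Delta\phi^{\alpha'}+E_\gamma^{\alpha'},
\end{equation*}
with $|E_\gamma|\leq C\bigl(|D\phi|\,|D^2\phi|^2+|D\phi|^2|D^3\phi|\bigr)$.

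Finally I would assemble. Contracting with $g^{\gamma\delta}$ and multiplying by $\sqrt{\det G}$ gives $|\nabla^\perp H|^2\sqrt{\det G}=\tfrac{1}{16}|\nabla\Delta\phi|^2+R$, where $R$ collects three families of terms. The metric corrections contribute $O(|D\phi|^2)\cdot|D^3\phi|^2\leq C|D\phi|^2|D^3\phi|^2$. The diagonal $|E|^2$ term is bounded by $C(|D\phi|^2|D^2\phi|^4+|D\phi|^4|D^3\phi|^2)\leq C(|D^2\phi|^4+|D\phi|^2|D^3\phi|^2)$ using $|D\phi|\leq 1$. The only cross term requiring care, $|D^3\phi|\cdot|D\phi|\cdot|D^2\phi|^2$, is handled by AM--GM:
\begin{equation*}
|D\phi|\,|D^3\phi|\cdot|D^2\phi|^2\leq\tfrac{1}{2}\bigl(|D\phi|^2|D^3\phi|^2+|D^2\phi|^4\bigr),
\end{equation*}
which closes the estimate. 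The computation itself is not deep; the only real subtlety, and the place where I expect to have to be most careful, is to ensure that every error carries at least one $D\phi$ factor, so that the final bound does not pick up orphan terms such as $|D^3\phi|^2$ or $|D^2\phi|^2|D^3\phi|$ that the right-hand side does not control. This forces the specific choice of normal frame above and relies on the principle that each tangential-to-normal projection costs one power of $D\phi$.
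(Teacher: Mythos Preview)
Your proposal is correct and follows essentially the same strategy as the paper: construct a tilted normal frame (the paper uses QR factorization, you use Gram--Schmidt on the same tilted basis $\nu_{\alpha'}$), show the normal connection form is $O(|D\phi||D^2\phi|)$, expand $H$ and $\nabla^\perp H$ with the requisite error bounds, and close with AM--GM on the cross term $|D\phi||D^3\phi||D^2\phi|^2$. The only organizational difference is that the paper splits $H=H^{\text{hor}}+H^{\text{ver}}\in\mathbb{R}^4\oplus\mathbb{R}^{n-4}$ in the fixed ambient decomposition and estimates $\nabla H^{\text{hor}}$ and $\nabla H^{\text{ver}}-\tfrac14\nabla\Delta\phi$ separately, whereas you work directly with the frame components $H^{\alpha'}$; the resulting error terms and final assembly are the same.
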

\begin{proof}
Without loss of generality, we may assume that $\Phi$ satisfies $\phi(0)=0$, $D\phi(0)=0$. For clarity, we define
\begin{equation*}
\begin{split}
    g_{\top}=\Id_{4\times 4}+D^{T}\phi\cdot D\phi,\quad &g_{\bot}=\Id_{(n-4)\times (n-4)}+D\phi \cdot D^{T}\phi,\\
    g^{-1}_{\top}=\Id_{4\times 4}-D^{T}\phi\cdot g_{\bot}^{-1}\cdot D\phi, \quad &g_{\bot}^{-1}=\Id_{(n-4)\times (n-4)}-D\phi \cdot g^{-1}_{\top}\cdot D^{T}\phi.
    \end{split}
\end{equation*}
Note that $g_{\top}$ and $g^{-1}_{\top}$ are the induced metric $g$ and its inverse $g^{-1}$, respectively.

By QR factorization, there exist an $n\times (n-4)$ matrix $Q$ and an $(n-4)\times (n-4)$ matrix $R$ such that 
\begin{equation*}
    \begin{bmatrix}
        \Id \\
        D^{T}\phi
    \end{bmatrix}=Q\cdot R,
\end{equation*}
where $Q$ consists of orthonormal column vectors and $R$ is an upper-triangular matrix from Gram--Schmidt process. Moreover, we have 
\begin{equation*}
    g_{\bot}=\begin{bmatrix}
        \Id D\phi
    \end{bmatrix} \cdot
    \begin{bmatrix}
        \Id \\
        D^{T}\phi
    \end{bmatrix}=R^{T}\cdot Q^{T}\cdot Q\cdot R=R^{T}\cdot R,
\end{equation*}
and $\{w_{\alpha'}\}_{\alpha'=1}^{n-4}$ are orthonormal vectors with respect to $g_{\bot}$ where $\{w_{\alpha'}\}_{\alpha'=1}^{n-4}$ are column vectors of $R^{-1}$ which is denoted by $g_{\bot}^{-\frac{1}{2}}$ formally. Therefore, 
\begin{equation*}
    \boldsymbol{n}_{\alpha'}:=\left[-D^{T}\phi\cdot w_{\alpha'}, w_{\alpha'}\right], \quad \alpha'=1, \cdots, n-4
\end{equation*}
form an orthonormal basis of the normal bundle. By definition, we know that $w_{\alpha'}$ and $\boldsymbol{n}_{\alpha'}$ are only involving the first derivative of $\phi$. 

By the standard calculation, the averaged mean curvature vector are given by 
\begin{equation}\label{H on graph}
    H=\frac{1}{4} \tr_{G}\left(\pi_{n} D^2 \Phi\right),
\end{equation}
where $\pi_{n}=\Id_{\bR^n}-D \Phi G^{-1} D\Phi^{T}$ is the projection onto the normal bundle.

For any $ 1\leqslant \alpha \leqslant 4$ and $1 \leqslant \alpha', \beta' \leqslant n-4$,
\begin{equation}\label{perp nabla H}
    (\nabla_{\alpha}^\perp H)^{\beta'}=\nabla_{\alpha} H^{\beta'} -\Gamma_{\alpha\alpha'}^{\beta'} H^{\alpha'},
\end{equation}
where Christoffel symbol $\Gamma_{\alpha\alpha'}^{\beta'}$ is given by $\langle \nabla_{\alpha} n_{\alpha'}, n_{\beta'}\rangle$. From the definition of $Q$, we know that
\begin{equation}
    |\Gamma_{\alpha\alpha'}^{\beta'}|\leqslant C|D^2\phi||D\phi|.
\end{equation}

Besides, we can decompose $H=H^{\text{hor}}+H^{\text{ver}}\in \mathbb{R}^4 \oplus \mathbb{R}^{n-4}$ and obtain from \ref{H on graph} the equations
\begin{equation}\label{H hor}
   H^{\text{hor}}=-\frac{1}{4}g^{\alpha \beta}G^{-1}D \phi^{T} \partial_{\alpha \beta}^2 \phi,
\end{equation}
\begin{equation}\label{H ver}
    H^{\text{ver}}=\frac{1}{4}g^{\alpha \beta}(\Id- D \phi G^{-1} D \phi^{T})\cdot \partial_{\alpha \beta}^{2} \phi.
\end{equation}
On one hand, by \ref{H hor}, we know that
\begin{equation}
    |\nabla H^{\text{hor}}| \leqslant C(|D \phi||D^3 \phi|+|D^2 \phi|^2),
\end{equation}
On the other hand, by \ref{H ver}, we have 
\begin{align}
    |\nabla H^{\text{ver}}-\nabla \frac{1}{4}\Delta \phi| &\leqslant \left|(g^{\alpha\beta}-\delta_{\alpha\beta})\cdot\partial_{\alpha\beta} \nabla \phi\right|+C |D\phi| \cdot \left|D^2 \phi\right|^2\\
    &\leqslant C\left(|D \phi|^2 |D^3 \phi|+|D\phi||D^2 \phi|^2\right).
\end{align}
We also have the following identity
\begin{align}
  |\nabla^\perp H|^2 \sqrt{\det G}-\frac{1}{16}|\nabla \Delta\phi|^2&=|\nabla^\perp H^{\text{ver}}-\frac{1}{4}\nabla \Delta \phi|^2+\frac{1}{2}\langle\nabla\Delta \phi, \nabla^\perp H^{\text{ver}}-\frac{1}{4}\nabla\Delta\phi\rangle\\
  &+|\nabla^\perp H^{\text{ver}}|^2 (\sqrt{\det{G}}-1)+|\nabla^\perp H^{\text{hor}}|^2 \sqrt{\det{G}}.
\end{align}
Hence, with the fact that $\sqrt{\det G}\leqslant 1+|D\phi|^2$, we have 
\begin{equation*}
\begin{split}
  \left||\nabla^\perp H|^2 \sqrt{\det G}-\frac{1}{16}|\nabla \Delta\phi|^2\right| &\leqslant C(|D\phi|^2|D^3\phi|+|D\phi||D^2\phi|^2)^2 \\
  &+C(|D^3\phi|^2|D\phi|^2+|D^3\phi||D\phi||D^2\phi|^2)\\
  &+C|D\phi|^2(|D^3\phi|+|D\phi|^2|D^3\phi|+|D\phi||D^2\phi|^2)^2\\
  &+C(|D\phi||D^3\phi|+|D^2\phi|^2)^2\\
  &\leqslant C(|D^3\phi|^2|D\phi|^2 +|D^2\phi|^4).
  \end{split}
\end{equation*}
Here, we used the assumption $|D\phi| \leqslant 1$, which yields desired result.
\end{proof}
The following lemma gives the precise description of the inverted hypersurface near infinity.
\begin{lemma}
    Let $\Phi: \mathbb{B}_{r}(0)\to \bR^n$, $\Phi(z)=(z,\phi(z))$, be a four-dimensional graph where $\phi\in C^7(\bB_r(0), \bR^{n-4})$ satisfies
    \begin{equation*}
        \phi(0)=0, \quad D\phi(0)=0.
    \end{equation*}
    Denote by $P=D^2\phi(0):\bR^4\times \bR^4\to \bR^{n-4}$ the second fundamental form of $\Phi$ at the origin. After possibly restricting to a smaller ball again called $\mathbb{B}_r(0)$, the inverted submanifold
    \begin{equation*}
        \hat{\Phi}=I\circ \Phi: \bB_r(0)\backslash \{0\}\to \bR^n, \quad \hat{\Phi}=\frac{\Phi}{|\Phi|^2},
    \end{equation*}
    has a graph representation over some open neighborhood $\hat{U}\subset \bR^4$ of infinity. The corresponding graph function $\hat{\phi}:\hat{U}\to \bR^{n-4}$ has an expansion
    \begin{equation}
        \hat{\phi}(\zeta)=\frac{1}{2}P\left(\frac{\zeta}{|\zeta|},\frac{\zeta}{|\zeta|} \right)+\frac{1}{6}Q\left(\frac{\zeta}{|\zeta|},\frac{\zeta}{|\zeta|},\frac{\zeta}{|\zeta|} \right)|\zeta|^{-1}+\hat{\varphi}(\zeta)
    \end{equation}
where
\begin{equation*}
    \sum_{i=0}^{3} |\zeta|^{i+2}|D^{i}\varphi(\zeta)|\leq C, \quad \forall \zeta\in \hat{U}.
\end{equation*}
\end{lemma}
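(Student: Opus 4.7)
The plan is to exploit that the inversion $I(x) = x/|x|^2$ preserves rays through the origin: if $z = r\omega$ in polar coordinates on $\bR^4$, then the horizontal part $\zeta$ of $\hat{\Phi}(z)$ satisfies $\zeta/|\zeta| = \omega$ identically, so only the radial coordinate is modified. Accordingly, I will parametrize $\hat{\Phi}$ by $(u,\omega)$ with $u := 1/|\zeta|$, rather than directly by $\zeta$.

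First, I would verify the graph representation. Writing $|\Phi|^2 = r^2 f(r,\omega)$ with $f(r,\omega) := 1 + |\phi(r\omega)|^2/r^2$, the hypotheses $\phi(0) = 0$ and $D\phi(0) = 0$ give $\phi = O(r^2)$, so $f$ extends to a $C^5$ function on a neighborhood of $r = 0$ with $f(0,\omega) = 1$ and $\partial_r f(0,\omega) = 0$. Then $u = rf$ satisfies $\partial_r u|_{r=0} = 1$, and the implicit function theorem yields a $C^5$ inverse $r = r(u,\omega)$ on a neighborhood of $u = 0$, with $r(0,\omega) = 0$ and $\partial_u r(0,\omega) = 1$. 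This provides the desired diffeomorphism from a punctured neighborhood of $0$ in the $z$-domain onto a neighborhood $\hat{U}$ of infinity.

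Next, I would read off the expansion from the explicit formula
\[
\hat{\phi}(\zeta) = \frac{\phi(r\omega)}{r^2 f(r,\omega)} = \frac{\psi(r,\omega)}{f(r,\omega)},\qquad \psi(r,\omega) := \frac{\phi(r\omega)}{r^2},
\]
where $\psi$ likewise extends smoothly to $r = 0$ with $\psi(0,\omega) = \tfrac{1}{2} P(\omega,\omega)$ and $\partial_r\psi(0,\omega) = \tfrac{1}{6} Q(\omega,\omega,\omega)$. Composing with $r = r(u,\omega)$ and Taylor expanding in $u$ at $u = 0$, using $\partial_u r|_{u=0} = 1$, $f(0,\omega) = 1$, and $\partial_r f(0,\omega) = 0$, gives
\[
\hat{\phi}(\zeta) = \tfrac{1}{2} P(\omega,\omega) + \tfrac{u}{6}\, Q(\omega,\omega,\omega) + u^2\, \tilde{g}(u,\omega),
\]
with $\tilde{g}$ of class $C^3$ in a neighborhood of $u = 0$. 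Substituting $u = |\zeta|^{-1}$ and $\omega = \zeta/|\zeta|$ recovers the asserted expansion, with $\hat{\varphi}(\zeta) = |\zeta|^{-2}\,\tilde{g}(|\zeta|^{-1},\zeta/|\zeta|)$.

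Finally, the derivative bounds follow by the chain rule: $\partial_{\zeta_j} u = -u^2\omega_j$ and $|\partial_{\zeta_j}\omega_k| \leq C u$, so every $\zeta$-derivative acting on a function of $(u,\omega)$ whose $u$- and sphere-derivatives are uniformly bounded acquires an extra factor of $|\zeta|^{-1}$. Applied to $u^2 \tilde{g}(u,\omega)$ this yields $|D^i \hat{\varphi}(\zeta)| \leq C|\zeta|^{-(i+2)}$ for $i = 0,1,2,3$, as required. The principal obstacle is purely bookkeeping of regularity: to justify differentiating $\hat{\varphi}$ up to order three one needs $\tilde{g} \in C^3$ in $(u,\omega)$, which in turn requires roughly four $u$-derivatives of $\psi/f$, and this is precisely what the $C^7$ hypothesis on $\phi$ delivers; the geometry itself is routine polar bookkeeping.
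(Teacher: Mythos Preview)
Your argument is correct and rests on the same change of variables as the paper, but your polar organization is genuinely cleaner. The paper works in Cartesian coordinates: it defines the diffeomorphism $\Theta(z) = (I\circ\pi\circ I)(z,\phi(z)) = z\bigl(1 + |\phi(z)|^2/|z|^2\bigr)$ on a neighborhood of the origin, sets $\Psi = \Theta^{-1}$, and then tracks the error function $\psi(\zeta) = \Psi(I(\zeta)) - I(\zeta)$ and the expansion of $\hat\phi = |\zeta|^2\phi(\Psi(I(\zeta)))\big/(1+\langle\psi(\zeta),\zeta\rangle)$ through a sequence of weighted derivative estimates on $\Psi$, $\psi$, the numerator and the denominator separately. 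Your observation that $\zeta/|\zeta| = z/|z|$ (equivalently, that $\Theta(z)$ is a positive scalar multiple of $z$) reduces the full four-dimensional implicit function theorem to a one-dimensional inversion $u = rf(r,\omega) \mapsto r(u,\omega)$, after which $\hat\phi = \psi/f$ can be Taylor-expanded in $u$ directly; the remainder $u^2\tilde g(u,\omega)$ with $\tilde g\in C^3$ then yields the derivative bounds by a single application of the chain rule rather than the paper's term-by-term bookkeeping. Both arguments ultimately consume the $C^7$ regularity in the same way, but yours makes the mechanism (ray preservation plus a scalar IFT) more transparent.
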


\begin{proof}
    Since the proof is totally similar to that in two-dimensional case, we only sketch it here. We can define diffeomorphisms $\Phi: \mathbb{B}_r(0)\to U$ and $\Psi:U\to \mathbb{B}_r(0)$ by
    \begin{equation*}
        \Phi:=\left(I\circ \pi\circ I\right)(z,\phi(z)),\quad \Psi=\Phi^{-1}, 
    \end{equation*}
    where $U$ is an open neighborhood of the origin and $\pi$ is the projection onto the first factor so that for all $ w\in U\backslash \{0\}$
    \begin{equation*}
        |w|^{-3}|\Psi(w)-w|+|w|^{-2}|D\Psi(w)-\Id|+|w|^{-1}\left(|D^2\Psi(w)|+{|D^3\Psi(w)|}\right)\leq C.
    \end{equation*}
Hence, if we define $\psi: \hat{U}=I(U)\to \bR^4$, $\psi(\zeta)=\Psi(I(\zeta))-I(\zeta)$, we have
\begin{equation*}
    |\zeta|^3|\psi(\zeta)|+|\zeta|^4|D\psi(\zeta)|+|\zeta|^5|D^2\psi(\zeta)|+|\zeta|^6|D^3\psi(\zeta)|\leq C, \quad \forall\zeta \in \hat{U}.
\end{equation*}
(Here we replace $|\zeta|^7|D^3\psi(\zeta)|$ by $|\zeta|^6|D^3\psi(\zeta)|$ because $|\zeta|$ is sufficiently large.)
Therefore, $\hat{u}$ is defined by
\begin{equation*}
    \hat{\phi}=\frac{|\zeta|^2u(\Psi(I(\zeta)))}{1+\langle \psi(\zeta), \zeta\rangle}, \quad \forall \zeta\in \hat{U}.
\end{equation*}
The denominator satisfies
\begin{equation*}
    |\zeta|^2|\mu(\zeta)|+|\zeta|^3|D\mu(\zeta)|+|\zeta|^4|D^2\mu(\zeta)|+|\zeta|^5|D^3\mu(\zeta)|\leq C, \quad \forall\zeta \in \hat{U},
\end{equation*}
where $\mu(\zeta)=\left(1+\langle \psi(\zeta), \zeta\rangle\right)^{-1}-1$.
By our assumptions, we have
\begin{equation*}
    \phi(z)=\frac{1}{2}P\left(z,z\right)+\frac{1}{6}Q(z,z,z)+\varphi(z),
\end{equation*}
where
\begin{equation*}
    |z|^{-4}|\varphi(z)|+|z|^{-3}|D\varphi(z)|+|z|^{-2}|D^2\varphi(z)|+|z|^{-1}|D^3\varphi(z)|\leq C, \quad \forall z\in \mathbb{B}_r(0)\backslash \{0\}.
\end{equation*}
It follows that the nominator satisfies
\begin{equation*}
    |\zeta|^2\phi(\Psi(I(\zeta)))=\frac{1}{2}P\left(\frac{\zeta}{|\zeta|},\frac{\zeta}{|\zeta|} \right)+\frac{1}{6}Q\left(\frac{\zeta}{|\zeta|},\frac{\zeta}{|\zeta|},\frac{\zeta}{|\zeta|} \right)|\zeta|^{-1}+|\zeta|^2\lambda(\zeta),
\end{equation*}
where
\begin{equation*}
\begin{split}
    \lambda(\zeta)&=\frac{1}{2}\left\{2P(I(\zeta), \psi(\zeta))+P(\psi(\zeta), \psi(\zeta))\right\}\\
    &+\frac{1}{6}\left\{3Q(I(\zeta), \psi(\zeta), \psi(\zeta))+3Q(I(\zeta), I(\zeta), \psi(\zeta))+Q(\psi(\zeta), \psi(\zeta), \psi(\zeta))\right\}\\
    &+\varphi(\Psi(I(\zeta))).
    \end{split}
\end{equation*}
From above discussion, we conclude that
\begin{equation*}
    \sum_{i=0}^{3} |\zeta|^{i+2}|D^{i}\lambda(\zeta)|\leq C|\zeta|^{-2}.
\end{equation*}
The error term $\hat{\varphi}$ is given by
\begin{equation*}
    \begin{split}
        \hat{\varphi}(\zeta)&=\left(\frac{1}{2}P\left(\frac{\zeta}{|\zeta|},\frac{\zeta}{|\zeta|} \right)+\frac{1}{6}Q\left(\frac{\zeta}{|\zeta|},\frac{\zeta}{|\zeta|},\frac{\zeta}{|\zeta|} \right)|\zeta|^{-1}\right)\mu(\zeta)\\
        &+|\zeta|^2\lambda(\zeta)\left(1+\langle \psi(\zeta), \zeta\rangle\right)^{-1}
    \end{split}
\end{equation*}
which satisfies the desired estimate.
\end{proof}
\section{Construction of the connected sum}\label{connected-sum}
Let $\Phi_i:\Sigma_i\to \bR^n$, $i=1,2$, be $4-$dimensional closed immersed submanifolds of class $C^7$, and suppose that
\begin{equation*}
    \Phi^{-1}_i\{0\}=\{p_i\}, ~~\mbox{for some}~~ p_i\in \Sigma_i, \quad \im D\Phi_i (p_i)=\bR^4\times \{0\}, ~~\mbox{for}~~ i=1,2.
\end{equation*}
Without lose of generality, we assume that $\Phi_1$ is not the immersion of round sphere. 
Note that for any pair of multiplicity one points $X_i\in \Phi_i (\Sigma_i), i=1,2$, the above situation can be arranged by appropriate translations and rotations. For some $r>0$, there are local graph representations $(z, u(z))$ for $\Phi_1$ and $(z, v(z))$ for $\Phi_2$ of the form on $\mathbb{B}_{r}(0)$
\begin{equation}
        u(z)=p(z)+\varphi(z), \quad v(z)=q(z)+\psi(z),
\end{equation}
where $p(z)$ and $q(z)$ are polynomials of the form
\begin{equation}
    p(z)=\frac{1}{2}P(z,z)+\frac{1}{6}Q(z,z,z), \quad q(z)=\frac{1}{2}R(z,z)+\frac{1}{6}S(z,z,z).
\end{equation}
Here $P,R: \bR^4\times \bR^4\to \bR^{n-4}$ denote the associated second fundamental forms at the origin for $\Phi_1$ and $\Phi_2$, respectively. The error terms satisfy for any $z\in \mathbb{B}_r(0)\backslash \{0\}$
\begin{equation}
    \sum_{i=0}^{3} |z|^{-4+i}|D^{i}\varphi|\leq C, \quad \sum_{i=0}^{3} |z|^{-4+i}|D^{i}\psi|\leq C.
\end{equation}
We decompose the quadratic form $P$ in its pure trace part and its tracefree part,
\begin{equation*}
     P(z,z)=\frac{1}{4}(\tr P)|z|^2+\mathring{P}(z,z),
\end{equation*}
and consider the immersion $\mathring{\Phi}_1: \Sigma_1\backslash \{p_1\}\to \bR^n$, $\mathring{\Phi}_1(p)=\hat{\Phi}_1(p)-\frac{1}{4}\tr P$.
Using the expansion at infinity, we infer that for some $R<\infty$ the submanifold $\mathring{\Phi}_1$ has a graph representation $(\zeta, \mathring{u}(\zeta))$ at infinity of the form
\begin{equation*}
    \begin{split}
        &\mathring{u}(\zeta)=\mathring{p}(\zeta)+\mathring{\varphi}(\zeta) ~~\mbox{on}~~ \bR^4\backslash \mathbb{B}_R(0), \\
        & \sum_{i=0}^{3} |\zeta|^{i+2}|D^{i}\mathring{\varphi}(\zeta)|\leq C, \quad \forall \zeta\in \bR^4\backslash \mathbb{B}_R(0).
    \end{split}
\end{equation*}
where
\begin{equation}\label{expression-of-traceless-p}
    \mathring{p}(\zeta)=\frac{1}{2}\mathring{P}\left(\frac{\zeta}{|\zeta|},\frac{\zeta}{|\zeta|} \right)+\frac{1}{6}Q\left(\frac{\zeta}{|\zeta|},\frac{\zeta}{|\zeta|},\frac{\zeta}{|\zeta|} \right)|\zeta|^{-1}.
\end{equation}
 For the construction of the pasted submanifold we will blow down $\mathring{\Phi}_1$ and blow up $\Phi_2$. In general, the scaling of the graph of a function $w:\Omega\subset \bR^4 \to \bR^{n-4}$ with a factor $\lambda>0$ corresponds to the scaling of the function $w$ given by
\begin{equation*}
    w_{\lambda}: \Omega_{\lambda}=\{z\in\bR^4: \lambda^{-1}z\in \Omega\}\to \bR^{n-4}, \quad w_{\lambda}(z)=\lambda w(\lambda^{-1} z).
\end{equation*}
Now for $\alpha, \beta>0$ small enough, consider the scaled submanifolds $ \mathring{\Phi}_{1,\alpha}$ and $\Phi_{2,\beta^{-1}}$ which have graph representations
\begin{equation}
    \begin{split}
        &\mathring{u}_{\alpha}(z)=\mathring{p}_{\alpha}(z)+\mathring{\varphi}_{\alpha}(z) ~~\mbox{on}~~ \bR^4\backslash \mathbb{B}_{\alpha R}(0),\\
        & v_{\beta^{-1}}(z)=q_{\beta^{-1}}(z)+\psi_{\beta^{-1}}(z) ~~\mbox{on}~~ \mathbb{B}_{r\beta^{-1}}(0).
    \end{split}
\end{equation}
From above discussion, we obtain the bounds on error terms
\begin{equation}\label{rescale-decay}
\begin{split}
    &\sum_{i=0}^{3} |z|^{i+2}|D^{i}\mathring{\varphi}_{\alpha}(z)|\leq C\alpha^3  ~~\mbox{on}~~ \bR^4\backslash \bB_{\alpha R}(0),\\
    &\sum_{i=0}^{3} |z|^{-4+i}|D^{i}\psi_{\beta^{-1}}(z)|\leq C\beta^3  ~~\mbox{on}~~ \bB_{r\beta^{-1}}(0)\backslash \{0\}.
    \end{split}
\end{equation}

Choose a third small parameter $\gamma$ where $0<\alpha, \beta \ll \gamma$. Essentially, the construction of the pasted submanifold then is as follows: from the submanifold $ \mathring{\Phi}_{1,\alpha}$ we remove the end given by the graph $\mathring{u}_{\alpha}$ over $\bR^4\backslash \bB_{\gamma}(0)$, and from the submanifold $\Phi_{2,\beta^{-1}}$ we remove the disk given by the graph of $v_{\beta^{-1}}$ on $\bB_1(0)$. Then we connect the two submanifolds by the graph of a function $w: \bB_1(0)\backslash \bB_{\gamma}(0)\to \bR^{n-4}$, which suitably interpolates between $\mathring{u}_{\alpha}$ and $v_{\beta^{-1}}$.

Actually, a slight modification of this is more convenient. For given $\alpha>0$, we fix a function $\eta\in C^{\infty}(\bR)$ with the properties
\begin{equation*}
\begin{split}
    &\eta(s)=\left\{ 
    \begin{array}{cc}
      0  & ~~\mbox{for}~~ s\leq \frac{\sqrt{\alpha}}{4}, \\
      1  & ~~\mbox{for}~~ s\geq \frac{3\sqrt{\alpha}}{4},
    \end{array}
    \right.\\
    & |\eta|+\alpha^{\frac{1}{2}}|\eta'|+\alpha |\eta''|+\alpha^{\frac{3}{2}}|\eta'''|\leq C ~~\mbox{with C independent of}~~ \alpha.  
    \end{split}
\end{equation*}
We cut off the higher-order terms in the expansions of $\mathring{u}_{\alpha}$ and $v_{\beta^{-1}}$ by defining, for $r=|z|$,
\begin{equation*}
    w(z)=\left\{
    \begin{array}{cc}
       \mathring{p}_{\alpha}(z)+\eta(\gamma-r)\mathring{\varphi}_{\alpha}(z) &~~\mbox{for}~~  \alpha R<r< \gamma,  \\
       q_{\beta^{-1}}(z)+\eta(r-1)\psi_{\beta^{-1}}(z)   & ~~\mbox{for}~~ 1\leq r\leq \beta^{-1}r.
    \end{array}
    \right.
\end{equation*}
On the remaining annulus $\bB_1(0)\backslash \bB_{\gamma}(0)$, we define the interpolation $w$ as the unique solution of the equation
\begin{equation}\label{trilaplacian}
    \Delta_0^3 w=0 \quad ~~\mbox{on}~~ \bB_1(0)\backslash \bB_{\gamma}(0)
\end{equation}
subject to boundary conditions
\begin{equation}\label{boundary-conditions}
    \begin{split}
        & w=\mathring{p}_{\alpha}, \quad \partial_r w=\partial_r \mathring{p}_{\alpha}, \quad \Delta_0 w=\Delta_0 \mathring{p}_{\alpha}\quad ~~\mbox{on}~~ |z|=\gamma,\\
        & w=q_{\beta^{-1}}, \quad \partial_r w=\partial_r q_{\beta^{-1}}, \quad \Delta_0 w=\Delta_0 q_{\beta^{-1}}\quad ~~\mbox{on}~~ |z|=1,
    \end{split}
\end{equation}
where $\partial_r=\frac{\partial}{\partial r}$ and $\Delta_0=\sum_{i=1}^{4}\frac{\partial^2}{\partial x_i^2}$. The main purpose of this section is devoted to the explicit computation of the solution to the problem (\ref{trilaplacian}) and (\ref{boundary-conditions}). 

\subsection{Formal solution}\label{formal-solution}
First, we find formal solutions of (\ref{trilaplacian}).
\begin{lemma}\label{e}
    If $e(r)$ is a radial function satisfying $\Delta_0^3 (e(r))=0$, $e(r)$ must be in the form of
    \begin{equation}
        e(r)=\sum_{i=1}^{6}A_i e_i(r), \quad A_i\in \bR^{n-4},
    \end{equation}
where
\begin{equation*}
    \begin{split}
        &e_1(r)=r^{-2}, \quad e_2(r)=1, \quad e_3(r)=\ln r,\\
        &e_4(r)=r^2, \quad e_5(r)=r^2\ln r, \quad e_6(r)=r^4.
    \end{split}
\end{equation*}
If we define $E(r)=[e_1(r), e_2(r), e_3(r), e_4(r), e_5(r), e_6(r)]$, the matrix $\mM^{e}$ encodes values of $\{e_i(r)\}_{i=1}^{6}$ up to the second derivative on $r=\gamma$ and $r=1$:
\begin{equation}\label{Me}
\mM^{e}:=\begin{bmatrix}
    E(\gamma) &\\
    \partial_r E(\gamma) &\\
    \partial^2_r E(\gamma) &\\
    E(1) &\\
    \partial_r E(1) &\\
    \partial^2_r E(1) &\\
\end{bmatrix}=\begin{bmatrix}
\gamma^{-2} & 1 & \ln \gamma & \gamma^2 & \gamma^2\ln \gamma& \gamma^4\\
-2\gamma^{-3} & 0 & \gamma^{-1} & 2\gamma & 2\gamma\ln \gamma+\gamma& 4\gamma^3\\
6\gamma^{-4} & 0 & - \gamma^{-2} & 2 & 2\ln \gamma+3& 12\gamma^2\\
1 & 1 & 0 & 1 & 0& 1\\
-2 & 0 & 1 & 2 & 1& 4\\
6 & 0 & -1 & 2 & 3& 12
\end{bmatrix}.
\end{equation}
Moreover, the Laplace operator $\Delta_0$ acts on the six-dimensional space $E(r)$ as the matrix:
\begin{equation}\label{eLaplace}
    \Delta_0(E(r))=E(r)\cdot 
    \begin{bmatrix}
0 & 0 & 2 & 0 & 0& 0\\
0 & 0 & 0 & 8 & 6& 0\\
0 & 0 & 0 & 0 & 8& 0\\
0 & 0 & 0 & 0 & 0& 24\\
0 & 0 & 0 & 0 & 0& 0\\
0 & 0 & 0 & 0 & 0& 0
\end{bmatrix}.
\end{equation}
\end{lemma}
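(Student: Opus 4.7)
The plan is to reduce the lemma to a sixth-order linear ODE in the radial variable $r$, enumerate a basis of solutions by inspection, and then verify the two matrices by direct computation. For a radial function $e(r)$ on $\bR^4$, the Laplacian reads $\Delta_0 e = e''(r) + 3 r^{-1} e'(r)$, so $\Delta_0^3 e = 0$ becomes a linear ODE of order six. On any annulus not containing the origin, its solution space therefore has dimension exactly six, and it will suffice to exhibit six linearly independent radial solutions.

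The building block is the identity $\Delta_0(r^k) = k(k+2) r^{k-2}$ valid for every real $k$, together with $\Delta_0(\ln r) = 2 r^{-2}$. The exponents $k=0$ and $k=-2$ are critical, so the radial kernel of $\Delta_0$ is spanned by $\{1, r^{-2}\}$. I would then lift iteratively: solving $\Delta_0 u = 1$ and $\Delta_0 u = r^{-2}$ yields $r^2/8$ and $(\ln r)/2$ respectively, so the radial kernel of $\Delta_0^2$ is spanned by $\{1, r^{-2}, r^2, \ln r\}$. A second lift solves $\Delta_0 u = r^2$ via $u = r^4/24$, while $\Delta_0 u = \ln r$ is solved by $u = r^2 \ln r$ modulo a multiple of $r^2$; the key sub-computation here is $\Delta_0(r^2 \ln r) = 8 \ln r + 6$, which follows from the product rule. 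Collecting these together produces exactly the six functions $e_1,\dots,e_6$ in the statement. Linear independence is immediate from their pairwise distinct asymptotic behaviors as $r \to 0^+$ and $r \to \infty$, and combined with the ODE dimension bound this shows $\{e_1,\dots,e_6\}$ forms a basis.

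Both matrices are then purely mechanical. For $\mM^e$ I would tabulate $e_i,\, \partial_r e_i,\, \partial_r^2 e_i$ at $r=\gamma$ and $r=1$ using the elementary derivatives $\partial_r r^{-2} = -2 r^{-3}$, $\partial_r^2 r^{-2} = 6 r^{-4}$, $\partial_r \ln r = r^{-1}$, $\partial_r (r^2 \ln r) = 2r \ln r + r$, $\partial_r^2 (r^2 \ln r) = 2\ln r + 3$, and the obvious ones for $r^2$ and $r^4$. The matrix in (\ref{eLaplace}) records the expressions of $\Delta_0 e_j$ in the basis $\{e_1,\dots,e_6\}$ for each $j$; these coefficients have already been determined above, for instance $\Delta_0 e_5 = 8 e_3 + 6 e_2$ and $\Delta_0 e_6 = 24 e_4$.

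There is no substantive obstacle in the argument. The only step requiring genuine care is the coupled computation $\Delta_0(r^2 \ln r) = 8\ln r + 6$, where the polynomial and logarithmic parts interact through the $3r^{-1}$ drift; once this identity is pinned down, the rest of the lemma reduces to bookkeeping.
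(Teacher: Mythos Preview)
Your proposal is correct and follows essentially the same route as the paper: both reduce to the radial ODE via $\Delta_0 r^k = k(k+2)r^{k-2}$ and read off the six independent solutions, with the logarithmic terms appearing at the repeated exponents $k=0,2$. The only cosmetic difference is that the paper factors the indicial polynomial $(k+2)k^2(k-2)^2(k-4)=0$ directly, whereas you build the kernel of $\Delta_0^3$ by successive lifts from $\ker\Delta_0$; the verifications of $\mM^e$ and the Laplacian matrix are identical bookkeeping in either presentation.
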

\begin{proof}
    Note that
    \begin{equation*}
        \Delta_0=\frac{\partial^2}{\partial r^2}+\frac{3}{r}\cdot\frac{\partial}{\partial r}+\frac{1}{r^2}\cdot \Delta_{S^3}.
    \end{equation*}
Acting on $r^k$, we have
\begin{equation*}
    \Delta_0 (r^k)=k(k+2)r^{k-2}.
\end{equation*}
Therefore, from
\begin{equation*}
    0=\Delta^3_0 (r^k)=(k+2)\cdot k^2 \cdot (k-2)^2\cdot (k-4)r^{k-6},
\end{equation*}
we know that
\begin{equation*}
    \begin{split}
        k&=-2, \quad e_1(r)=r^{-2},\\
        k&=0, \quad e_2(r)=1, \quad e_3(r)=\ln r,\\
        k&=2, \quad e_4(r)=r^2, \quad e_5(r)=r^2\ln r,\\
        k&=4, \quad e_6(r)=r^4.
    \end{split}
\end{equation*}
\end{proof}

\begin{lemma}\label{f}
    If $f(r)$ is a radial function satisfying $\Delta_0^3 (f(r)X_i)=0$, for some $X_i\in \mathcal{H}^{\bS}_{1}$, $f(r)$ must be in the form of
    \begin{equation}
        f(r)=\sum_{i=1}^{6}B_i F_i(r), \quad B_i\in \bR^{n-4},
    \end{equation}
where
\begin{equation*}
    \begin{split}
        &F_1(r)=r^{-3}, \quad F_2(r)=r^{-1}, \quad F_3(r)=r,\\
        &F_4(r)=r\ln r, \quad F_5(r)=r^3, \quad F_6(r)=r^5.
    \end{split}
\end{equation*}
Moreover, the matrix $\mM^{f}$ encodes values of $\{F_i(r)\}_{i=1}^{6}$ on $r=\gamma$ and $r=1$:
\begin{equation}\label{Mf}
\mM^{f}:=\begin{bmatrix}
\gamma^{-3} & \gamma^{-1} &  \gamma & \gamma \ln \gamma & \gamma^3& \gamma^5\\
-3\gamma^{-4} & -\gamma^{-2} & 1 & 1+\ln \gamma & 3\gamma^2& 5\gamma^4\\
12\gamma^{-5} & 2\gamma^{-3} & 0 & \gamma^{-1} & 6 \gamma& 20\gamma^3\\
1 & 1 & 1 & 0 & 1& 1\\
-3 & -1 & 1 & 1 & 3& 5\\
12 & 2 & 0 & 1 & 6& 20
\end{bmatrix}.
\end{equation}

\end{lemma}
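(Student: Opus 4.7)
The plan is to reduce the PDE to a radial ODE by exploiting the fact that any $X_i \in \mathcal{H}^{\bS}_1$ is a first-degree spherical harmonic, hence an eigenfunction of $\Delta_{\mathbb{S}^3}$ with eigenvalue $-1(1+2) = -3$. Since
\[
\Delta_0 = \partial_r^2 + \frac{3}{r}\partial_r + \frac{1}{r^2}\Delta_{\mathbb{S}^3},
\]
separation gives $\Delta_0(f(r)X_i) = (Lf)(r)\, X_i$ with
\[
L f := f'' + \frac{3}{r} f' - \frac{3}{r^2} f,
\]
so the hypothesis $\Delta_0^3(f(r)X_i)=0$ is equivalent to the scalar Euler-type ODE $L^3 f = 0$ on $(0,\infty)$.

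Next I would diagonalize $L$ on power functions. A direct computation shows
\[
L(r^k) = \bigl(k(k-1) + 3k - 3\bigr)\, r^{k-2} = (k+3)(k-1)\, r^{k-2},
\]
so iterating,
\[
L^3(r^k) = (k+3)(k-1)\,(k+1)(k-3)\,(k-1)(k-5)\, r^{k-6}.
\]
The roots of the characteristic polynomial are $k = -3, -1, 1, 1, 3, 5$; hence five of the six linearly independent solutions are the pure powers $r^{-3}, r^{-1}, r, r^{3}, r^{5}$. The double root at $k=1$ contributes the extra solution $r \log r$, obtainable by the standard reduction of order or by differentiating $L^3(r^k)$ in $k$ at $k=1$. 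This yields the basis $\{F_1,\dots,F_6\} = \{r^{-3}, r^{-1}, r, r\log r, r^3, r^5\}$ claimed in the statement, and since $L^3$ is a sixth-order linear ODE with regular singular point at the origin, this basis is exhaustive.

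The matrix $\mM^f$ in \eqref{Mf} is then a routine verification: compute $\partial_r F_i$ and $\partial_r^2 F_i$ for each $i$, and evaluate at $r = \gamma$ and $r = 1$. For example, $\partial_r(r^{-3}) = -3r^{-4}$ and $\partial_r^2(r^{-3}) = 12 r^{-5}$, which yield the first column; the only entries that need any care are those for $F_4 = r\log r$, where $\partial_r F_4 = 1 + \log r$ and $\partial_r^2 F_4 = r^{-1}$, giving entries $\gamma\log\gamma$, $1+\log\gamma$, $\gamma^{-1}$ at $r=\gamma$ and $0$, $1$, $1$ at $r=1$, in agreement with the fourth column of \eqref{Mf}. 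There is no real obstacle beyond identifying the double root at $k=1$; the entire lemma parallels Lemma \ref{e}, with the spherical-harmonic eigenvalue $-3$ replacing $0$ and producing the shift of the characteristic roots.
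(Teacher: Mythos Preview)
Your proof is correct and follows essentially the same approach as the paper: reduce to the radial operator via the spherical-harmonic eigenvalue $\Delta_{\mathbb{S}^3}X_i=-3X_i$, compute the action on $r^k$ to obtain the characteristic factorization $(k+3)(k+1)(k-1)^2(k-3)(k-5)$, and read off the six solutions with the logarithmic term coming from the double root at $k=1$. Your treatment is slightly more explicit about the underlying Euler-type ODE and the reduction-of-order justification for $r\log r$, but the argument is the same.
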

\begin{proof}
    Note that
    \begin{equation*}
        \Delta_{S^3}(X_i)=-3X_i.
    \end{equation*}
Acting on $r^kX_i$, we have
\begin{equation*}
    \Delta_0 (r^kX_i)=(k-1)(k+3)r^{k-2}X_i.
\end{equation*}
Therefore, from
\begin{equation*}
    0=\Delta^3_0 (r^kX_i)=(k-5)\cdot(k-3)\cdot(k-1)^2\cdot(k+1)\cdot(k+3)r^{k-6}X_i,
\end{equation*}
we know that
\begin{equation*}
    \begin{split}
        k&=-3, \quad F_1(r)=r^{-3},\\
        k&=-1, \quad F_2(r)=r^{-1},\\
        k&=1, \quad F_3(r)=r, \quad F_4(r)=r\ln r,\\
        k&=3, \quad F_5(r)=r^3,\\
        k&=5, \quad F_6(r)=r^5.
    \end{split}
\end{equation*}
\end{proof}

\begin{lemma}\label{g}
    If $g(r)$ is a radial function satisfying $\Delta_0^3 (g(r)Y_i)=0$, for some $Y_i\in \mathcal{H}^{\bS}_{2}$, $g(r)$ must be in the form of
    \begin{equation}
        g(r)=\sum_{i=1}^{6}C_i G_i(r), \quad C_i\in \bR^{n-4},
    \end{equation}
where
\begin{equation*}
    \begin{split}
        &G_1(r)=r^{-4}, \quad G_2(r)=r^{-2}, \quad G_3(r)=1,\\
        &G_4(r)=r^2, \quad G_5(r)=r^4, \quad G_6(r)=r^6.
    \end{split}
\end{equation*}
Moreover, the matrix $\mM^{g}$ encodes values of $\{G_i(r)\}_{i=1}^{6}$ on $r=\gamma$ and $r=1$:
\begin{equation}\label{Mg}
\mM^{g}:=\begin{bmatrix}
\gamma^{-4} & \gamma^{-2} &  1 & \gamma^2 & \gamma^4& \gamma^6\\
-4\gamma^{-5} & -2\gamma^{-3} & 0 & 2 \gamma & 4\gamma^3& 6\gamma^5\\
20\gamma^{-6} & 6\gamma^{-4} & 0 & 2 & 12 \gamma^2& 30\gamma^4\\
1 & 1 & 1 & 1 & 1& 1\\
-4 & -2 & 0 & 2 & 4& 6\\
20 & 6 & 0 & 2 & 12& 30
\end{bmatrix}.
\end{equation}

\end{lemma}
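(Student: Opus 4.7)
The plan is to follow exactly the strategy of Lemmas~\ref{e} and~\ref{f}. Because $Y_i \in \mathcal{H}^{\bS}_{2}$ is an eigenfunction of the round sphere Laplacian with eigenvalue $-h(h+2) = -8$ (for $h=2$), separation of variables reduces the problem to finding the roots of a polynomial in the power exponent $k$. More precisely, using
\[
\Delta_0 = \frac{\partial^2}{\partial r^2} + \frac{3}{r}\frac{\partial}{\partial r} + \frac{1}{r^2}\Delta_{S^3},
\]
I would first compute
\[
\Delta_0(r^k Y_i) = \bigl(k(k+2) - 8\bigr) r^{k-2} Y_i = (k-2)(k+4)\, r^{k-2} Y_i.
\]
Iterating this identity three times (which amounts to shifting $k \mapsto k-2$ twice), I obtain
\[
\Delta_0^3(r^k Y_i) = (k-2)(k+4)\,(k-4)(k+2)\,(k-6)k\; r^{k-6} Y_i.
\]

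Setting the coefficient to zero yields the six roots $k \in \{-4,-2,0,2,4,6\}$, all distinct, so no logarithmic correction terms appear (unlike in Lemma~\ref{f}, where the double root at $k=1$ forced the $r\ln r$ basis element). This produces the six radial profiles $G_1,\dots,G_6$ listed in the statement. Since the space of radial solutions of a sixth order ODE in $r$ is six-dimensional, these must span all solutions of $\Delta_0^3(g(r)Y_i) = 0$ of separated form, completing the first part.

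For the matrix $\mM^g$, the plan is simply to evaluate each $G_i$ together with its first two $r$-derivatives at $r = \gamma$ and $r = 1$, and assemble the rows in the order $G_i(\gamma), \partial_r G_i(\gamma), \partial_r^2 G_i(\gamma), G_i(1), \partial_r G_i(1), \partial_r^2 G_i(1)$; this is a direct substitution since each $G_i$ is a monomial. There is no real obstacle here: the computation is algebraically straightforward and structurally parallel to the two preceding lemmas. The only thing to verify is that the six exponents are genuinely distinct so that the basis is correctly identified without needing any logarithmic companion solutions, which is immediate from the factorization above.
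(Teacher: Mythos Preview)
Your proposal is correct and follows essentially the same approach as the paper: separation of variables using $\Delta_{S^3}Y_i=-8Y_i$, the identity $\Delta_0(r^kY_i)=(k-2)(k+4)r^{k-2}Y_i$, iteration to obtain the sixth-degree factorization, and direct evaluation of the $G_i$ and their first two derivatives at $r=\gamma,1$ for the matrix $\mM^g$. Your additional remark that the roots are all simple (so no logarithmic companions arise) is a helpful clarification that the paper leaves implicit.
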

\begin{proof}
    Note that
    \begin{equation*}
        \Delta_{S^3}(Y_i)=-8Y_i.
    \end{equation*}
Acting on $r^kY_i$, we have
\begin{equation*}
    \Delta_0 (r^kY_i)=(k-2)(k+4)r^{k-2}Y_i.
\end{equation*}
Therefore, from
\begin{equation*}
    0=\Delta^3_0 (r^kY_i)=(k-6)\cdot (k-4)\cdot (k-2)\cdot k\cdot(k+2)\cdot(k+4)r^{k-6}Y_i,
\end{equation*}
we know that
\begin{equation*}
    \begin{split}
        k&=-4, \quad G_1(r)=r^{-4},\\
        k&=-2, \quad G_2(r)=r^{-2},\\
        k&=0, \quad G_3(r)=1,\\
        k&=2, \quad G_4(r)=r^2,\\
        k&=4, \quad G_5(r)=r^4,\\
        k&=6, \quad G_6(r)=r^6.
    \end{split}
\end{equation*}
\end{proof}

\begin{lemma}\label{h}
    If $h(r)$ is a radial function satisfying $\Delta_0^3 (h(r)W_i)=0$, for some $W_i\in \mathcal{H}^{\bS}_{3}$, $h(r)$ must be in the form of
    \begin{equation}
        h(r)=\sum_{i=1}^{6}D_i H_i(r), \quad D_i\in \bR^{n-4},
    \end{equation}
where
\begin{equation*}
    \begin{split}
        &H_1(r)=r^{-5}, \quad H_2(r)=r^{-3}, \quad H_3(r)=r^{-1},\\
        &H_4(r)=r^3, \quad H_5(r)=r^5, \quad H_6(r)=r^7.
    \end{split}
\end{equation*}
Moreover, the matrix $\mM^{h}$ encodes values of $\{H_i(r)\}_{i=1}^{6}$ on $r=\gamma$ and $r=1$:
\begin{equation}\label{Mh}
\mM^{h}:=\begin{bmatrix}
\gamma^{-5} & \gamma^{-3} &  \gamma^{-1} & \gamma^3 & \gamma^5& \gamma^7\\
-5\gamma^{-6} & -3\gamma^{-4} & -\gamma^{-2} & 3\gamma^2 & 5\gamma^4& 7\gamma^6\\
30\gamma^{-7} & 12\gamma^{-5} & 2\gamma^{-3} & 6\gamma & 20 \gamma^3& 42\gamma^5\\
1 & 1 & 1 & 1 & 1& 1\\
-5 & -3 & -1 & 3 & 5& 7\\
30 & 12 & 2 & 6 & 20& 42
\end{bmatrix}.
\end{equation}

\end{lemma}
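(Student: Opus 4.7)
The plan is to mirror the argument of Lemmas~\ref{e}, \ref{f}, and \ref{g} verbatim, since the only input that changes from one lemma to the next is the eigenvalue of $\Delta_{\bS^3}$ on the relevant spherical harmonic. The four steps are: decompose $\Delta_0$ in polar form on $\bR^4$, reduce the equation $\Delta_0^3(r^kW_i)=0$ to a polynomial equation in $k$, extract the six roots, and then assemble $\mM^h$ by direct substitution.

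First I would invoke the polar decomposition $\Delta_0 = \partial_r^2 + \tfrac{3}{r}\partial_r + \tfrac{1}{r^2}\Delta_{\bS^3}$, together with the standard identity $\Delta_{\bS^3}W_i = -15\,W_i$ for $W_i \in \mathcal{H}^{\bS}_{3}$ (since on $\bS^3$ the Laplace--Beltrami operator has eigenvalue $-\ell(\ell+2)$ on degree-$\ell$ spherical harmonics). Applying this to the ansatz $r^kW_i$ yields
\begin{equation*}
    \Delta_0(r^kW_i) = (k^2+2k-15)\,r^{k-2}W_i = (k-3)(k+5)\,r^{k-2}W_i,
\end{equation*}
and iterating three times (each pass shifts the degree by $-2$) gives
\begin{equation*}
    \Delta_0^3(r^kW_i) = (k-3)(k-5)(k-7)(k+5)(k+3)(k+1)\,r^{k-6}W_i.
\end{equation*}
Setting the polynomial factor to zero produces the six roots $k \in \{-5,-3,-1,3,5,7\}$, which are precisely the exponents of the claimed basis $H_1,\dots,H_6$. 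Unlike Lemmas~\ref{e} and \ref{f}, all six roots here are simple, so no logarithmic corrections appear; this is why the basis contains only pure powers of $r$.

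To finish, I would observe that once the factor $W_i$ is pulled out, $\Delta_0^3$ restricts to a sixth-order Cauchy--Euler operator in $r$, whose kernel therefore has dimension exactly six, so the six functions $H_i$ exhaust the solution space. The matrix $\mM^h$ is then obtained by computing $H_i(r)$, $\partial_r H_i(r)$, and $\partial_r^2 H_i(r)$ at $r=\gamma$ and $r=1$ via the power rule. No substantive obstacle is expected: the whole lemma is an algebraic variation on its three predecessors, the only piece of bookkeeping being the verification that the sixfold root set $\{-5,-3,-1,3,5,7\}$ is simple, which rules out any $\log r$ factors.
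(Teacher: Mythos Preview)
Your proposal is correct and follows essentially the same approach as the paper's proof: both compute $\Delta_0(r^kW_i)=(k-3)(k+5)r^{k-2}W_i$ from the eigenvalue $-15$ of $\Delta_{\bS^3}$ on $\mathcal{H}^{\bS}_3$, iterate to obtain the sixth-degree polynomial in $k$, and read off the exponents $\{-5,-3,-1,3,5,7\}$. Your additional remarks---that the roots are simple so no $\ln r$ terms appear, and that the Cauchy--Euler operator has a six-dimensional kernel---make explicit what the paper leaves implicit, but the substance is identical.
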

\begin{proof}
    Note that
    \begin{equation*}
        \Delta_{S^3}(W_i)=-15W_i.
    \end{equation*}
Acting on $r^kW_i$, we have
\begin{equation*}
    \Delta_0 (r^kW_i)=(k-3)(k+5)r^{k-2}W_i.
\end{equation*}
Therefore, from
\begin{equation*}
    0=\Delta^3_0 (r^kW_i)=(k-7)\cdot (k-5)\cdot (k-3)\cdot (k+1)\cdot(k+3)\cdot(k+5)r^{k-6}W_i,
\end{equation*}
we know that
\begin{equation*}
    \begin{split}
        k&=-5, \quad H_1(r)=r^{-5},\\
        k&=-3, \quad H_2(r)=r^{-3},\\
        k&=-1, \quad H_3(r)=r^{-1},\\
        k&=3, \quad H_4(r)=r^3,\\
        k&=5, \quad H_5(r)=r^5,\\
        k&=7, \quad H_6(r)=r^7.
    \end{split}
\end{equation*}
\end{proof}

Combining Lemma \ref{e}, \ref{f}, \ref{g} and \ref{h}, we may assume that $w$ is in the form of
\begin{equation*}
    w=e(r)+\sum_{i=1}^{N_1}f_i(r)X_i+\sum_{i=1}^{N_2}g_i(r)Y_i+\sum_{i=1}^{N_3}h_i(r)W_i,
\end{equation*}
where
\begin{equation}\label{w}
    \begin{split}
        &e(r)=\sum_{l=1}^{6}A_l e_l(r), \quad f_i(r)=\sum_{l=1}^{6}B^l_i F_l(r),\\
        &g_i(r)=\sum_{l=1}^{6}C^l_i G_l(r),\quad h_i(r)=\sum_{l=1}^{6}D^l_i H_l(r).
    \end{split}
\end{equation}

\subsection{Solving coefficients}\label{coefficients}
Next, to determine coefficients in the expansion (\ref{w}), we evaluate boundary values of $w$. 

Before that, we write down the expansion of (\ref{boundary-conditions}) in terms of spherical harmonics.
\begin{equation}
    \begin{split}
        w&=\frac{\alpha}{2}\mathring{P}\left(\frac{z}{|z|},\frac{z}{|z|} \right)+\frac{\alpha^2}{6}Q\left(\frac{z}{|z|},\frac{z}{|z|},\frac{z}{|z|} \right)\gamma^{-1},\\
        \partial_r w&=-\frac{\alpha^2}{6}Q\left(\frac{z}{|z|},\frac{z}{|z|},\frac{z}{|z|} \right)\gamma^{-2},\\
        \partial^2_r w&=\frac{\alpha^3}{3}Q\left(\frac{z}{|z|},\frac{z}{|z|},\frac{z}{|z|} \right)\gamma^{-3}, \quad ~~\mbox{for}~~ |z|=\gamma,
    \end{split}
\end{equation}
and 
\begin{equation}
    \begin{split}
        w&=\frac{\beta}{2}R\left(z,z \right)+\frac{\beta^2}{6}S\left(z,z,z \right),\\
        \partial_r w&=\beta R\left(z,z \right)+\frac{\beta^2}{2}S\left(z,z,z \right),\\
        \partial^2_r w&=\beta R\left(z,z \right)+\beta^2 S\left(z,z,z \right), \quad ~~\mbox{for}~~ |z|=1.
    \end{split}
\end{equation}
Moreover, by (\ref{expression-of-traceless-p}), we may expand $\mathring{P}, Q, R, S$ as follows:
\begin{equation}\label{PQRS-expansion}
    \begin{split}
        \mathring{P}\left(\frac{z}{|z|},\frac{z}{|z|} \right)=\sum_{i=1}^{N_2}p_i\cdot Y_i, &\quad Q\left(\frac{z}{|z|},\frac{z}{|z|},\frac{z}{|z|} \right)=\sum_{i=1}^{N_1} q^{(1)}_i\cdot X_i+\sum_{i=1}^{N_3}q^{(3)}_i \cdot W_i;\\
        R\left(\frac{z}{|z|},\frac{z}{|z|} \right)=r_0+\sum_{i=1}^{N_2}r_i\cdot Y_i, &\quad S\left(\frac{z}{|z|},\frac{z}{|z|},\frac{z}{|z|} \right)=\sum_{i=1}^{N_1} s^{(1)}_i\cdot X_i+\sum_{i=1}^{N_3}s^{(3)}_i\cdot W_i.
    \end{split}
\end{equation}
Therefore, on $|z|=\gamma$, the boundary condition is
\begin{equation}
    \begin{split}
        w&=\frac{\alpha}{2}\sum_{i=1}^{N_2}p_i\cdot Y_i+\frac{\alpha^2}{6\gamma}\left(\sum_{i=1}^{N_1} q^{(1)}_i\cdot X_i+\sum_{i=1}^{N_3}q^{(3)}_i \cdot W_i\right),\\
        \partial_r w&=-\frac{\alpha^2}{6\gamma^2}\left(\sum_{i=1}^{N_1} q^{(1)}_i\cdot X_i+\sum_{i=1}^{N_3}q^{(3)}_i \cdot W_i\right),\\
        \partial^2_r w&=\frac{\alpha^2}{3\gamma^3}\left(\sum_{i=1}^{N_1} q^{(1)}_i\cdot X_i+\sum_{i=1}^{N_3}q^{(3)}_i \cdot W_i\right),
    \end{split}
\end{equation}
and on $|z|=1$, the boundary condition is
\begin{equation}
    \begin{split}
        w&=\frac{\beta}{2}\left(r_0+\sum_{i=1}^{N_2}r_i\cdot Y_i \right)+\frac{\beta^2}{6}\left(\sum_{i=1}^{N_1} s^{(1)}_i\cdot X_i+\sum_{i=1}^{N_3}s^{(3)}_i\cdot W_i \right),\\
        \partial_r w&=\beta \left(r_0+\sum_{i=1}^{N_2}r_i\cdot Y_i \right)+\frac{\beta^2}{2}\left(\sum_{i=1}^{N_1} s^{(1)}_i\cdot X_i+\sum_{i=1}^{N_3}s^{(3)}_i\cdot W_i \right),\\
        \partial^2_r w&=\beta \left(r_0+\sum_{i=1}^{N_2}r_i\cdot Y_i \right)+\beta^2 \left(\sum_{i=1}^{N_1} s^{(1)}_i\cdot X_i+\sum_{i=1}^{N_3}s^{(3)}_i\cdot W_i \right).
    \end{split}
\end{equation}
Note that in the above expansion, coefficients are vector valued. In the sequel, for simplicity, we will abuse notation and identify a vector with its components. As usual, we denote by $O(\alpha^i \beta^j\gamma^k (\ln \gamma)^l)$ a quantity that is bounded in absolute value by a constant times $\alpha^i \beta^j\gamma^k (\ln \gamma)^l$. Eventually, we will let $\beta=t\alpha<<\gamma<<1$ in Section \ref{energy}.

\begin{lemma}\label{corfficient-e}
    Coefficients $\{A_l\}_{l=1}^6$ satisfy
    \begin{equation*}
    \begin{bmatrix}
        A_1\\
        A_2\\
        A_3\\
        A_4\\
        A_5\\
        A_6
    \end{bmatrix}=\beta r_0\begin{bmatrix}
       -\frac{\gamma^4}{2}+O(\gamma^6\ln \gamma)\\
        2\gamma^2\ln \gamma+O(\gamma^4(\ln \gamma)^3)\\
        -2\gamma^2\left(1+16\gamma^2(\ln \gamma)^2\right)+O(\gamma^4\ln \gamma)\\
       \frac{1}{2}\left(1+4\gamma^2\right)+O(\gamma^4(\ln \gamma)^4)\\
        2\gamma^2\left(4\ln \gamma+3\right)+O(\gamma^4(\ln \gamma)^3)\\
        -\gamma^2 \left(\ln \gamma+1\right)+O(\gamma^4(\ln \gamma)^3)
    \end{bmatrix}.
\end{equation*}
\end{lemma}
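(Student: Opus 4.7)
The plan is to reduce the computation to a $6\times 6$ linear system for $(A_1,\ldots,A_6)$ coming from the radial part of the boundary conditions (\ref{boundary-conditions}), and then solve this system asymptotically as $\gamma\to 0$ by iteration.

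First I would extract the purely radial part of the boundary data. From (\ref{PQRS-expansion}), projection onto the subspace of constant-on-$\mathbb{S}^3$ functions gives the following: at $|z|=\gamma$ the radial parts of $w,\,\partial_r w,\,\partial_r^2 w$ all vanish, because $\mathring{P}$ is traceless and $Q$ lies in the odd-degree span $\mathcal{H}_1^{\mathbb{S}}\oplus\mathcal{H}_3^{\mathbb{S}}$. At $|z|=1$ only the constant term $r_0$ in the decomposition of $R(\theta,\theta)$ contributes, giving $e(1)=\tfrac{\beta r_0}{2}$, $\partial_r e(1)=\beta r_0$, $\partial_r^2 e(1)=\beta r_0$. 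Combined with Lemma \ref{e} this yields the system
\[
\mM^{e}\,(A_1,A_2,A_3,A_4,A_5,A_6)^{T}\;=\;\beta r_0\,\bigl(0,0,0,\tfrac{1}{2},1,1\bigr)^{T},
\]
with $\mM^{e}$ the matrix (\ref{Me}).

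Next, I would identify the leading ansatz. The function $\tfrac{\beta r_0}{2}\,r^2$ satisfies all three conditions at $r=1$ exactly, so the leading contribution is $A_4\approx\tfrac{\beta r_0}{2}$ and all other coefficients enter as corrections. This leaves a residual at $r=\gamma$ of size $\bigl(\tfrac{\beta r_0}{2}\gamma^2,\,\beta r_0\gamma,\,\beta r_0\bigr)$, which must be cancelled by a linear combination of the remaining basis functions subject to the three now-homogeneous outer equations. Solving this reduced $5\times 5$ sub-system at leading order gives the dominant corrections: the basis function $\ln r$ has $\partial_r^2\ln r|_{r=\gamma}=-\gamma^{-2}$, so $A_3$ naturally appears at scale $\gamma^2$; the basis functions $r^{-2}$ and $r^4$ force $A_1\sim\gamma^4$ and $A_6\sim \gamma^2\ln\gamma$; the function $r^2\ln r$ forces $A_5\sim\gamma^2\ln\gamma$; and $A_2$ picks up $\gamma^2\ln\gamma$ from compatibility with the outer conditions.

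Finally, I would iterate once more to capture the sub-leading terms. After substituting the leading corrections into the outer boundary equations one obtains defects of size $\gamma^2$ with additional logarithmic factors; a second round of correction produces the $O(\gamma^2)$ refinement of $A_4$, the internal $(\ln\gamma)^2$ factor inside $A_3$, the $O(\gamma^4(\ln\gamma)^3)$ errors in $A_2,A_3,A_5$, and the $O(\gamma^6\ln\gamma)$ error in $A_1$. The remainder estimates follow by bounding each iterative residual by the next order of the expansion.

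The main obstacle is the bookkeeping of logarithmic factors. Because the triharmonic ODE on $\mathbb{R}^4$ has both $\ln r$ and $r^2\ln r$ in its kernel, each iteration can in principle inflate the power of $\ln\gamma$ in the residual. The delicate point is to verify that the powers stabilise at exactly the orders claimed, most notably the $(\ln\gamma)^2$ appearing inside the leading $A_3$ coefficient and the $(\ln\gamma)^4$ appearing in the error for $A_4$, without any spurious logarithmic amplification. One clean way to carry this out is Cramer's rule applied directly to $\mM^{e}$, expanding $\det\mM^{e}$ and the six numerator determinants as Laurent-type series in $\gamma$ whose coefficients are polynomials in $\ln\gamma$, and collecting terms order by order.
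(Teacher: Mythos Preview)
Your setup of the linear system $\mM^{e}(A_1,\ldots,A_6)^{T}=\beta r_0(0,0,0,\tfrac12,1,1)^{T}$ is exactly what the paper does (equation (\ref{eq:A})), and your scaling heuristics for the leading orders of each $A_l$ are correct. The paper, however, skips the iterative correction scheme entirely and goes straight to Cramer's rule: it writes out $\det\mM^{e}$ and the six numerator determinants $\det\mM^{e}_l$ as explicit polynomials in $\gamma$ and $\ln\gamma$, and then reads off the quotients asymptotically. This is precisely the ``clean way'' you mention in your final paragraph.

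The iterative ansatz you sketch is a sound way to \emph{predict} the scalings, but it is not well suited to pinning down the exact constants and the sharp powers of $\ln\gamma$ in the error terms --- as you yourself flag, the feedback between the $\ln r$ and $r^2\ln r$ basis functions makes it hard to certify that the logs stabilise at the claimed orders (e.g.\ the $(\ln\gamma)^4$ in the $A_4$ error) without effectively redoing the determinant calculation. So your proposal is correct, but the part that actually delivers the lemma is your last sentence, and that is the route the paper takes from the outset.
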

\begin{proof}
Comparing constant terms, we know that coefficients $\{A_l\}_{l=1}^6$ satisfy the equation
\begin{equation}\label{eq:A}
    \mM^{e}\cdot \begin{bmatrix}
        A_1\\
        A_2\\
        A_3\\
        A_4\\
        A_5\\
        A_6
    \end{bmatrix}=\begin{bmatrix}
        0\\
        0\\
        0\\
        \frac{\beta}{2}r_0\\
        \beta r_0\\
        \beta r_0
    \end{bmatrix}.
\end{equation}
The determinant of $M^e$ and corresponding cofactors are given by
\begin{equation}
    \begin{split}
        \det \mM^e=&-16\gamma^5 + 256\gamma^3 \ln^2(\gamma) - 384\gamma^3 \ln(\gamma) + 272\gamma^3 + 384\gamma \ln(\gamma)\\ 
        &- 736\gamma+ \frac{384 \ln(\gamma)}{\gamma} + \frac{736}{\gamma} - \frac{256 \ln^2(\gamma)}{\gamma^3} - \frac{384 \ln(\gamma)}{\gamma^3} - \frac{272}{\gamma^3} + \frac{16}{\gamma^5},
    \end{split}
\end{equation}

\begin{equation}
\begin{split}
    \det \mM^e_1=&-8 \beta r_0\frac{ \gamma^6 - 4 \gamma^4 \ln(\gamma) - \gamma^4 + 4 \gamma^2 \ln(\gamma) - \gamma^2+ 1}{\gamma},
\end{split}    
\end{equation}

\begin{equation}
    \begin{split}
        \det \mM^e_2=&32 \beta r_0\frac{-4 \gamma^6 \ln(\gamma) + 3 \gamma^6 + 3 \gamma^4 \ln(\gamma) - 6 \gamma^4  + 3 \gamma^2 +  \ln(\gamma)}{\gamma^3},
    \end{split}
\end{equation}

\begin{equation}
    \begin{split}
        \det \mM^e_3=&32 \beta r_0\frac{- \gamma^8 + 4 \gamma^6 - 6 \gamma^4  + 4 \gamma^2  - 1}{\gamma^3},
    \end{split}
\end{equation}

\begin{equation}
    \begin{split}
        \det \mM^e_4=&8 \beta r_0\frac{16 \gamma^8 \ln^2(\gamma) - 12 \gamma^8 \ln(\gamma) + 4 \gamma^8 + 12 \gamma^6 \ln(\gamma) - 19 \gamma^6 + 24 \gamma^4  \ln(\gamma) }{\gamma^5}\\
        &8 \beta r_0\frac{+ 27 \gamma^4 - 16 \gamma^2 \ln^2(\gamma) - 24 \gamma^2 \ln(\gamma) - 13 \gamma^2 + 1}{\gamma^5},
    \end{split}
\end{equation}

\begin{equation}
    \begin{split}
       \det \mM^e_5=& 32\beta r_0\frac{-4 \gamma^6 \ln(\gamma) + 3 \gamma^6 - 3  \gamma^4 - 3  \gamma^2 + 4  \ln(\gamma) + 3 }{\gamma^3},
    \end{split}
\end{equation}

\begin{equation}
    \begin{split}
        \det \mM^e_6=&32 \beta r_0\frac{ \gamma^4 \ln(\gamma) -  \gamma^4 + 2 \gamma^2  -  \ln(\gamma) - 1}{\gamma^3}.
    \end{split}
\end{equation}

Asymptotically, we know that

\begin{equation}
\begin{split}
A_1&=\frac{\det \mM^e_1}{\det \mM^e}=\beta r_0\left(-\frac{\gamma^4}{2}\right)+O(\beta\gamma^6\ln \gamma),\\
A_2&=\frac{\det \mM^e_2}{\det \mM^e}=2\beta r_0\gamma^2\ln \gamma+O(\beta\gamma^4(\ln \gamma)^3),\\
A_3&=\frac{\det \mM^e_3}{\det \mM^e}=-2\beta r_0\gamma^2\left(1+16\gamma^2(\ln \gamma)^2\right)+O(\beta\gamma^4\ln \gamma),\\
A_4&=\frac{\det \mM^e_4}{\det \mM^e}=\frac{\beta r_0}{2}\left(1+4\gamma^2\right)+O(\beta\gamma^4(\ln \gamma)^4),\\
A_5&=\frac{\det \mM^e_5}{\det \mM^e}=2\beta r_0\gamma^2\left(4\ln \gamma+3\right)+O(\beta\gamma^4(\ln \gamma)^3),\\
A_6&=\frac{\det \mM^e_6}{\det \mM^e}=-2\beta r_0\gamma^2 \left(\ln \gamma+1\right)+O(\beta\gamma^4(\ln \gamma)^3).
\end{split}
\end{equation}
\end{proof}

\begin{lemma}\label{corfficient-f}
    Coefficients $\{B^l_i\}_{l=1}^6$ satisfy
    \begin{equation*}
    \begin{bmatrix}
        B_i^1\\
        B_i^2\\
        B_i^3\\
        B_i^4\\
        B_i^5\\
        B_i^6
    \end{bmatrix}=\alpha^2 q^{(1)}_i\begin{bmatrix}
        O(\gamma^4(\ln \gamma)^{-1})\\
        \frac{1}{6}-\frac{3}{8(\ln \gamma)^2}+O(\gamma^2)\\
        -\frac{1}{2}+O((\ln \gamma)^{-1})\\
        \frac{1}{2\ln \gamma}\left(1-\frac{3}{2\ln \gamma}\right)+O( \gamma^2 (\ln \gamma)^{-1})\\
        \frac{1}{2}\left(1-(\ln \gamma)^{-1}\right)+O((\ln \gamma)^{-2})\\
        -\frac{1}{6}\left(1+\frac{3}{4\ln \gamma}\right)+O(\gamma^2 (\ln \gamma)^{-2})
    \end{bmatrix}
    +\beta^2s^{(1)}_i \begin{bmatrix}
        0\\
       0\\
       0\\
       0\\
       \frac{1}{6}+O((\ln \gamma)^{-2})\\
      0
    \end{bmatrix}.
\end{equation*}
\end{lemma}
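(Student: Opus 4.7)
The plan is to project the boundary conditions (\ref{boundary-conditions}) onto the degree-one spherical harmonic subspace. Substituting the ansatz $w = \sum_l B_i^l F_l(r) X_i$ from Lemma \ref{f} and isolating the $X_i$-component of the boundary data expansions just preceding the lemma, the six boundary conditions on $w, \partial_r w, \partial_r^2 w$ at $r=\gamma$ and $r=1$ produce a $6\times 6$ linear system
\begin{equation*}
\mM^{f} \begin{bmatrix} B_i^1\\ B_i^2\\ B_i^3\\ B_i^4\\ B_i^5\\ B_i^6 \end{bmatrix} = \begin{bmatrix} \alpha^2 q_i^{(1)}/(6\gamma)\\ -\alpha^2 q_i^{(1)}/(6\gamma^2)\\ \alpha^2 q_i^{(1)}/(3\gamma^3)\\ \beta^2 s_i^{(1)}/6\\ \beta^2 s_i^{(1)}/2\\ \beta^2 s_i^{(1)} \end{bmatrix},
\end{equation*}
with $\mM^{f}$ as in (\ref{Mf}). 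By linearity I would split the solution into the contribution sourced by the inner boundary (proportional to $\alpha^2 q_i^{(1)}$) and the contribution sourced by the outer boundary (proportional to $\beta^2 s_i^{(1)}$), then solve each via Cramer's rule.

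The $\beta^2 s_i^{(1)}$ piece is essentially explicit: the cubic polynomial $\tfrac{\beta^2 s_i^{(1)}}{6} r^3 X_i$ exactly matches the three conditions at $r=1$ and is of order $\gamma^3$ at $r=\gamma$, so the solution is dominated by the $F_5 = r^3$ mode with coefficient $\tfrac{1}{6}$, while the other $B_i^l$ pick up only small corrections. The $\alpha^2 q_i^{(1)}$ piece is similarly driven by identifying $\tfrac{\alpha^2 q_i^{(1)}}{6} F_2(r) = \tfrac{\alpha^2 q_i^{(1)}}{6r}$ as the function matching all three conditions at $r=\gamma$ exactly; this gives $B_i^2 \sim \tfrac{1}{6}$ at leading order, but now the residual values at $r=1$ are of order one and must be cancelled by a triharmonic function built from the modes $F_3, F_4, F_5, F_6$. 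Writing down the resulting $3\times 4$ reduced system at $r=1$ and using that $F_3,F_5,F_6$ are $O(\gamma)$ at $r=\gamma$ while $F_4$ is $O(\gamma \ln\gamma)$ there leads to $O(1)$ values for $B_i^3, B_i^5$ and $O(1/\ln \gamma)$ values for $B_i^4, B_i^6$, consistent with the statement.

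Carrying this out rigorously requires expanding $\det \mM^{f}$ and the six numerator determinants asymptotically as $\gamma \to 0$. The main obstacle will be controlling the logarithmic corrections: since the column corresponding to $F_4(r) = r\ln r$ enters the last three rows with a factor of $\ln \gamma$ hidden through its derivative structure, its interaction with the singular columns $F_1, F_2$ in the first three rows produces $\det \mM^{f}$ of size $\gamma^{-7}\ln \gamma$ at leading order, while the numerator determinants develop expansions simultaneously in $\gamma$ and $1/\ln \gamma$. In particular, the $(\ln\gamma)^{-2}$ correction in $B_i^2$ and the $(\ln\gamma)^{-1}$ corrections in $B_i^4, B_i^5, B_i^6$ must be extracted by carefully tracking subleading terms in the cofactor expansion; this is book-keeping intensive but proceeds in the same spirit as Lemma \ref{corfficient-e}, with the odd-degree radial eigenfunctions of $\Delta_0$ replacing the even-degree ones and the logarithmic contribution entering through $F_4$ rather than through $e_3, e_5$.
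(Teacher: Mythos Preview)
Your approach is essentially the paper's: set up the $6\times 6$ system $\mM^f\mathbf{B}=\mathbf{b}$, then solve by Cramer's rule and expand each ratio $\det\mM^f_l/\det\mM^f$ asymptotically as $\gamma\to 0$. The paper does exactly this, writing out all seven determinants as explicit rational functions in $\gamma$ and $\ln\gamma$ and then reading off the expansions.

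Your added heuristic (identifying $\tfrac{1}{6}F_2$ and $\tfrac{1}{6}F_5$ as particular solutions matching one boundary each) is a nice organizing device the paper does not use, but two details in it are slightly off. First, $\det\mM^f$ is of order $\gamma^{-6}\ln\gamma$, not $\gamma^{-7}\ln\gamma$: the paper finds $\det\mM^f=128\gamma^{-6}(-2\ln\gamma-3)+\ldots$. Second, your claim that $B_i^6=O((\ln\gamma)^{-1})$ is inconsistent with the statement you are proving, which has $B_i^6=-\tfrac16+O((\ln\gamma)^{-1})$; in the reduced problem at $r=1$ the modes $F_3,F_5,F_6$ all receive $O(1)$ coefficients, with only $F_4$ suppressed by $(\ln\gamma)^{-1}$. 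Neither slip affects the validity of the Cramer's-rule computation you ultimately propose.
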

\begin{proof}
Comparing each $X_i$, coefficients $\{B_i^l\}_{l=1}^6$ satisfy
    \begin{equation}\label{eq:B}
   \mM^f\cdot \begin{bmatrix}
        B_i^1\\
        B_i^2\\
        B_i^3\\
        B_i^4\\
        B_i^5\\
        B_i^6
    \end{bmatrix}=\begin{bmatrix}
        \frac{\alpha^2}{6\gamma}q^{(1)}_i\\
         -\frac{\alpha^2}{6\gamma^2}q^{(1)}_i\\
         \frac{\alpha^2}{3\gamma^3}q^{(1)}_i\\
        \frac{\beta^2}{6}s^{(1)}_i\\
        \frac{\beta^2}{2} s^{(1)}_i\\
        \beta^2 s^{(1)}_i
    \end{bmatrix}.
\end{equation}

The determinant of $M^f$ and corresponding cofactors are given by
\begin{equation}\label{determinant-of-M-e}
\begin{split}
\det \mM^f=& 128\frac{(-2\gamma^{12} \ln\gamma + 3\gamma^{12} - 12\gamma^{10} + 18\gamma^8 \ln\gamma + 15\gamma^8 )}{\gamma^6}\\
&128\frac{(- 32\gamma^6 \ln\gamma + 18\gamma^4 \ln\gamma - 15\gamma^4 + 12\gamma^2 - 2\ln\gamma - 3)}{\gamma^6},
\end{split}
\end{equation}
\begin{equation}
    \begin{split}
        \det \mM^f_1=&96\alpha^{2}q^{(1)}_i\frac{\gamma^{6} - 4\gamma^{4}\ln(\gamma) - \gamma^{4} + 4\gamma^{2}\ln(\gamma) - \gamma^{2} + 1}{3\gamma^2}\\
       &+ 32\beta^{2}s^{(1)}_i\frac{- 4\gamma^{8}\ln(\gamma) + 3\gamma^{8} - 3\gamma^{6} - 3\gamma^{4} + 4\gamma^{2}\ln(\gamma) + 3\gamma^{2}}{3\gamma^{2}},
    \end{split}
\end{equation}
\begin{equation}
    \begin{split}
       \det \mM^f_2=& 64\alpha^{2}q^{(1)}_i\frac{-6\gamma^{10} + 18\gamma^{8}\ln(\gamma) + 9\gamma^{8} - 16\gamma^{6}\ln(\gamma) - 6\gamma^{4} + 6\gamma^{2} - 2\ln(\gamma) - 3}{3\gamma^6}\\
       &192\beta^{2}s^{(1)}_i\frac{+ 2\gamma^{12}\ln(\gamma) - \gamma^{12} + 2\gamma^{8} - 2\gamma^{4}\ln(\gamma) - \gamma^{4}}{3\gamma^{6}},
    \end{split}
\end{equation}
\begin{equation}
    \begin{split}
  \det \mM^f_3=&-32\alpha^{2}q^{(1)}_i\frac{-3\gamma^{10} + 12\gamma^{8}\ln(\gamma) + 3\gamma^{8} - 8\gamma^{6}\ln(\gamma) + 3\gamma^{2} - 4\ln(\gamma) - 3}{\gamma^6}\\
  &-32\beta^{2}s^{(1)}_i\frac{+ 4\gamma^{12}\ln(\gamma) - 3\gamma^{12} + 3\gamma^{10} + 8\gamma^{6}\ln(\gamma) - 12\gamma^{4}\ln(\gamma) + 3\gamma^{4} - 3\gamma^{2}}{\gamma^{6}},      
    \end{split}
\end{equation}
\begin{equation}
    \begin{split}
        \det \mM^f_4=&128\alpha^{2}q^{(1)}_i\frac{-\gamma^{10} + 3\gamma^{8} - 2\gamma^{6} - 2\gamma^{4} + 3\gamma^{2} - 1}{\gamma^6}\\
        &128\beta^{2}s^{(1)}_i\frac{+\gamma^{12} - 3\gamma^{10} + 2\gamma^{8} + 2\gamma^{6} - 3\gamma^{4} + \gamma^{2}}{\gamma^{6}},
    \end{split}
\end{equation}
\begin{equation}
    \begin{split}
        \det \mM^f_5=&192\alpha^{2}q^{(1)}_i\frac{2\gamma^{8}\ln(\gamma) - \gamma^{8} + 2\gamma^{4} - 2\ln(\gamma) - 1}{3\gamma^{6}}\\
        &64\beta^{2}s^{(1)}_i\frac{- 6\gamma^{10} + 18\gamma^{8}\ln(\gamma) + 9\gamma^{8} - 16\gamma^{6}\ln(\gamma) - 6\gamma^{4} + 6\gamma^{2} - 2\ln(\gamma) - 3}{3\gamma^{6}},
    \end{split}
\end{equation}
\begin{equation}
    \begin{split}
        \det \mM^f_6=&32\alpha^{2}q^{(1)}_i\frac{-4\gamma^{6}\ln(\gamma) + 3\gamma^{6} - 3\gamma^{4} - 3\gamma^{2} + 4\ln(\gamma) + 3}{3\gamma^{6}}\\
        &96\beta^{2}s^{(1)}_i\frac{+ \gamma^{8} - 4\gamma^{6}\ln(\gamma) - \gamma^{6} + 4\gamma^{4}\ln(\gamma) - \gamma^{4} + \gamma^{2}}{3\gamma^{6}}.
    \end{split}
\end{equation}

Asymptotically, we know that
\begin{equation}
\begin{split}
B_i^1&=\frac{\det \mM^f_1}{\det \mM^f}=O(\alpha^2\gamma^4(\ln \gamma)^{-1}),\\
B_i^2&=\frac{\det \mM^f_2}{\det \mM^f}=\alpha^{2}q^{(1)}_i\left(\frac{1}{6}-\frac{3}{8(\ln \gamma)^2}\right)+O(\alpha^2 \gamma^{2}),\\
B_i^3&=\frac{\det \mM^f_3}{\det \mM^f}=-\frac{\alpha^{2}q^{(1)}_i}{2}+O(\alpha^2(\ln \gamma)^{-1}),\\
B_i^4&=\frac{\det \mM^f_4}{\det \mM^f}=\frac{\alpha^{2}q^{(1)}_i}{2\ln \gamma}\left(1-\frac{3}{2\ln \gamma}\right)+O(\alpha^2 \gamma^2 (\ln \gamma)^{-1}),\\
B_i^5&=\frac{\det \mM^f_5}{\det \mM^f}=\frac{\alpha^{2}q^{(1)}_i}{2}\left(1-(\ln \gamma)^{-1}\right)+\frac{\beta^2s^{(1)}_i}{6}+O(\alpha^2(\ln \gamma)^{-2},\beta^2(\ln \gamma)^{-2}),\\
B_i^6&=\frac{\det \mM^f_6}{\det \mM^f}=-\frac{\alpha^{2}q^{(1)}_i}{6}\left(1+\frac{3}{4\ln \gamma}\right)+O(\alpha^2(\ln \gamma)^{-2}).
\end{split}
\end{equation}

\end{proof}

\begin{lemma}\label{corfficient-g}
    Coefficients $\mathcal{C}=\{C^l_i\}_{l=1}^6$ satisfy
    \begin{equation*}
    \begin{bmatrix}
        C^1_i\\
        C^2_i\\
        C^3_i\\
        C^4_i\\
        C^5_i\\
        C^6_i
    \end{bmatrix}=\begin{bmatrix}
        O(\alpha\gamma^6,\beta\gamma^6)\\
        \frac{3}{2}\gamma^4\left( -3 \alpha p_i+\beta r_i\right)+O(\alpha\gamma^6)\\
        \frac{1}{2}\left[\alpha p_i(1+9\gamma^2)+\beta r_i(-3\gamma^2)\right]+O(\alpha\gamma^4)\\
        \frac{1}{2}\left[\alpha p_i(-3)+\beta r_i(1+9\gamma^2)\right]+O(\alpha\gamma^4,\beta\gamma^4)\\
        \frac{3}{2}\left[\alpha p_i(1+9\gamma^2)+\beta r_i(-3\gamma^2)\right]+O(\alpha\gamma^4,\beta\gamma^4)\\
       \frac{1}{2}\left[\alpha p_i(-1-9\gamma^2)+\beta r_i(3\gamma^2)\right]+O(\beta\gamma^4)
    \end{bmatrix}.
\end{equation*}
\end{lemma}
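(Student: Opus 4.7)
The proof follows the template of Lemmas \ref{corfficient-e} and \ref{corfficient-f}. First I project the boundary conditions \eqref{boundary-conditions} onto the degree-two spherical-harmonic component $\mathcal{H}_2^{\mathbb{S}}$. Inspecting the expansion \eqref{PQRS-expansion}, only the $\mathring{P}$ part of $\mathring{p}_{\alpha}$ and the $R$ part of $q_{\beta^{-1}}$ contribute to $\mathcal{H}_2^{\mathbb{S}}$, since $Q$ and $S$ are odd in $z$ and hence lie in $\mathcal{H}_1^{\mathbb{S}} \oplus \mathcal{H}_3^{\mathbb{S}}$. The function $\mathring{P}(z/|z|, z/|z|)$ is homogeneous of degree zero, so its radial derivatives vanish at $|z|=\gamma$; the function $\tfrac{1}{2}R(z,z) = \tfrac{1}{2}r^2 R(\hat{z},\hat{z})$ is homogeneous of degree two, so $\partial_r^k$ at $r=1$ multiplies by $\binom{2}{k}$. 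The $Y_i$-component of the Cauchy data is therefore
\begin{equation*}
\bigl(\tfrac{\alpha}{2}p_i,\; 0,\; 0\bigr) \text{ at } |z|=\gamma, \qquad \bigl(\tfrac{\beta}{2}r_i,\; \beta r_i,\; \beta r_i\bigr) \text{ at } |z|=1.
\end{equation*}

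Substituting the ansatz $g_i(r) = \sum_l C_i^l G_l(r)$ from Lemma \ref{g} into these six Cauchy conditions yields the linear system
\begin{equation*}
\mM^{g}\begin{bmatrix} C_i^1\\ C_i^2\\ C_i^3\\ C_i^4\\ C_i^5\\ C_i^6 \end{bmatrix} = \begin{bmatrix} \tfrac{\alpha}{2} p_i \\ 0 \\ 0 \\ \tfrac{\beta}{2} r_i \\ \beta r_i \\ \beta r_i \end{bmatrix}.
\end{equation*}
I would then invert by Cramer's rule, computing $\det \mM^{g}$ and the six cofactor determinants $\det \mM^{g}_l$ obtained by replacing the $l$th column with the right-hand side. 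Since $\mM^{g}$ contains only pure powers of $\gamma$ (no logarithmic entries, in contrast with $\mM^{e}$), both $\det \mM^{g}$ and each $\det \mM^{g}_l$ are Laurent polynomials in $\gamma$, so the asymptotics as $\gamma \to 0$ are purely polynomial and the ratios $C_i^l = \det \mM^{g}_l / \det \mM^{g}$ expand cleanly to the desired order.

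\textbf{Main obstacle.} The expansion must be carried out to accuracy $O(\gamma^2)$ beyond leading, since the $(1+9\gamma^2)$ and $(-3\gamma^2)$ cross-terms in the stated formulas come from this subleading order. The leading behaviour of $\det \mM^{g}$ is controlled by the most singular entries from the first two columns (of order $\gamma^{-6}$ and $\gamma^{-4}$ respectively), and I must expand both numerator and denominator one order beyond the naive leading term before taking the quotient. Once this is done, the structure of the answer admits the interpretation that the leading interpolation is $C_i^3\cdot 1 + C_i^4\cdot r^2$, matching the natural quadratic polynomial connecting the two boundaries with $C_i^3 \approx \tfrac{1}{2}\alpha p_i$ and $C_i^4 \approx \tfrac{1}{2}\beta r_i$; the four remaining basis functions contribute only $O(\gamma^2)$ corrections, and the singular multipliers $C_i^1, C_i^2$ are suppressed by $\gamma^6$ and $\gamma^4$ respectively so that their contributions $C_i^1\gamma^{-4}, C_i^2\gamma^{-2}$ at $r=\gamma$ are harmless $O(\gamma^2)$ perturbations. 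The absence of $(\ln\gamma)^{-1}$ factors in the final answer reflects the absence of logarithmic entries in $\mM^{g}$, and this is the main reason the computation is substantially cleaner than that of Lemma \ref{corfficient-e}; the bookkeeping is tedious but entirely mechanical.
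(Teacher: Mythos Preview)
Your approach is essentially identical to the paper's own proof: set up the linear system $\mM^{g}\,\mathcal{C} = (\tfrac{\alpha}{2}p_i,\,0,\,0,\,\tfrac{\beta}{2}r_i,\,\beta r_i,\,\beta r_i)^{T}$ and solve by Cramer's rule, expanding each $\det \mM^{g}_{l}/\det \mM^{g}$ as a Laurent polynomial in $\gamma$ to the required subleading order. One minor correction to your closing heuristic: the leading part of $C_i^4$ is $\tfrac{1}{2}(-3\alpha p_i + \beta r_i)$, not $\tfrac{1}{2}\beta r_i$ alone, so the $\alpha$ and $\beta$ data already mix at order $\gamma^0$ --- this does not affect your proof strategy but is worth keeping in mind when you carry out the expansion.
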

\begin{proof}
Comparing each $Y_i$, coefficients $\{C^l_i\}_{l=1}^6$ satisfy the equation
\begin{equation}\label{eq:C}
    \mM^{g}\cdot \begin{bmatrix}
      C^1_i\\
        C^2_i\\
        C^3_i\\
        C^4_i\\
        C^5_i\\
        C^6_i
    \end{bmatrix}=\begin{bmatrix}
        \frac{\alpha}{2}p_i\\
        0\\
        0\\
        \frac{\beta}{2}r_i\\
        \beta r_i\\
        \beta r_i
    \end{bmatrix}.
\end{equation}

The determinant of $M^g$ and corresponding cofactors are given by
\begin{equation}\label{determinant-of-M-e}
\begin{split}
\det \mM^g=&\quad -256\gamma^9 + 2304\gamma^7 - 9216\gamma^5 + 21504\gamma^3 - 32256\gamma + \frac{32256}{\gamma} \\
&\quad - \frac{21504}{\gamma^3} + \frac{9216}{\gamma^5} - \frac{2304}{\gamma^7} + \frac{256}{\gamma^9},
\end{split}
\end{equation}
\begin{equation}
\begin{split}
\det \mM^g_1=& 384\alpha p_i \frac{ \gamma^{10} - 3\gamma^8 + 2\gamma^6 + 2\gamma^4 - 3\gamma^2 + 1}{\gamma^3} \\
      &128\beta r_i \frac{- \gamma^{12}  + 9\gamma^8- 16\gamma^6  + 9\gamma^4 - 1
  }{\gamma^3} 
\end{split}
\end{equation}
\begin{equation}
\begin{split}
\det \mM^g_2=& 384\alpha p_i \frac{
      -3\gamma^{12} + 8\gamma^{10} - 5\gamma^8 - 5\gamma^4 + 8\gamma^2 - 3}{\gamma^5} \\
      &384 \beta r_i\frac{ + \gamma^{14} - 6\gamma^{10}+ 5\gamma^8 + 5\gamma^6 - 6\gamma^4  +1  }{\gamma^5} 
\end{split}
\end{equation}
\begin{equation}
\begin{split}
\det \mM^g_3=& 128\alpha p_i \frac{
    9\gamma^{16} - 18\gamma^{14} + 10\gamma^{12} - 36\gamma^{10} + 90\gamma^8  - 74\gamma^6 + 18\gamma^4 + 1}{\gamma^9} \\
      &384 \beta r_i\frac{- \gamma^{18}  + 16\gamma^{12}  - 30\gamma^{10} + 16\gamma^8 - \gamma^2   }{\gamma^9} 
\end{split}
\end{equation}
\begin{equation}
\begin{split}
\det \mM^g_4=& 384\alpha p_i\frac{
      -\gamma^{16} + 16\gamma^{10} - 30\gamma^8 + 16\gamma^6 - 1}{\gamma^9} \\
      &128\beta r_i \frac{+ 9\gamma^{16}  - 18\gamma^{14} + 10\gamma^{12} - 36\gamma^{10} + 90\gamma^8  - 74\gamma^6  + 18\gamma^4  + 1  }{\gamma^9} 
\end{split}
\end{equation}

\begin{equation}
\begin{split}
\det \mM^g_5=&384\alpha p_i \frac{
      \gamma^{14} - 6\gamma^{10} + 5\gamma^8 + 5\gamma^6 - 6\gamma^4  + 1}{\gamma^9} \\
      &384 \beta r_i \frac{- 3\gamma^{14} + 8\gamma^{12} - 5\gamma^{10} - 5\gamma^6 + 8\gamma^4 - 3\gamma^2 }{\gamma^9} 
\end{split}
\end{equation}
\begin{equation}
    \begin{split}
        \det \mM^g_6=&128\alpha p_i \frac{
      -\gamma^{12}  + 9\gamma^8  - 16\gamma^6 + 9\gamma^4 - 1}{\gamma^9} \\
      &384\beta r_i\frac{+ \gamma^{12} - 3\gamma^{10} + 2\gamma^8 + 2\gamma^6 - 3\gamma^4 + \gamma^2  }{\gamma^9}
    \end{split}
\end{equation}
Asymptotically, we know that
\begin{equation}
\begin{split}
C_i^1&=\frac{\det \mM^g_1}{\det \mM^g}=O(\alpha\gamma^6, \beta\gamma^6),\\
C_i^2&=\frac{\det \mM^g_2}{\det \mM^g}=\frac{3}{2}\gamma^4\left( -3 \alpha p_i+\beta r_i\right)+O(\alpha\gamma^6),\\
C_i^3&=\frac{\det \mM^g_3}{\det \mM^g}=\frac{1}{2}\left[\alpha p_i(1+9\gamma^2)+\beta r_i(-3\gamma^2)\right]+O(\alpha\gamma^4),\\
C_i^4&=\frac{\det \mM^g_4}{\det \mM^g}=\frac{1}{2}\left[\alpha p_i(-3)+\beta r_i(1+9\gamma^2)\right]+O(\alpha\gamma^4,\beta\gamma^4),\\
C_i^5&=\frac{\det \mM^g_5}{\det \mM^g}=\frac{3}{2}\left[\alpha p_i(1+9\gamma^2)+\beta r_i(-3\gamma^2)\right]+O(\alpha\gamma^4, \beta\gamma^4),\\
C_i^6&=\frac{\det \mM^g_6}{\det \mM^g}=\frac{1}{2}\left[\alpha p_i(-1-9\gamma^2)+\beta r_i(3\gamma^2)\right]+O(\alpha\gamma^4,\beta\gamma^4).
\end{split}
\end{equation}

\end{proof}

\begin{lemma}\label{corfficient-h}
    Coefficients $\{D^l_i\}_{l=1}^6$ satisfy
    \begin{equation*}
    \begin{bmatrix}
        D^1_i\\
        D^2_i\\
        D^3_i\\
        D^4_i\\
        D^5_i\\
        D^6_i
    \end{bmatrix}=\begin{bmatrix}
        \left(3\alpha^2q^{(3)}_i-\frac{1}{2}\beta^2 s^{(3)}_i\right)\gamma^8+O(\alpha^2\gamma^{10})\\
       \left(-8\alpha^2q^{(3)}_i+\frac{4}{3}\beta^2 s^{(3)}_i\right)\gamma^6+O(\alpha^2\gamma^{8})\\
        \frac{1}{6}\alpha^2 q^{(3)}_i\left(1+36\gamma^4\right)-\beta^2 s^{(3)}_i\gamma^4+O(\alpha^2\gamma^6 )\\
        \left(-\alpha^2 q^{(3)}_i+\frac{1}{6}\beta^2 s^{(3)}_i\right)\left(1+36\gamma^4\right)+O(\alpha^2\gamma^6 )\\
        \frac{4}{3}\alpha^2 q^{(3)}_i\left(1+36\gamma^4\right)-8\beta^2 s^{(3)}_i\gamma^4+O(\alpha^2\gamma^6,\beta^2\gamma^6 )\\
       -\frac{1}{2}\alpha^2 q^{(3)}_i\left(1+36\gamma^4\right)+3\beta^2 s^{(3)}_i\gamma^4+O(\alpha^2\gamma^6,\beta^2\gamma^6 )
    \end{bmatrix}.
\end{equation*}
\end{lemma}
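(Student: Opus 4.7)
The plan is to mirror the argument of the preceding three lemmas. First I would extract the $W_i$-components from the boundary data written out at the start of Section~\ref{coefficients}: reading off the coefficient of $W_i$ on both spheres $\{|z|=\gamma\}$ and $\{|z|=1\}$, the ansatz $w\supset h_i(r)W_i$ with $h_i(r)=\sum_l D_i^l H_l(r)$ must satisfy the $6\times 6$ linear system
\begin{equation*}
\mM^{h}\cdot \begin{bmatrix} D_i^1\\ D_i^2\\ D_i^3\\ D_i^4\\ D_i^5\\ D_i^6 \end{bmatrix}
=\begin{bmatrix}
\dfrac{\alpha^{2}}{6\gamma}\,q^{(3)}_i\\[2pt]
-\dfrac{\alpha^{2}}{6\gamma^{2}}\,q^{(3)}_i\\[2pt]
\dfrac{\alpha^{2}}{3\gamma^{3}}\,q^{(3)}_i\\[2pt]
\dfrac{\beta^{2}}{6}\,s^{(3)}_i\\[2pt]
\dfrac{\beta^{2}}{2}\,s^{(3)}_i\\[2pt]
\beta^{2}\,s^{(3)}_i
\end{bmatrix},
\end{equation*}
where $\mM^{h}$ is the explicit matrix in \eqref{Mh}. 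This system is the exact analogue of the $X_i$- and $Y_i$-equations in Lemmas~\ref{corfficient-f} and~\ref{corfficient-g}, just with the six radial modes $r^{-5},r^{-3},r^{-1},r^{3},r^{5},r^{7}$ replacing the previous ones.

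Next I would apply Cramer's rule exactly as before. Concretely, I would compute $\det \mM^{h}$ and the six minors $\det \mM^{h}_l$ obtained by replacing the $l$-th column of $\mM^{h}$ by the right-hand side above. Since every row of $\mM^{h}$ is a Laurent polynomial in $\gamma$ whose exponents differ by $2$, cofactor expansion along the bottom three rows (all entries constant in $\gamma$) reduces each determinant to an alternating sum of Vandermonde-like minors; this makes the computation mechanical and parallel to the calculation that produced \eqref{determinant-of-M-e} in the $Y_i$-case. The leading behaviour of $\det \mM^h$ as $\gamma\to 0$ is driven by the $\gamma^{-9}$ entries in column one and the constant entries in columns four through six, so $\det \mM^{h}\sim c\,\gamma^{-9}$ for an explicit nonzero constant $c$; this gives a clean asymptotic scale against which to measure each $\det \mM^h_l$.

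Finally I would extract the leading asymptotics. Column by column, the dominant contributions come from the terms involving either $q^{(3)}_i$ or $s^{(3)}_i$, and the subleading corrections are $O(\gamma^2)$ times the leading term because the matrix entries jump by powers of $\gamma^{2}$. Dividing $\det \mM^h_l$ by $\det \mM^h$ and keeping only terms up to relative order $\gamma^4$ in each component then yields the six stated expressions, with the powers $\gamma^{8},\gamma^{6},1+36\gamma^{4},\dots$ tracking exactly which rows of the system contribute at leading order (the top three, coming from $|z|=\gamma$, give the $\alpha^2 q_i^{(3)}$ coefficients weighted by $\gamma^{-3}\cdots\gamma^{-5}$, while the bottom three yield the $\beta^2 s_i^{(3)}$ coefficients at order $1$).

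The main obstacle is bookkeeping rather than mathematics: one must carry out all six cofactor expansions without sign or power-of-$\gamma$ errors, and then carefully identify which monomials in each quotient survive at the claimed accuracy. A useful sanity check along the way is that, taking $\beta=0$, the resulting $D_i^l$'s must reproduce the triharmonic extension of the homogeneous cubic $\mathring{p}_\alpha$ on the outer boundary, while taking $\alpha=0$ must reproduce the extension of the cubic part of $q_{\beta^{-1}}$ on the inner boundary; verifying these two limits removes essentially all plausible arithmetic mistakes and confirms the leading constants $\tfrac{1}{6}$, $-1$, $\tfrac{4}{3}$, $-\tfrac{1}{2}$ appearing in components three through six.
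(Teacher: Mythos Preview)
Your approach is essentially identical to the paper's: set up the $6\times 6$ system $\mM^h\cdot(D_i^l)=\text{RHS}$ from the $W_i$-components of the boundary data, solve by Cramer's rule, and expand each ratio $\det\mM^h_l/\det\mM^h$ as $\gamma\to 0$. One small slip: the leading behaviour of $\det\mM^h$ is $\sim 256\,\gamma^{-12}$, not $c\,\gamma^{-9}$ (there are no $\gamma^{-9}$ entries in column one; the most singular term is $30\gamma^{-7}$ in position $(3,1)$, and the dominant contribution comes from the top-left $3\times 3$ block times the bottom-right $3\times 3$ block), but this does not affect the method or the final asymptotics.
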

\begin{proof}
Comparing each $W_i$, coefficients $\{D^l_i\}_{l=1}^6$ satisfy the equation
\begin{equation}\label{eq:D}
    \mM^{h}\cdot \begin{bmatrix}
      D^1_i\\
        D^2_i\\
        D^3_i\\
        D^4_i\\
        D^5_i\\
        D^6_i
    \end{bmatrix}=\begin{bmatrix}
       \frac{\alpha^2}{6\gamma}q^{(3)}_i\\
        -\frac{\alpha^2}{6\gamma^2}q^{(3)}_i\\
        \frac{\alpha^2}{3\gamma^3}q^{(3)}_i\\
        \frac{\beta^2}{6}s^{(3)}_i\\
        \frac{\beta^2}{2}s^{(3)}_i\\
       \beta^2s^{(3)}_i
    \end{bmatrix}.
\end{equation}

The determinant of $M^h$ and corresponding cofactors are given by
\begin{equation}\label{determinant-of-M-e}
\begin{split}
\det \mM^h=& \frac{256(-\gamma^{24} + 36\gamma^{20} - 160\gamma^{18} + 315\gamma^{16} - 288\gamma^{14} + 288\gamma^{10} - 315\gamma^8 + 160\gamma^6 - 36\gamma^4 + 1)}{\gamma^{12}},
\end{split}
\end{equation}
\begin{equation}
\begin{split}
\det \mM^h_1=&-256\alpha^2 q^{(3)}_i \frac{ -3\gamma^{12} + 8\gamma^{10}  - 5\gamma^8  - 5\gamma^4  + 8\gamma^2 - 3}{\gamma^4}\\
&-128\beta^2 s^{(3)}_i\frac{+ \gamma^{16} - 16\gamma^{10}+ 30\gamma^8 - 16\gamma^6 + 1 }{\gamma^4},
\end{split}
\end{equation}
\begin{equation}
\begin{split}
\det \mM^h_2=&-3072\alpha^2 q^{(3)}_i \frac{
      2\gamma^{14} - 5\gamma^{12} + 3\gamma^{10} + 3\gamma^4 - 5\gamma^2  + 2}{3\gamma^6}\\
      &-1024\beta^2 s^{(3)}_i\frac{- \gamma^{18} + 10\gamma^{12}  - 9\gamma^{10} - 9\gamma^8  + 10\gamma^6  - 1}{3\gamma^6},
\end{split}
\end{equation}
\begin{equation}
\begin{split}
\det \mM^h_3=&128\alpha^2 q^{(3)}_i \frac{
    36\gamma^{20} - 80\gamma^{18} + 45\gamma^{16}  - 100\gamma^{12} + 288\gamma^{10}  - 270\gamma^8 + 80\gamma^6  + 1}{3\gamma^{12}}\\
    & 768\beta^2 s^{(3)}_i\frac{- \gamma^{24}  + 25\gamma^{16} - 48\gamma^{14} + 25\gamma^{12} - \gamma^4
  }{3\gamma^{12}},
\end{split}
\end{equation}
\begin{equation}
\begin{split}
\det \mM^h_4=& 768\alpha^2 q^{(3)}_i\frac{
      -\gamma^{20}  + 25\gamma^{12} - 48\gamma^{10} +25\gamma^8  - 1 }{3\gamma^{12}}  \\
      &128\beta^2 s^{(3)}_i\frac{+ 36\gamma^{20}  - 80\gamma^{18}  + 45\gamma^{16}  - 100\gamma^{12}  + 288\gamma^{10}  - 270\gamma^8  + 80\gamma^6  + 1
  }{3\gamma^{12}},  
\end{split}
\end{equation}

\begin{equation}
\begin{split}
\det \mM^h_5=&1024\alpha^2 q^{(3)}_i\frac{
      \gamma^{18} - 10\gamma^{12}  + 9\gamma^{10} + 9\gamma^8  - 10\gamma^6  + 1 }{3\gamma^{12}} \\
      &3072\beta^2 s^{(3)}_i \frac{- 2\gamma^{18}  + 5\gamma^{16} - 3\gamma^{14} - 3\gamma^8  + 5\gamma^6  - 2\gamma^4  }{3\gamma^{12}},
\end{split}
\end{equation}
\begin{equation}
    \begin{split}
        \det \mM^h_6=&128\alpha^2 q^{(3)}_i \frac{
      -\gamma^{16}  + 16\gamma^{10} - 30\gamma^8  + 16\gamma^6  - 1}{\gamma^{12}}\\
      &256\beta^2 s^{(3)}_i \frac{+ 3\gamma^{16}  - 8\gamma^{14}  + 5\gamma^{12}   + 5\gamma^8  - 8\gamma^6  + 3\gamma^4  }{\gamma^{12}},
    \end{split}
\end{equation}
Asymptotically, we know that
\begin{equation}
\begin{split}
D_i^1&=\frac{\det \mM^h_1}{\det \mM^h}=\left(3\alpha^2q^{(3)}_i-\frac{1}{2}\beta^2 s^{(3)}_i\right)\gamma^8+O(\alpha^2\gamma^{10}),\\
D_i^2&=\frac{\det \mM^h_2}{\det \mM^h}=\left(-8\alpha^2q^{(3)}_i+\frac{4}{3}\beta^2 s^{(3)}_i\right)\gamma^6+O(\alpha^2\gamma^{8}),\\
D_i^3&=\frac{\det \mM^h_3}{\det \mM^h}=\frac{1}{6}\alpha^2 q^{(3)}_i\left(1+36\gamma^4\right)-\beta^2 s^{(3)}_i\gamma^4+O(\alpha^2\gamma^6),\\
D_i^4&=\frac{\det \mM^h_4}{\det \mM^h}=\left(-\alpha^2 q^{(3)}_i+\frac{1}{6}\beta^2 s^{(3)}_i\right)\left(1+36\gamma^4\right)+O(\beta^2\gamma^6),\\
D_i^5&=\frac{\det \mM^h_5}{\det \mM^h}=\frac{4}{3}\alpha^2 q^{(3)}_i\left(1+36\gamma^4\right)-8\beta^2 s^{(3)}_i\gamma^4+O(\alpha^2\gamma^6,\beta^2\gamma^6 ),\\
D_i^6&=\frac{\det \mM^h_6}{\det \mM^h}=-\frac{1}{2}\alpha^2 q^{(3)}_i\left(1+36\gamma^4\right)+3\beta^2 s^{(3)}_i\gamma^4+O(\alpha^2\gamma^6,\beta^2\gamma^6 ).
\end{split}
\end{equation}

\end{proof}

\section{Bilinear forms}\label{bilinear}
For functions $u,v:\bB_{\sigma}(0)\backslash \bB_{\tau}(0)\to \bR^{n-4}$, we consider the bilinear form
\begin{equation*}
    \mB_{\sigma, \tau}(u,v):=\int_{\bB_{\sigma}(0)\backslash \bB_{\tau}(0)} \langle \nabla \Delta_0 u, \nabla \Delta_0 v\rangle \dvol.
\end{equation*}
We start by computing $\mB_{\sigma, \tau}(\cdot,\cdot)$ with respect to the bases in Lemma \ref{e}, \ref{f}, \ref{g} and \ref{h}.
\begin{lemma}\label{energy-e}
Let $E(r)$ be the vector defined in Lemma \ref{e}. Then
    \begin{equation}
\begin{split}
    &\int_{\bB_{\sigma}(0)\backslash \bB_{\tau}(0)}\langle \nabla \Delta_0 (E(r)), \nabla \Delta_0(E(r))\rangle \dvol\\
=&16\Vol(\bS^3)\begin{bmatrix}
0 & 0 & 0 & 0 & 0& 0\\
0 & 0 & 0 & 0 & 0& 0\\
0 & 0 & -\frac{r^{-2}}{2} & 0 & -2\ln r& -6r^2\\
0 & 0 & 0 & 0 & 0& 0\\
0 & 0 & -2\ln r & 0 & 2r^2 & 6r^4\\
0 & 0 & -6r^{2} & 0 & 6r^4 & 24r^6
\end{bmatrix}\Bigg|^{r=\sigma}_{r=\tau}.
    \end{split}
\end{equation}
\end{lemma}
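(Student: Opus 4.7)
The plan is a direct computation, leveraging the matrix presentation of $\Delta_{0}$ on $E(r)$ given in \eqref{eLaplace}. First I would read off from \eqref{eLaplace} the values
\begin{equation*}
\Delta_{0} e_{1}=0,\quad \Delta_{0} e_{2}=0,\quad \Delta_{0} e_{3}=2r^{-2},\quad \Delta_{0} e_{4}=8,\quad \Delta_{0} e_{5}=6+8\ln r,\quad \Delta_{0} e_{6}=24 r^{2},
\end{equation*}
which can be double-checked against $\Delta_{0} f(r)=f''(r)+\tfrac{3}{r}f'(r)$. The key observation is that $\Delta_{0} e_{1}$, $\Delta_{0} e_{2}$ vanish and $\Delta_{0} e_{4}$ is a constant, so their gradients vanish identically. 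Consequently, the only pairs $(i,j)$ that can contribute to the matrix are those with $i,j\in\{3,5,6\}$, which matches the sparsity pattern in the claimed formula.

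For the remaining three basis functions the gradients are purely radial:
\begin{equation*}
\nabla\Delta_{0} e_{3}=-4r^{-3}\,\partial_{r},\qquad \nabla\Delta_{0} e_{5}=\tfrac{8}{r}\,\partial_{r},\qquad \nabla\Delta_{0} e_{6}=48 r\,\partial_{r}.
\end{equation*}
Using the polar decomposition $\dvol=r^{3}\,dr\,dA_{\bS^{3}}$ on $\bB_{\sigma}\setminus \bB_{\tau}$, for any two radial functions $\phi_{i}(r)$, $\phi_{j}(r)$ one has
\begin{equation*}
\int_{\bB_{\sigma}\setminus \bB_{\tau}}\langle \nabla \phi_{i}, \nabla \phi_{j}\rangle \dvol=\Vol(\bS^{3})\int_{\tau}^{\sigma}\phi_{i}'(r)\phi_{j}'(r)\,r^{3}\,dr,
\end{equation*}
which reduces each nonzero entry of the target matrix to an elementary one-variable integration of a power of $r$. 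For instance, the $(3,3)$ entry becomes $16\,\Vol(\bS^{3})\int r^{-3}\,dr=-8\,\Vol(\bS^{3})\,r^{-2}\big|^{\sigma}_{\tau}$, matching the claimed $-\tfrac{1}{2}r^{-2}$ after factoring out $16\,\Vol(\bS^{3})$. The other five pairs $(3,5)$, $(3,6)$, $(5,5)$, $(5,6)$, $(6,6)$ are handled identically and produce the entries $-2\ln r$, $-6r^{2}$, $2r^{2}$, $6r^{4}$, $24r^{6}$ respectively.

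There is no real obstacle here: the proof is essentially bookkeeping once \eqref{eLaplace} is in hand. The only thing to be careful about is the sign convention in computing $\nabla\Delta_{0} e_{3}=\nabla(2r^{-2})$ and ensuring antiderivatives are evaluated as $\big|^{r=\sigma}_{r=\tau}$ rather than $\big|^{r=\tau}_{r=\sigma}$, so that the matrix in the statement is correctly interpreted as the antiderivative evaluated at the endpoints. Pulling out the common factor $16\,\Vol(\bS^{3})$ then yields the stated matrix identity. The analogous computations for the bases $F$, $G$, $H$ of Lemmas \ref{f}, \ref{g}, \ref{h} will proceed along the same lines, using the radial part of $\Delta_{0}$ together with the eigenvalues $-3,-8,-15$ of $\Delta_{\bS^{3}}$ on $\mathcal{H}^{\bS}_{1}$, $\mathcal{H}^{\bS}_{2}$, $\mathcal{H}^{\bS}_{3}$.
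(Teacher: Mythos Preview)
Your proposal is correct and follows essentially the same route as the paper: both compute $\Delta_{0}E(r)$ from \eqref{eLaplace}, differentiate radially, and integrate the resulting products against $r^{3}\,dr$ over $[\tau,\sigma]$. The paper packages the computation in matrix form (computing $\partial_{r}E$ and $\partial_{r}\Delta_{0}E$ as matrix actions and then forming the outer product), whereas you carry it out entry by entry, but the substance is identical.
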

\begin{proof}
    From Lemma \ref{e}, we already know that the Laplace operator $\Delta_0$ acts on the six-dimensional space $E(r)$ as the matrix:
\begin{equation}
    \Delta_0(E(r))=E(r)\cdot 
    \begin{bmatrix}
0 & 0 & 2 & 0 & 0& 0\\
0 & 0 & 0 & 8 & 6& 0\\
0 & 0 & 0 & 0 & 8& 0\\
0 & 0 & 0 & 0 & 0& 24\\
0 & 0 & 0 & 0 & 0& 0\\
0 & 0 & 0 & 0 & 0& 0
\end{bmatrix}.
\end{equation}
Note that acting on radial functions, we have $\nabla=\frac{x}{r}\frac{\partial }{\partial r}$, and
\begin{equation*}
        \partial_r E(r)= r^{-1}E(r)\cdot\begin{bmatrix}
-2 & 0 & 0 & 0 & 0& 0\\
0 & 0 & 1 & 0 & 0& 0\\
0 & 0 & 0 & 0 & 0& 0\\
0 & 0 & 0 & 2 & 1& 0\\
0 & 0 & 0 & 0 & 2& 0\\
0 & 0 & 0 & 0 & 0& 4
        \end{bmatrix}.
    \end{equation*}

Consequently, we have
\begin{equation}
    \partial_r \circ \Delta_0(E(r))=r^{-1}\cdot E(r)\cdot 
    \begin{bmatrix}
0 & 0 & -4 & 0 & 0& 0\\
0 & 0 & 0 & 0 & 8& 0\\
0 & 0 & 0 & 0 & 0& 0\\
0 & 0 & 0 & 0 & 0& 48\\
0 & 0 & 0 & 0 & 0& 0\\
0 & 0 & 0 & 0 & 0& 0
\end{bmatrix}.
\end{equation}

Therefore,
\begin{equation}
\begin{split}
    &\int_{\bB_{\sigma}(0)\backslash \bB_{\tau}(0)}\langle \nabla \Delta_0 (E(r)), \nabla \Delta_0(E(r))\rangle \dvol\\
    =&16\Vol(\bS^3)\int^{\sigma}_{\tau}  \begin{bmatrix}
0 & 0 & 0 & 0 & 0& 0\\
0 & 0 & 0 & 0 & 0& 0\\
0 & 0 & r^{-4} & 0 & -2r^{-2}& -12\\
0 & 0 & 0 & 0 & 0& 0\\
0 & 0 & -2r^{-2} & 0 & 4& 24r^2\\
0 & 0 & -12 & 0 & 24r^2& 144r^4
\end{bmatrix}rdr\\
=&16\Vol(\bS^3)\begin{bmatrix}
0 & 0 & 0 & 0 & 0& 0\\
0 & 0 & 0 & 0 & 0& 0\\
0 & 0 & -\frac{r^{-2}}{2} & 0 & -2\ln r& -6r^2\\
0 & 0 & 0 & 0 & 0& 0\\
0 & 0 & -2\ln r & 0 & 2r^2 & 6r^4\\
0 & 0 & -6r^{2} & 0 & 6r^4 & 24r^6
\end{bmatrix}\Bigg|^{r=\sigma}_{r=\tau}.
    \end{split}
\end{equation}
\end{proof}

To compute $\mB_{\sigma, \tau}(\cdot,\cdot)$ with respect to $F(r)\cdot X_i$, $G(r)\cdot Y_i$ and $H(r)\cdot W_i$, we apply the integration by parts to obtain
\begin{equation}\label{integration-by-parts}
\begin{split}
    \mB_{\sigma, \tau}(u,v)=&-\int_{B_{\sigma}(0)\backslash B_{\tau}(0)} \langle  \Delta_0 u, \Delta^2_0 v\rangle \dvol\\
    &+\int_{\partial B_{\sigma}}\langle \Delta_0 u, \partial_r \Delta_0 v\rangle dA-\int_{\partial B_{\tau}}\langle \Delta_0 u, \partial_r \Delta_0 v\rangle dA.
    \end{split}
\end{equation}

\begin{lemma}\label{energy-f}
    Let $F(r)$ be the vector defined in Lemma \ref{f}. Then for each $X_i\in \mathcal{H}^{\bS}_{1}$, we have
\begin{equation}
\begin{split}
    &\int_{\bB_{\sigma}(0)\backslash \bB_{\tau}(0)}\langle \nabla \Delta_0 (F(r)\cdot X_i), \nabla \Delta_0(F(r)\cdot X_i)\rangle \dvol\\
    =&16 \Vol(\bS^3) \begin{bmatrix}
0 & 0 & 0 & 0 & 0& 0\\
0 & -3r^{-4} & 0 & 3r^{-2} & 0& 24r^2\\
0 & 0 & 0& 0& 0 & 0\\
0 & 3r^{-2} & 0 & {4\ln r} & 3r^2& 0\\
0 & 0 & 0 & 3r^2 & 9r^4& 24r^6\\
0 & 24r^2 & 0 & 0 & 24r^6& 96r^8
\end{bmatrix}\Bigg|_{r=\tau}^{r=\sigma}.
\end{split}
\end{equation}
    
\end{lemma}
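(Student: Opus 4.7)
The plan is to compute $\mB_{\sigma,\tau}(F_k X_i, F_l X_i)$ for each pair $(k,l)$ via a Fubini/polar-coordinate reduction, exploiting the eigenfunction property $\Delta_{\bS^3}X_i = -3X_i$.

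First, since $X_i$ is a spherical harmonic of degree one, for any radial function $h(r)$ I have
\[
\Delta_0\bigl(h(r)X_i\bigr) = (L_1 h)(r)\, X_i, \qquad L_1 h := h'' + \tfrac{3}{r}h' - \tfrac{3}{r^2}h.
\]
Equivalently $\Delta_0(r^k X_i) = (k-1)(k+3)r^{k-2}X_i$, the formula that already appeared in the proof of Lemma~\ref{f}. Applying $L_1$ to the six basis functions yields
\[
L_1 F_1 = 0,\quad L_1 F_2 = -4 r^{-3},\quad L_1 F_3 = 0,
\]
\[
L_1 F_4 = 4 r^{-1},\quad L_1 F_5 = 12 r,\quad L_1 F_6 = 32 r^{3}.
\]
In particular $\Delta_0(F_1 X_i) = \Delta_0(F_3 X_i) = 0$, so the first and third rows and columns of the claimed matrix vanish at once, matching the pattern of zeros in the statement.

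Next, I decompose the Euclidean gradient in polar coordinates: for any function of the form $h(r)X_i(\theta)$ on $\bR^4\setminus\{0\}$,
\[
\nabla(hX_i) = h'(r) X_i\,\hat r + \frac{h(r)}{r}\nabla_{\bS^3}X_i,
\]
and these two summands are pointwise orthogonal. Taking the inner product of two such gradients with $h=L_1 F_k$ and $h=L_1 F_l$, integrating first over $\bS^3$ using the identities
\[
\int_{\bS^3} X_i^2\, dA = \Vol(\bS^3), \qquad \int_{\bS^3}|\nabla_{\bS^3}X_i|^2\, dA = 3\,\Vol(\bS^3)
\]
(the second follows from the first via integration by parts on $\bS^3$ and the eigenvalue $-3$ for $X_i$), and then over $r$ with the volume factor $r^3$, I reduce the bilinear form to a single radial integral:
\[
\mB_{\sigma,\tau}(F_k X_i, F_l X_i) = \Vol(\bS^3)\int_\tau^\sigma\Bigl[(L_1 F_k)'(L_1 F_l)'\,r^3 + 3(L_1 F_k)(L_1 F_l)\,r\Bigr]\, dr.
\]

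The remaining task is to substitute the explicit $L_1 F_\ell$ above and evaluate the resulting one-variable integrals of pure monomials (plus the single $\int r^{-1}\, dr = \ln r$ arising in the diagonal entry $k=l=4$). The antiderivatives, taken at $r=\sigma$ minus $r=\tau$, assemble into the announced $6\times 6$ matrix, and symmetry in $(k,l)$ is automatic from the symmetry of $\mB_{\sigma,\tau}$. The only obstacle is clerical: keeping track of which of the twenty-one upper-triangular entries is nonzero and recording the correct antiderivatives; there is no conceptual difficulty beyond the product decomposition and the two spherical integrals above.
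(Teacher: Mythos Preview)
Your approach is correct and gives the right matrix entries (I checked several, including the $\ln r$ diagonal entry at $(4,4)$), but it differs from the paper's argument. The paper does not evaluate $|\nabla\Delta_0(F_k X_i)|^2$ directly; instead it applies the integration-by-parts identity
\[
\mB_{\sigma,\tau}(u,v)=-\int_{\bB_\sigma\setminus\bB_\tau}\langle\Delta_0 u,\Delta_0^2 v\rangle\,\dvol+\int_{\partial\bB_\sigma}-\int_{\partial\bB_\tau}\langle\Delta_0 u,\partial_r\Delta_0 v\rangle\,dA,
\]
computes the matrices of $\Delta_0$, $\Delta_0^2$, and $\partial_r\circ\Delta_0$ acting on the six-dimensional space $\vspan\{F_\ell X_i\}$, and then adds the resulting ``volume'' and ``boundary'' matrices. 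Your polar decomposition $\nabla(hX_i)=h'X_i\,\hat r+\tfrac{h}{r}\nabla_{\bS^3}X_i$ together with the two spherical integrals $\int X_i^2=\Vol(\bS^3)$ and $\int|\nabla_{\bS^3}X_i|^2=3\Vol(\bS^3)$ bypasses that extra integration by parts entirely and reduces each entry to a single radial monomial integral. This is more elementary and avoids tracking $\Delta_0^2$ and separate boundary contributions; the paper's matrix bookkeeping, on the other hand, is more uniform across Lemmas~\ref{energy-e}--\ref{energy-h} and reuses the operator matrices already recorded in Section~\ref{formal-solution}. One minor remark: since rows and columns $1$ and $3$ vanish, only the ten upper-triangular entries of the $4\times4$ block indexed by $\{2,4,5,6\}$ need to be computed, not twenty-one.
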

\begin{proof}
    From Lemma \ref{f}, we already know that 
    the Laplace operator $\Delta_0$ acts on the six-dimensional space $F(r)\cdot X_i$ as the matrix:
\begin{equation}
    \Delta_0(F(r)\cdot X_i)=F(r)\cdot X_i\cdot 
    \begin{bmatrix}
0 & -4 & 0 & 0 & 0& 0\\
0 & 0 & 0 & 4 & 0& 0\\
0 & 0 & 0 & 0 & 12& 0\\
0 & 0 & 0 & 0 & 0& 0\\
0 & 0 & 0 & 0 & 0& 32\\
0 & 0 & 0 & 0 & 0& 0
\end{bmatrix},
\end{equation}
and
\begin{equation*}
        \partial_r \left(F(r)\cdot X_i\right)=r^{-1}F(r)\cdot X_i\cdot\begin{bmatrix}
-3 & 0 & 0 & 0 & 0& 0\\
0 & -1 & 0 & 0 & 0& 0\\
0 & 0 & 1 & 1 & 0& 0\\
0 & 0 & 0 & 1 & 0& 0\\
0 & 0 & 0 & 0 & 3& 0\\
0 & 0 & 0 & 0 & 0& 5
        \end{bmatrix}.
    \end{equation*}

The bi-Laplace operator $\Delta^2_0$ acts on the six-dimensional space $F(r)$ as the matrix:
\begin{equation}
    \Delta^2_0(F(r)\cdot X_i)=F(r)\cdot X_i\cdot 
    \begin{bmatrix}
0 & 0 & 0 & -16 & 0& 0\\
0 & 0 & 0 & 0 & 0& 0\\
0 & 0 & 0 & 0 & 0& 384\\
0 & 0 & 0 & 0 & 0& 0\\
0 & 0 & 0 & 0 & 0& 0\\
0 & 0 & 0 & 0 & 0& 0
\end{bmatrix}.
\end{equation}

Therefore, we have
\begin{equation}
\begin{split}
    &-\int_{\bB_{\sigma}(0)\backslash \bB_{\tau}(0)}\langle \Delta_0 (F(r)\cdot X_i), \Delta^2_0(F(r)\cdot X_i)\rangle \dvol\\
=&-64\int_{\bS^3} X^2_i\dvol_{g_c}\int_{\tau}^{\sigma}\begin{bmatrix}
0 & 0 & 0 & 0 & 0& 0\\
0 & 0 & 0 & r^{-6} & 0& -24r^{-2}\\
0 & 0 & 0 & 0 & 0& 0\\
0 & 0 & 0 & -r^{-4} & 0& 24\\
0 & 0 & 0 & -3r^{-2} & 0& 72r^2\\
0 & 0 & 0 & -8 & 0 & 192r^4
\end{bmatrix}r^3dr\\
=&-64\Vol(\bS^3) \begin{bmatrix}
0 & 0 & 0 & 0 & 0& 0\\
0 & 0 & 0 & -\frac{r^{-2}}{2} & 0& -12r^2\\
0 & 0 & 0 & 0 & 0& 0\\
0 & 0 & 0 & -\ln r & 0& 6r^4\\
0 & 0 & 0 & -\frac{3}{2}r^2 & 0& 12r^6\\
0 & 0 & 0 & -2r^4 & 0 & 24r^8
\end{bmatrix}\Bigg|_{r=\tau}^{r=\sigma}.
    \end{split}
\end{equation}
Besides,
\begin{equation}
    \partial_r \circ \Delta_0(F(r)\cdot X_i)=r^{-1} F(r)\cdot X_i\cdot 
    \begin{bmatrix}
0 & 12 & 0 & 0 & 0& 0\\
0 & 0 & 0 & -4 & 0& 0\\
0 & 0 & 0 & 0 & 12& 0\\
0 & 0 & 0 & 0 & 0& 0\\
0 & 0 & 0 & 0 & 0& 96\\
0 & 0 & 0 & 0 & 0& 0
\end{bmatrix}.
\end{equation}
We have
\begin{equation}
\begin{split}
    &\int_{\partial \bB_{\sigma}(0)\backslash \partial \bB_{\tau}(0)} \langle \Delta_0(F(r)\cdot X_i), \partial_r \circ \Delta_0(F(r)\cdot X_i) \rangle dA\\
    =&16 \Vol(\bS^3) \begin{bmatrix}
0 & 0 & 0 & 0 & 0& 0\\
0 & -3r^{-4} & 0 & r^{-2} & -3& -24r^2\\
0 & 0 & 0& 0& 0 & 0\\
0 & 3r^{-2} & 0 & -1 & 3r^2& 24r^4\\
0 & 9 & 0 & -3r^2 & 9r^4& 72r^6\\
0 & 24r^2 & 0 & -8r^4 & 24r^6& 192r^8
\end{bmatrix}\Big|_{r=\tau}^{r=\sigma}.
    \end{split}
\end{equation}
Combining them together yields
\begin{equation}
\begin{split}
    &\int_{\bB_{\sigma}(0)\backslash \bB_{\tau}(0)}\langle \nabla \Delta_0 (F(r)\cdot X_i), \nabla \Delta_0(F(r)\cdot X_i)\rangle \dvol\\
    =&16 \Vol(\bS^3) \begin{bmatrix}
0 & 0 & 0 & 0 & 0& 0\\
0 & -3r^{-4} & 0 & 3r^{-2} & 0& 24r^2\\
0 & 0 & 0& 0& 0 & 0\\
0 & 3r^{-2} & 0 & {4\ln r} & 3r^2& 0\\
0 & 0 & 0 & 3r^2 & 9r^4& 24r^6\\
0 & 24r^2 & 0 & 0 & 24r^6& 96r^8
\end{bmatrix}\Big|_{r=\tau}^{r=\sigma},
\end{split}
\end{equation}
Here we have replaced constant functions $-3$ and $9$ by zeros which will not affect our results.
\end{proof}

\begin{lemma}\label{energy-g}
    Let $G(r)$ be the vector defined in Lemma \ref{g}. Then for each $Y_i\in \mathcal{H}^{\bS}_{2}$, we have
\begin{equation}
\begin{split}
    &\int_{\bB_{\sigma}(0)\backslash \bB_{\tau}(0)}\langle \nabla \Delta_0 (G(r)\cdot Y_i), \nabla \Delta_0(G(r)\cdot Y_i)\rangle \dvol\\
    =&128 \Vol(\bS^3) \begin{bmatrix}
0 & 0 & 0 & 0 & 0& 0\\
0 & -2r^{-6} & -2r^{-4} & 0 & 0& 10r^2\\
0 & -2r^{-4} & {-3r^{-2}}& 0& -2r^2 & 0\\
0 & 0 & 0 & 0 & 0& 0\\
0 & 0 & -2r^2 & 0 & 4r^6& 10r^8\\
0 & 10r^2 & 0 & 0 & 10r^8& {30r^{10}}
\end{bmatrix}\Bigg|_{r=\tau}^{r=\sigma}.
\end{split}
\end{equation}
    
\end{lemma}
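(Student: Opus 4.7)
The proof follows the same template as Lemma \ref{energy-f}, only with the eigenvalue $-3$ for $X_i\in\mathcal{H}^{\bS}_1$ replaced by the eigenvalue $-8$ for $Y_i\in\mathcal{H}^{\bS}_2$, and with the radial basis $\{F_\ell\}_{\ell=1}^6$ replaced by $\{G_\ell\}_{\ell=1}^6$. The plan is to express each operator as a $6\times 6$ matrix acting on the row $G(r)\cdot Y_i$, and then to apply the integration-by-parts identity (\ref{integration-by-parts}).

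First I record the matrix representation of $\Delta_0$. Using $\Delta_{\bS^3} Y_i=-8 Y_i$ together with $\Delta_0(r^k Y_i)=(k-2)(k+4)\,r^{k-2} Y_i$, the nonzero entries of this $6\times 6$ matrix $M$ are $M_{12}=-8$, $M_{23}=-8$, $M_{45}=16$, $M_{56}=40$. In particular, $G_1 Y_i$ and $G_4 Y_i$ are $\Delta_0$-harmonic, which already forces rows and columns $1$ and $4$ of the target matrix to vanish. Squaring, $\Delta_0^2$ has only two nonzero entries, $(M^2)_{13}=64$ and $(M^2)_{46}=640$, while $\partial_r\Delta_0$ factors as $r^{-1}$ times a constant matrix via $\partial_r r^k=k\,r^{-1}\cdot r^k$.

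Inserting $u=v=G(r)\cdot Y_i$ into (\ref{integration-by-parts}), the bulk integral $-\int\langle\Delta_0 u,\Delta_0^2 v\rangle\,\dvol$ has contributions only from columns $3$ and $6$, and after separating angular and radial variables each entry reduces to a closed-form integral of a monomial $r^a\,dr$ against the four-dimensional weight $r^3\,dr$. The angular part is handled by a single integration by parts on $\bS^3$, giving $\int_{\bS^3}|\nabla^{\bS^3}Y_i|^2\,dA=-\int_{\bS^3}Y_i\,\Delta_{\bS^3}Y_i\,dA=8\Vol(\bS^3)$; a direct check of exponents shows that no integrand of the form $r^{-1}\,dr$ ever arises here (in contrast with Lemma \ref{energy-f}), so no logarithms appear in the final matrix. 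The boundary term is treated identically, using the matrix of $\partial_r\Delta_0$ just recorded. Combining the bulk and boundary contributions and discarding any $r$-independent constants (which vanish upon $\big|_{r=\tau}^{r=\sigma}$) yields the claimed $6\times 6$ matrix with overall prefactor $128\,\Vol(\bS^3)$. The main bookkeeping obstacle is that the bulk term is not manifestly symmetric in $u$ and $v$: for instance, the $(3,2)$ entry receives no bulk contribution, since $\Delta_0^2(G_2 Y_i)=0$, and must be supplied entirely by the boundary term, whereas the symmetric $(2,3)$ entry receives contributions from both pieces, so verifying the equality of these off-diagonal pairs provides an essential internal consistency check throughout the computation.
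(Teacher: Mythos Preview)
Your approach is essentially identical to the paper's: represent $\Delta_0$, $\Delta_0^2$, and $\partial_r\Delta_0$ as $6\times 6$ matrices acting on the row $G(r)\cdot Y_i$, apply the integration-by-parts identity~(\ref{integration-by-parts}) to split into a bulk term and boundary terms, integrate the radial monomials, and combine. The observations that columns/rows $1$ and $4$ vanish (since $G_1Y_i$ and $G_4Y_i$ are harmonic) and that no $r^{-1}\,dr$ integrands appear (hence no logarithms) are correct and match the paper's computation.

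One slip: once you have invoked~(\ref{integration-by-parts}), the bulk integrand is $\langle\Delta_0 u,\Delta_0^2 v\rangle$, and both factors are of the form $f(r)Y_i$, so the angular integral is simply $\int_{\bS^3}Y_i^2\,dA=\Vol(\bS^3)$ under the paper's normalization. There is no further integration by parts on $\bS^3$ and no appearance of $\int_{\bS^3}|\nabla^{\bS^3}Y_i|^2\,dA=8\Vol(\bS^3)$; that quantity would enter only if you computed $|\nabla\Delta_0 u|^2$ directly without using~(\ref{integration-by-parts}). Using $8\Vol(\bS^3)$ here would throw off the overall prefactor, so be sure to use $\Vol(\bS^3)$ when you carry out the bookkeeping.
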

\begin{proof}
    From Lemma \ref{g}, we already know that 
    the Laplace operator $\Delta_0$ acts on the six-dimensional space $G(r)\cdot Y_i$ as the matrix:
\begin{equation}
    \Delta_0(G(r)\cdot Y_i)=G(r)\cdot Y_i\cdot 
    \begin{bmatrix}
0 & -8 & 0 & 0 & 0& 0\\
0 & 0 & -8 & 0 & 0& 0\\
0 & 0 & 0 & 0 & 0& 0\\
0 & 0 & 0 & 0 & 16& 0\\
0 & 0 & 0 & 0 & 0& 40\\
0 & 0 & 0 & 0 & 0& 0
\end{bmatrix},
\end{equation}
and
\begin{equation*}
        \partial_r \left(G(r)\cdot Y_i\right)=r^{-1}G(r)\cdot Y_i\cdot\begin{bmatrix}
-4 & 0 & 0 & 0 & 0& 0\\
0 & -2 & 0 & 0 & 0& 0\\
0 & 0 & 0 & 0 & 0& 0\\
0 & 0 & 0 & 2 & 0& 0\\
0 & 0 & 0 & 0 & 4& 0\\
0 & 0 & 0 & 0 & 0& 6
        \end{bmatrix}.
    \end{equation*}

The bi-Laplace operator $\Delta^2_0$ acts on the six-dimensional space $G(r)$ as the matrix:
\begin{equation}
    \Delta^2_0(G(r)\cdot Y_i)=G(r)\cdot Y_i\cdot 
    64 \begin{bmatrix}
0 & 0 & 1 & 0 & 0& 0\\
0 & 0 & 0 & 0 & 0& 0\\
0 & 0 & 0 & 0 & 0& 0\\
0 & 0 & 0 & 0 & 0& 10\\
0 & 0 & 0 & 0 & 0& 0\\
0 & 0 & 0 & 0 & 0& 0
\end{bmatrix}.
\end{equation}

Therefore, we have
\begin{equation}
\begin{split}
    &-\int_{\bB_{\sigma}(0)\backslash \bB_{\tau}(0)}\langle \Delta_0 (G(r)\cdot Y_i), \Delta^2_0(G(r)\cdot Y_i)\rangle \dvol\\
=&512\int_{\bS^3} Y^2_i\dvol_{g_c}\int_{\tau}^{\sigma}\begin{bmatrix}
0 & 0 & 0 & 0 & 0& 0\\
0 & 0 & r^{-8} & 0 & 0& 10r^{-2}\\
0 & 0 & r^{-6} & 0 & 0& 10\\
0 & 0 & 0 & 0 & 0& 0\\
0 & 0 & -2r^{-2} & 0 & 0& -20r^4\\
0 & 0 & -5 & 0 & 0 & -50r^6
\end{bmatrix}r^3dr\\
=&512\Vol(\bS^3) \begin{bmatrix}
0 & 0 & 0 & 0 & 0& 0\\
0 & 0 & -\frac{r^{-4}}{4} & 0 & 0& 5r^{2}\\
0 & 0 & -\frac{r^{-2}}{2} & 0 & 0& \frac{5}{2}r^4\\
0 & 0 & 0 & 0 & 0& 0\\
0 & 0 & -r^2 & 0 & 0& -\frac{5}{2}r^8\\
0 & 0 & -\frac{5}{4}r^4 & 0 & 0 & -5r^{10}
\end{bmatrix}\Bigg|_{r=\tau}^{r=\sigma}.
    \end{split}
\end{equation}
Besides,
\begin{equation}
    \partial_r \circ \Delta_0(G(r)\cdot Y_i)=16 r^{-1} G(r)\cdot Y_i\cdot 
    \begin{bmatrix}
0 & 2 & 0 & 0 & 0& 0\\
0 & 0 & 1 & 0 & 0& 0\\
0 & 0 & 0 & 0 & 0& 0\\
0 & 0 & 0 & 0 & 2& 0\\
0 & 0 & 0 & 0 & 0& 10\\
0 & 0 & 0 & 0 & 0& 0
\end{bmatrix}.
\end{equation}
We have
\begin{equation}
\begin{split}
    &\int_{\partial \bB_{\sigma}(0)\backslash \partial \bB_{\tau}(0)} \langle \Delta_0(G(r)\cdot Y_i), \partial_r \circ \Delta_0(G(r)\cdot Y_i) \rangle dA\\
    =&-128 \Vol(\bS^3) \begin{bmatrix}
0 & 0 & 0 & 0 & 0& 0\\
0 & 2r^{-6} & r^{-4} & 0 & 2& 10r^2\\
0 & 2r^{-4} & r^{-2}& 0& 2r^2 & 10r^4\\
0 & 0 & 0 & 0 & 0& 0\\
0 & -4 & -2r^2 & 0 & -4r^6& -20r^8\\
0 & -10r^2 & -5r^4 & 0 & -10r^8& -50r^{10}
\end{bmatrix}\Big|_{r=\tau}^{r=\sigma}.
    \end{split}
\end{equation}
Combining them together yields
\begin{equation}
\begin{split}
    &\int_{\bB_{\sigma}(0)\backslash \bB_{\tau}(0)}\langle \nabla \Delta_0 (G(r)\cdot Y_i), \nabla \Delta_0(G(r)\cdot Y_i)\rangle \dvol\\
    =&128 \Vol(\bS^3) \begin{bmatrix}
0 & 0 & 0 & 0 & 0& 0\\
0 & -2r^{-6} & -2r^{-4} & 0 & 0& 10r^2\\
0 & -2r^{-4} & {-3r^{-2}}& 0& -2r^2 & 0\\
0 & 0 & 0 & 0 & 0& 0\\
0 & 0 & -2r^2 & 0 & 4r^6& 10r^8\\
0 & 10r^2 & 0 & 0 & 10r^8& {30r^{10}}
\end{bmatrix}\Bigg|_{r=\tau}^{r=\sigma}.
\end{split}
\end{equation}
\end{proof}

\begin{lemma}\label{energy-h}
    Let $H(r)$ be the vector defined in Lemma \ref{h}. Then for each $W_i\in \mathcal{H}^{\bS}_{3}$, we have
\begin{equation}
\begin{split}
    &\int_{\bB_{\sigma}(0)\backslash \bB_{\tau}(0)}\langle \nabla \Delta_0 (H(r)\cdot W_i), \nabla \Delta_0(H(r)\cdot W_i)\rangle \dvol\\
    =&48 \Vol(\bS^3) \begin{bmatrix}
0 & 0 & 0 & 0 & 0& 0\\
0 & -15r^{-8} & -20r^{-6} & 0 & 0& 60r^2\\
0 & -20r^{-6} & {-32r^{-4}}& 0& -20r^2 & 0\\
0 & 0 & 0 & 0 & 0& 0\\
0 & 0 & -20r^2 & 0 & 25r^8& 60r^{10}\\
0 & 60r^2 & 0 & 0 & 60r^{10}& 240r^{12}
\end{bmatrix}\Big|_{r=\tau}^{r=\sigma}.
\end{split}
\end{equation}
\end{lemma}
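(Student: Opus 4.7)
My plan mirrors the computational scheme of Lemmas \ref{energy-e}--\ref{energy-g}, adapted to the degree-three spherical harmonics $W_i$ satisfying $\Delta_{\bS^3} W_i = -15 W_i$.

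The first step is to express $\Delta_0$ acting on the basis $\{H_l(r)\,W_i\}_{l=1}^{6}$ from Lemma \ref{h} as a $6 \times 6$ matrix. Using $\Delta_0(r^k W_i) = (k-3)(k+5)\, r^{k-2} W_i$, this matrix has nonzero entries $-12$, $-16$, $20$, and $48$ in positions $(1,2)$, $(2,3)$, $(4,5)$, and $(5,6)$, respectively, while $H_1$ and $H_4$ lie in $\ker \Delta_0$. Squaring this matrix gives the representation of $\Delta_0^2$, whose only nonzero entries are $(-12)(-16) = 192$ at position $(1,3)$ and $20 \cdot 48 = 960$ at position $(4,6)$. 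Since each $H_l$ is homogeneous of degree $k_l \in \{-5,-3,-1,3,5,7\}$, the identity $\partial_r H_l = k_l\, r^{-1} H_l$ then produces the analogous matrix for $\partial_r \circ \Delta_0$ by scaling row $l$ of the $\Delta_0$ matrix by $k_l r^{-1}$.

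The second step is to apply the integration-by-parts identity \eqref{integration-by-parts}, writing $\mB_{\sigma,\tau}(H W_i, H W_i)$ as a bulk term $-\int_{\bB_\sigma \setminus \bB_\tau} \langle \Delta_0(H W_i),\, \Delta_0^2(H W_i)\rangle\, \dvol$ plus two boundary terms of the form $\pm \int_{\partial \bB_r} \langle \Delta_0(H W_i),\, \partial_r \Delta_0(H W_i)\rangle\, dA$. The $\bS^3$-orthonormality $\int_{\bS^3} W_i^2\, \dvol_{g_c} = 1$ factors out the angular integration, reducing each pairing to an explicit monomial integral or boundary evaluation in $r$; the bulk integrand is a sum of terms $r^a$ integrated termwise via $\int r^a \cdot r^3\, dr$, and the boundary pairings are evaluated directly at $r = \sigma$ and $r = \tau$. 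Assembling these pieces into a $6 \times 6$ matrix and factoring out the common constant yields an expression of the asserted form $48\,\Vol(\bS^3) \cdot [\,\cdots\,]\big|_{r=\tau}^{r=\sigma}$.

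I do not anticipate any conceptual obstacle; the main challenge is bookkeeping across the two independent ``channels'' $H_1\!\to\!H_2\!\to\!H_3$ and $H_4\!\to\!H_5\!\to\!H_6$ of $\Delta_0$, and combining bulk and boundary contributions consistently. As in Lemmas \ref{energy-f} and \ref{energy-g}, the rows and columns indexed by $H_1$ and $H_4$ vanish identically, accounting for the zero first and fourth rows and columns of the stated matrix. When a pairing produces an antiderivative that is independent of $r$, its evaluation at $\sigma$ and at $\tau$ cancels, which explains the remaining entries recorded as $0$ in the final expression.
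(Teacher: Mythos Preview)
Your proposal is correct and follows essentially the same approach as the paper: matrix representations of $\Delta_0$, $\Delta_0^2$, and $\partial_r\circ\Delta_0$ on the basis $\{H_l(r)W_i\}$, followed by the integration-by-parts identity \eqref{integration-by-parts} to split the bilinear form into bulk and boundary pieces, each reduced to radial integrals via the $\bS^3$-normalization of $W_i$. One small refinement: not all of the residual zero entries arise from constant antiderivatives---the $(3,6)$ and $(6,3)$ entries vanish by exact cancellation between the bulk and boundary contributions (equivalently, by the $\bS^3$-orthogonality $\int_{\bS^3}\langle\nabla(r^{-3}W_i),\nabla(r^5W_i)\rangle\,d\omega=0$), so you should verify those separately rather than relying on the constant-antiderivative heuristic alone.
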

\begin{proof}
    From Lemma \ref{h}, we already know that 
    the Laplace operator $\Delta_0$ acts on the six-dimensional space $H(r)\cdot W_i$ as the matrix:
\begin{equation}
    \Delta_0(H(r)\cdot W_i)=4H(r)\cdot W_i\cdot 
    \begin{bmatrix}
0 & -3 & 0 & 0 & 0& 0\\
0 & 0 & -4 & 0 & 0& 0\\
0 & 0 & 0 & 0 & 0& 0\\
0 & 0 & 0 & 0 & 5& 0\\
0 & 0 & 0 & 0 & 0& 12\\
0 & 0 & 0 & 0 & 0& 0
\end{bmatrix},
\end{equation}
and
\begin{equation*}
        \partial_r \left(H(r)\cdot W_i\right)=r^{-1}H(r)\cdot W_i\cdot\begin{bmatrix}
-5 & 0 & 0 & 0 & 0& 0\\
0 & -3 & 0 & 0 & 0& 0\\
0 & 0 & -1 & 0 & 0& 0\\
0 & 0 & 0 & 3 & 0& 0\\
0 & 0 & 0 & 0 & 5& 0\\
0 & 0 & 0 & 0 & 0& 7
        \end{bmatrix}.
    \end{equation*}

The bi-Laplace operator $\Delta^2_0$ acts on the six-dimensional space $H(r)$ as the matrix:
\begin{equation}
    \Delta^2_0(H(r)\cdot W_i)=192H(r)\cdot W_i\cdot 
     \begin{bmatrix}
0 & 0 & 1 & 0 & 0& 0\\
0 & 0 & 0 & 0 & 0& 0\\
0 & 0 & 0 & 0 & 0& 0\\
0 & 0 & 0 & 0 & 0& 5\\
0 & 0 & 0 & 0 & 0& 0\\
0 & 0 & 0 & 0 & 0& 0
\end{bmatrix}.
\end{equation}

Therefore, we have
\begin{equation}
\begin{split}
    &-\int_{\bB_{\sigma}(0)\backslash \bB_{\tau}(0)}\langle \Delta_0 (H(r)\cdot W_i), \Delta^2_0(H(r)\cdot W_i)\rangle \dvol\\
=&-768\int_{\bS^3} W^2_i\dvol_{g_c}\int_{\tau}^{\sigma}\begin{bmatrix}
0 & 0 & 0 & 0 & 0& 0\\
0 & 0 & -3r^{-10} & 0 & 0& -15r^{-2}\\
0 & 0 & -4r^{-8} & 0 & 0& -20\\
0 & 0 & 0 & 0 & 0& 0\\
0 & 0 & 5r^{-2} & 0 & 0& 25r^6\\
0 & 0 & 12 & 0 & 0 & 60r^8
\end{bmatrix}r^3dr\\
=&-768\Vol(\bS^3) \begin{bmatrix}
0 & 0 & 0 & 0 & 0& 0\\
0 & 0 & \frac{r^{-6}}{2} & 0 & 0& -\frac{15}{2}r^{2}\\
0 & 0 & r^{-4} & 0 & 0& -5r^4\\
0 & 0 & 0 & 0 & 0& 0\\
0 & 0 & \frac{5}{2}r^2 & 0 & 0& \frac{5}{2}r^{10}\\
0 & 0 & 3r^4 & 0 & 0 & 5r^{12}
\end{bmatrix}\Bigg|_{r=\tau}^{r=\sigma}.
    \end{split}
\end{equation}
Besides,
\begin{equation}
    \partial_r \circ \Delta_0(H(r)\cdot W_i)=12 r^{-1} H(r)\cdot W_i\cdot 
    \begin{bmatrix}
0 & 5 & 0 & 0 & 0& 0\\
0 & 0 & 4 & 0 & 0& 0\\
0 & 0 & 0 & 0 & 0& 0\\
0 & 0 & 0 & 0 & 5& 0\\
0 & 0 & 0 & 0 & 0& 20\\
0 & 0 & 0 & 0 & 0& 0
\end{bmatrix}.
\end{equation}
We have
\begin{equation}
\begin{split}
    &\int_{\partial \bB_{\sigma}(0)\backslash \partial \bB_{\tau}(0)} \langle \Delta_0(H(r)\cdot W_i), \partial_r \circ \Delta_0(H(r)\cdot W_i) \rangle dA\\
    =&48 \Vol(\bS^3) \begin{bmatrix}
0 & 0 & 0 & 0 & 0& 0\\
0 & -15r^{-8} & -12r^{-6} & 0 & -15& -60r^2\\
0 & -20r^{-6} & -16r^{-4}& 0& -20r^2 & -80r^4\\
0 & 0 & 0 & 0 & 0& 0\\
0 & 25 & 20r^2 & 0 & 25r^8& 100r^{10}\\
0 & 60r^2 & 48r^4 & 0 & 60r^{10}& 240r^{12}
\end{bmatrix}\Big|_{r=\tau}^{r=\sigma}.
    \end{split}
\end{equation}
Combining them together yields
\begin{equation}
\begin{split}
    &\int_{\bB_{\sigma}(0)\backslash \bB_{\tau}(0)}\langle \nabla \Delta_0 (H(r)\cdot W_i), \nabla \Delta_0(H(r)\cdot W_i)\rangle \dvol\\
    =&48 \Vol(\bS^3) \begin{bmatrix}
0 & 0 & 0 & 0 & 0& 0\\
0 & -15r^{-8} & -20r^{-6} & 0 & 0& 60r^2\\
0 & -20r^{-6} & {-32r^{-4}}& 0& -20r^2 & 0\\
0 & 0 & 0 & 0 & 0& 0\\
0 & 0 & -20r^2 & 0 & 25r^8& 60r^{10}\\
0 & 60r^2 & 0 & 0 & 60r^{10}& 240r^{12}
\end{bmatrix}\Big|_{r=\tau}^{r=\sigma}.
\end{split}
\end{equation}
\end{proof}
\section{Energy estimate}\label{energy}
We continue the construction of the connected sum. Following the notation in Section \ref{connected-sum}. For formally defining the pasted submanifold, we remove from $\Sigma_1$ the neighborhood of the point $p_1$ corresponding to $\{z\in \bR^4:\gamma-\sqrt{\alpha}\leq |z|<\infty\}$ under the map $\pi\circ  \mathring{\Phi}_{1,\alpha}$. Analogously, we remove from $\Sigma_2$ the set corresponding to $\{z\in \bR^4: |z|\leq 1+\sqrt{\alpha}\}$ under the map $\pi\circ \Phi_{2, \beta^{-1}}$. We denote the remaining open submanifolds by $\mU\subset \Sigma_1$ and $\mV\subset \Sigma_2$, and put $\mW=\bB_{r\beta^{-1}}(0)\backslash \bB_{\alpha R}(0)$. Then the new submanifold is defined as $\Sigma:=\left(\mU\bigcup \mV\bigcup \mW\right)/ \sim$, where $\sim$ means the obvious identifications
\begin{equation*}
    \begin{split}
        p\sim z&=\pi\circ  \mathring{\Phi}_{1,\alpha}, \quad ~~\mbox{for}~~ p\in \mU, z\in \mW,\\
        q\sim z&=\pi\circ  \Phi_{2,\beta^{-1}}, \quad ~~\mbox{for}~~ q\in \mV, z\in \mW.
    \end{split}
\end{equation*}
$\Sigma$ is just the connected sum of $\Sigma_1 \# \Sigma_2$ and the new immersion is well defined piecewisely by setting
\begin{equation*}
\Phi: \Sigma\to \bR^n, \quad \Phi=\left\{
    \begin{split}
        &\mathring{\Phi}_{1,\alpha}, \quad ~~\mbox{on}~~ \mU,\\
        &\mathring{\Phi}_{2,\beta^{-1}}, \quad ~~\mbox{on}~~ \mV,\\
        &(z,w(z)),  \quad ~~\mbox{on}~~ \mW.
    \end{split}
    \right.
\end{equation*}
The rest of this section is devoted to estimate the energy of the immersion $\Sigma$.

\begin{lemma}[Triharmonic energy savings]\label{energy-savings}
    As $\alpha$ tends to zero, we have
    \begin{equation*}
        \begin{split}
            &\mB_{\infty, \gamma}(\mathring{p}_{\alpha},\mathring{p}_{\alpha})=\frac{\alpha^2}{\gamma^2}\Vol(\bS^3)\left(96\sum_{i=1}^{N_2}|p_i|^2+\frac{4\alpha^2}{3\gamma^2}\sum_{i=1}^{N_1}|q^{(1)}_i|^2 +\frac{128\alpha^2}{3\gamma^2} \sum_{i=1}^{N_3}|q^{(3)}_i|^2\right),\\
            &\mB_{\infty, \gamma-\sqrt{\alpha}}(\mathring{u}_{\alpha},\mathring{u}_{\alpha})=\mB_{\infty, \gamma}(\mathring{p}_{\alpha},\mathring{p}_{\alpha})+O\left(\frac{\alpha^{\frac{5}{2}}}{\gamma^3}\right)+O\left(\frac{\alpha^{4}}{\gamma^4}\right)+O\left(\frac{\alpha^{6}}{\gamma^6}\right);\\
            &\mB_{1, 0}(q_{\beta^{-1}},q_{\beta^{-1}})=4\Vol(\bS^3)\beta^4 \sum_{i=1}^{N_1} |s^{(1)}_i|^2,\\
            &\mB_{1+\sqrt{\alpha}, 0}(v_{\beta^{-1}},v_{\beta^{-1}})=\mB_{1, 0}(q_{\beta^{-1}},q_{\beta^{-1}})+O(\alpha^{\frac{1}{2}}\beta^2)+O(\beta^4)+O(\beta^6).
        \end{split}
    \end{equation*}
\end{lemma}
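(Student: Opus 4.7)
The plan is to compute each bilinear form by decomposing $\mathring{p}_\alpha$ and $q_{\beta^{-1}}$ into the spherical-harmonic bases of Section~\ref{formal-solution} and then reading the answer off the six-by-six matrices of Section~\ref{bilinear}. Because distinct spherical harmonic modes are $L^2(\bS^3)$-orthogonal and $\Delta_0$ preserves each mode, only a single diagonal entry of each matrix contributes per mode, so the whole calculation reduces to a short list of lookups.

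For the first identity I write
\[
\mathring{p}_\alpha(z) = \frac{\alpha}{2}\sum_{i=1}^{N_2} p_i\, Y_i + \frac{\alpha^2}{6|z|}\sum_{i=1}^{N_1} q^{(1)}_i X_i + \frac{\alpha^2}{6|z|}\sum_{i=1}^{N_3} q^{(3)}_i W_i,
\]
so each summand is $G_3(r)=1$, $F_2(r)=r^{-1}$, or $H_3(r)=r^{-1}$ with an explicit scalar coefficient. I then apply the $(3,3)$-entry $-3r^{-2}$ of Lemma~\ref{energy-g} (prefactor $128\Vol(\bS^3)$), the $(2,2)$-entry $-3r^{-4}$ of Lemma~\ref{energy-f} (prefactor $16\Vol(\bS^3)$), and the $(3,3)$-entry $-32r^{-4}$ of Lemma~\ref{energy-h} (prefactor $48\Vol(\bS^3)$), each evaluated on $(\tau,\sigma)=(\gamma,\infty)$. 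The boundary at $\sigma=\infty$ contributes zero since all three entries decay in $r$; the boundary at $\tau=\gamma$, together with the coefficients $(\tfrac12)^2$ or $(\tfrac16)^2$, produces the stated terms $96\alpha^2\gamma^{-2}\sum|p_i|^2$, $\tfrac{4}{3}\alpha^4\gamma^{-4}\sum|q^{(1)}_i|^2$, and $\tfrac{128}{3}\alpha^4\gamma^{-4}\sum|q^{(3)}_i|^2$.

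For the third identity I expand
\[
q_{\beta^{-1}}(z) = \frac{\beta r_0}{2}\, r^2 + \frac{\beta}{2}\sum_i r_i\, r^2 Y_i + \frac{\beta^2}{6}\sum_i s^{(1)}_i\, r^3 X_i + \frac{\beta^2}{6}\sum_i s^{(3)}_i\, r^3 W_i.
\]
The first piece is $\tfrac{\beta r_0}{2}e_4$ and $\Delta_0 e_4=8$ is constant, so the $(4,4)$-entry of Lemma~\ref{energy-e} vanishes. The $r^2 Y_i$ piece uses $G_4$, which is harmonic on $\bR^4$ since $(2-2)(2+4)=0$, so the $(4,4)$-entry of Lemma~\ref{energy-g} is zero. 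The $r^3 W_i$ piece uses $H_4$, again harmonic since $(3-3)(3+5)=0$, so the $(4,4)$-entry of Lemma~\ref{energy-h} is zero. The only surviving contribution is from the $r^3 X_i$ piece through the $(5,5)$-entry $9r^4$ of Lemma~\ref{energy-f} (prefactor $16\Vol(\bS^3)$), which evaluated on $(0,1)$ with coefficient $(\beta^2/6)^2$ yields exactly $4\Vol(\bS^3)\beta^4\sum|s^{(1)}_i|^2$.

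For the remaining two identities I write $\mathring{u}_\alpha=\mathring{p}_\alpha+\mathring{\varphi}_\alpha$ and $v_{\beta^{-1}}=q_{\beta^{-1}}+\psi_{\beta^{-1}}$, expand bilinearly, and split the difference into a thin-annulus piece of the leading term, the self-energy of the remainder, and a cross term controlled via Cauchy--Schwarz against the first and third identities. For the second identity the dominant $Y_i$ mode on $[\gamma-\sqrt\alpha,\gamma]$ produces the factor $\gamma^{-2}-(\gamma-\sqrt\alpha)^{-2}\sim\sqrt\alpha/\gamma^3$, giving $O(\alpha^{5/2}/\gamma^3)$; the decay bound~(\ref{rescale-decay}) yields $|\nabla\Delta_0\mathring{\varphi}_\alpha|\leq C\alpha^3 r^{-5}$ whose integrated self-energy against $r^3\,dr$ on $[\gamma-\sqrt\alpha,\infty)$ is $O(\alpha^6/\gamma^6)$; Cauchy--Schwarz combined with the first identity then gives $O(\alpha^4/\gamma^4)$ for the cross term. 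For the fourth identity the surviving cubic $X_i$ piece on $[1,1+\sqrt\alpha]$ contributes $O(\sqrt\alpha\,\beta^4)\subset O(\alpha^{1/2}\beta^2)$, (\ref{rescale-decay}) gives $|\nabla\Delta_0\psi_{\beta^{-1}}|\leq C\beta^3 r$ whose self-energy is $O(\beta^6)$, and the cross term is $O(\beta^5)\subset O(\beta^4)$. The main obstacle I anticipate is identifying in advance the three vanishing modes in the third identity--the $R(z,z)$ trace term and the $r^2 Y_i$, $r^3 W_i$ harmonic pieces--since without this cancellation one would expect spurious contributions proportional to $\sum|r_i|^2$ and $\sum|s^{(3)}_i|^2$; it is precisely their absence that keeps $\mB_{1,0}(q_{\beta^{-1}},q_{\beta^{-1}})$ of order $\beta^4$, which is essential for the energy-saving estimate leading to Theorem~\ref{Main 1}.
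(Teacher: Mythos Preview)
Your proposal is correct and follows essentially the same route as the paper: decompose $\mathring{p}_\alpha$ and $q_{\beta^{-1}}$ into the bases $F_2,G_3,H_3$ and $e_4,G_4,F_5,H_4$ respectively, read off the relevant diagonal entries from Lemmas~\ref{energy-e}--\ref{energy-h}, and then handle the error terms by bilinear expansion together with the decay bounds~(\ref{rescale-decay}). The only cosmetic difference is that the paper bounds the cross terms $\mB(\mathring{p}_\alpha,\mathring{\varphi}_\alpha)$ and $\mB(q_{\beta^{-1}},\psi_{\beta^{-1}})$ by direct pointwise integration rather than Cauchy--Schwarz, and takes the thin-annulus piece for the full functions $\mathring{u}_\alpha$, $v_{\beta^{-1}}$ rather than for the leading polynomials; both choices give the same orders and neither changes the argument.
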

\begin{proof}
    Recall that 
    \begin{equation*}
        \mathring{u}_{\alpha}(z)=\mathring{p}_{\alpha}(z)+\mathring{\varphi}_{\alpha}(z),
    \end{equation*}
    where 
\begin{equation*}
    \mathring{p}_{\alpha}(z)=\frac{\alpha}{2}\mathring{P}_{\alpha}\left(\frac{z}{|z|},\frac{z}{|z|} \right)+\frac{\alpha^2}{6}{Q}_{\alpha}\left(\frac{z}{|z|},\frac{z}{|z|},\frac{z}{|z|} \right)|z|^{-1}.
\end{equation*}    
For $\gamma-\sqrt{\alpha}\leq |z|<\infty$, $\mathring{p}_{\alpha}(z)$ and $\mathring{\varphi}_{\alpha}(z)$ satisfy
\begin{equation}\label{apriori-estimate-u}
    \begin{split}
        \sum_{i=0}^{3} |z|^{i}|D^{i}\mathring{p}_{\alpha}(z)|\leq C\alpha,\quad \sum_{i=0}^{3} |z|^{i+2}|D^{i}\mathring{\varphi}_{\alpha}(z)|\leq C\alpha^3.
    \end{split}
\end{equation}
It follows that

\begin{equation}
    \begin{split}
        |\mB_{\infty, \gamma}(\mathring{p}_{\alpha},\mathring{\varphi}_{\alpha})|&\leq C\int_{\gamma}^{\infty}\frac{\alpha}{r^3}\cdot \frac{\alpha^3}{r^5}\cdot r^3dr=O\left(\frac{\alpha^4}{\gamma^4}\right),\\
        |\mB_{\infty, \gamma}(\mathring{\varphi}_{\alpha},\mathring{\varphi}_{\alpha})|&\leq C\int_{\gamma}^{\infty}\left(\frac{\alpha^3}{r^5}\right)^2 r^3dr=O\left(\frac{\alpha^6}{\gamma^6}\right),\\
        |\mB_{\gamma, \gamma-\sqrt{\alpha}}(\mathring{u}_{\alpha},\mathring{u}_{\alpha})|&=O\left(\frac{\alpha^{\frac{5}{2}}}{\gamma^3}\right).
    \end{split}
\end{equation}
Besides, by Lemma \ref{f}, \ref{g}, \ref{h} and (\ref{PQRS-expansion}), we have
\begin{equation*}
    \mathring{p}_{\alpha}(z)=\frac{\alpha}{2}\sum_{i=1}^{N_2}p_i\cdot G_3(r)\cdot Y_i+\frac{\alpha^2}{6} \left(\sum_{i=1}^{N_1} q^{(1)}_i\cdot F_2(r)\cdot X_i+\sum_{i=1}^{N_3}q^{(3)}_i \cdot H_3(r)\cdot W_i\right).
\end{equation*}
Therefore, by Lemma \ref{energy-g} and Lemma \ref{energy-h}, we obtain
\begin{equation}
    \mB_{\infty, \gamma}(\mathring{p}_{\alpha},\mathring{p}_{\alpha})=\Vol(\bS^3)\left(\frac{96\alpha^2}{\gamma^2}\sum_{i=1}^{N_2}|p_i|^2+\frac{4\alpha^4}{3\gamma^4}\sum_{i=1}^{N_1}|q^{(1)}_i|^2 +\frac{128\alpha^4}{3\gamma^4} \sum_{i=1}^{N_3}|q^{(3)}_i|^2\right).
\end{equation}
Similarly, we have
\begin{equation}
    v_{\beta^{-1}}(z)=q_{\beta^{-1}}(z)+\psi_{\beta^{-1}}(z) ,
\end{equation}
where
\begin{equation*}
    q_{\beta^{-1}}(z)=\frac{\beta}{2}R(z,z)+\frac{\beta^2}{6}S(z,z,z).
\end{equation*}
For $0\leq |z|\leq 1+\sqrt{\alpha}$, $q_{\beta^{-1}}(z)$ and $\psi_{\beta^{-1}}(z)$ satisfy
\begin{equation}\label{apriori-estimate-v}
\sum_{i=0}^{3} |D^{i}\mathring{q}_{\beta^{-1}}(z)|\leq C\beta,\quad \sum_{i=0}^{3} |z|^{-4+i}|D^{i}\psi_{\beta^{-1}}(z)|\leq C\beta^3 
\end{equation}
It follows that
\begin{equation}
    \begin{split}
        |\mB_{1, 0}(q_{\beta^{-1}},\psi_{\beta^{-1}})|&\leq C\beta^4\int_{0}^{1} r^4dr=O(\beta^4),\\
        |\mB_{1, 0}(\psi_{\beta^{-1}},\psi_{\beta^{-1}})|&\leq C\beta^6\int_{0}^{1} r^5dr=O(\beta^6),\\
        |\mB_{1+\sqrt{\alpha}, 1}(v_{\beta^{-1}},v_{\beta^{-1}})|&=O(\alpha^{\frac{1}{2}}\beta^2).
    \end{split}
\end{equation}
Besides, by Lemma \ref{e}, \ref{f}, \ref{g}, \ref{h} and (\ref{PQRS-expansion}), we have
\begin{equation*}
\begin{split}
    q_{\beta^{-1}}(z)=&\frac{\beta}{2}\left(r_0e_4(r)+\sum_{i=1}^{N_2}r_i\cdot G_4(r)\cdot Y_i\right)\\
    &+\frac{\beta^2}{6} \left(\sum_{i=1}^{N_1} s^{(1)}_i\cdot F_5(r)\cdot X_i+\sum_{i=1}^{N_3}s^{(3)}_i\cdot H_4(r)\cdot W_i\right).
    \end{split}
\end{equation*}
Therefore, by Lemma \ref{energy-e}, \ref{energy-f}, \ref{energy-g} and \ref{energy-h}, we obtain
\begin{equation}
\begin{split}
    \mB_{1, 0}(q_{\beta^{-1}},q_{\beta^{-1}})=4\Vol(\bS^3)\beta^4 \sum_{i=1}^{N_1} |s^{(1)}_i|^2.
    \end{split}
\end{equation}
\end{proof}

\begin{lemma}\label{triharmonic-energy}
The triharmonic energy of the interpolation is given by as follows
    \begin{equation*}
        \begin{split}
            \mB_{1, \gamma}(w,w)=&32\Vol(\bS^3)\beta^2|r_0|^2\gamma^2\\            
            &+3\Vol(\bS^3)\frac{\alpha^4}{\gamma^4}\sum_{i=1}^{N_1}|q^{(1)}_i|^2\left(\frac{2}{3}-\frac{3}{2(\ln \gamma)^2}\right)^2\\
    &+4\Vol(\bS^3)\beta^4\sum_{i=1}^{N_1}|s^{(1)}_i|^2+\frac{8}{3}\Vol(\bS^3)\alpha^2\beta^2 \sum_{i=1}^{N_1}\langle q^{(1)}_i, s^{(1)}_i\rangle \\
&+96\Vol(\bS^3) \alpha^2\sum_{i=1}^{N_2}\left(\gamma^{-2}+3\right)|p_i|^2\\
   &-192\Vol(\bS^3)\alpha\beta\sum_{i=1}^{N_2} \langle p_i, r_i\rangle\\
    &+\frac{128}{3}\Vol(\bS^3)\frac{\alpha^4}{\gamma^4}\sum_{i=1}^{N_3}|q^{(3)}_i|^2+\frac{2368}{3}\Vol(\bS^3)\alpha^4\sum_{i=1}^{N_3}|q^{(3)}_i|^2\\
&-\frac{256}{3}\Vol(\bS^3)\alpha^2\beta^2\sum_{i=1}^{N_3}\langle q^{(3)}_i, s^{(3)}_i\rangle\\
&+O(\beta^2\gamma^4(\ln \gamma)^2)+O(\alpha^4(\ln \gamma)^{-1})+O(\alpha^2 \gamma^2)+O(\alpha^4\gamma^2),\\
            |\mB_{\gamma, \gamma-\sqrt{\alpha}}(w,w)|&+|\mB_{1+\sqrt{\alpha}, 1}(w,w)|=O\left(\frac{\alpha^{\frac{5}{2}}}{\gamma^3}\right).
        \end{split}
    \end{equation*}
\end{lemma}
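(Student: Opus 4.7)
The plan is to exploit the spherical harmonic decomposition of $w$ together with orthogonality so that the bilinear form $\mB_{1,\gamma}(\cdot,\cdot)$ splits into independent contributions from each degree. By Lemmas \ref{e}, \ref{f}, \ref{g}, \ref{h}, the interpolation has the form
\begin{equation*}
    w(z) = e(r) + \sum_{i=1}^{N_1} f_i(r) X_i + \sum_{i=1}^{N_2} g_i(r) Y_i + \sum_{i=1}^{N_3} h_i(r) W_i,
\end{equation*}
and since $\Delta_0$ preserves each sector while spherical harmonics of different degrees are $L^2$-orthogonal on $\mathbb{S}^3$, the cross-terms in $\mB_{1,\gamma}(w,w)$ vanish. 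Thus I would write
\begin{equation*}
    \mB_{1,\gamma}(w,w) = \mB_{1,\gamma}(e,e) + \sum_{i=1}^{N_1}\mB_{1,\gamma}(f_i X_i, f_i X_i) + \sum_{i=1}^{N_2}\mB_{1,\gamma}(g_i Y_i, g_i Y_i) + \sum_{i=1}^{N_3}\mB_{1,\gamma}(h_i W_i, h_i W_i),
\end{equation*}
and in each sector apply the explicit matrix evaluations from Lemmas \ref{energy-e}, \ref{energy-f}, \ref{energy-g}, \ref{energy-h} with $\sigma = 1$, $\tau = \gamma$.

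The second step is to substitute the asymptotic expansions for the coefficients from Lemmas \ref{corfficient-e}, \ref{corfficient-f}, \ref{corfficient-g}, \ref{corfficient-h} into these matrix expressions. Concretely, for the radial sector, one pairs the coefficient vector $(A_1,\ldots,A_6)$ against itself through the matrix in Lemma \ref{energy-e}, and similarly for each harmonic degree. Only a few entries in each bilinear matrix actually dominate asymptotically: in the $Y_i$-sector the entry proportional to $\gamma^{-2}$ (coming from the $G_3, G_3$ pairing) forces the $96\gamma^{-2}\sum|p_i|^2$ leading term, together with a constant $3|p_i|^2$ term from the $G_4, G_4$ pairing and the cross term $-192\langle p_i, r_i\rangle$ coming from the $G_3, G_4$ off-diagonal entry. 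Likewise, the $X_i$-sector is governed by the $F_2$-diagonal entry (producing the $\alpha^4/\gamma^4$ piece with the $(\ln\gamma)^{-2}$ correction), the $F_5$-diagonal entry (giving the $\beta^4$ piece), and the $F_2$–$F_5$ off-diagonal entry (giving the $\alpha^2\beta^2$ cross term); the $W_i$-sector is handled analogously via the $H_3, H_4$ entries. The radial sector contributes $32\Vol(\bS^3)\beta^2 |r_0|^2\gamma^2$ from the $e_2$–$e_4$ pattern and an $O(\beta^2\gamma^4(\ln\gamma)^2)$ remainder. Collecting these dominant contributions and tracking the higher-order pieces in each asymptotic expansion yields the stated formula.

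For the transition regions, $w$ coincides with $\mathring{p}_\alpha + \eta(\gamma-r)\mathring{\varphi}_\alpha$ on $[\gamma-\sqrt{\alpha},\gamma]$ and with $q_{\beta^{-1}} + \eta(r-1)\psi_{\beta^{-1}}$ on $[1,1+\sqrt{\alpha}]$. Using the $C^3$-bounds on $\mathring{p}_\alpha$, $\mathring{\varphi}_\alpha$ from (\ref{apriori-estimate-u}), together with the derivative bounds $|\eta^{(k)}|\leq C\alpha^{-k/2}$, one estimates $|\nabla\Delta_0 w|$ pointwise and integrates over an annulus of radial thickness $\sqrt{\alpha}$ and radius $\approx\gamma$. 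The worst term comes from three derivatives hitting $\mathring{p}_\alpha$, giving $|\nabla\Delta_0 w|\leq C\alpha/\gamma^3$ and volume $\sqrt{\alpha}\cdot\gamma^3$, yielding the claimed $O(\alpha^{5/2}/\gamma^3)$. The analogous estimate near $r=1$ is actually lower order and absorbed.

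The main obstacle is bookkeeping: each diagonal and off-diagonal matrix entry in Lemmas \ref{energy-e}–\ref{energy-h} interacts with the six coefficients of the corresponding radial expansion, so one must carefully verify which combinations produce leading-order contributions and which are absorbed by the stated error terms $O(\beta^2\gamma^4(\ln\gamma)^2)$, $O(\alpha^4(\ln\gamma)^{-1})$, $O(\alpha^2\gamma^2)$, $O(\alpha^4\gamma^2)$. The presence of the $(\ln\gamma)^{-1}$ factors in the coefficient $B_i^4$ (Lemma \ref{corfficient-f}) is delicate because pairing $B_i^4$ against itself through the $F_4,F_4$ entry $4\ln r$ produces the $-3/[2(\ln\gamma)^2]$ correction inside $\big(\tfrac{2}{3} - \tfrac{3}{2(\ln\gamma)^2}\big)^2$; missing this precise correction would break the final energy reduction inequality. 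Thus the routine calculation still requires systematic tracking of the $\ln\gamma$ dependence and careful cancellation of formally larger terms, but no new analytic ingredient beyond the integration-by-parts formula \eqref{integration-by-parts} and the coefficient asymptotics already established.
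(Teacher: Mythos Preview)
Your proposal is correct and follows essentially the same approach as the paper: the spherical-harmonic decomposition of $w$, the orthogonality splitting of $\mB_{1,\gamma}(w,w)$ into the four sectors, the evaluation via Lemmas \ref{energy-e}--\ref{energy-h} combined with the coefficient asymptotics of Lemmas \ref{corfficient-e}--\ref{corfficient-h}, and the pointwise $C^3$-bound on the transition annuli are exactly what the paper does. One small slip in your sketch: the constant $+3$ correction in the $Y_i$-sector does \emph{not} come from the $G_4,G_4$ pairing (that matrix entry vanishes in Lemma \ref{energy-g}); it arises instead from the interplay of the $\gamma^{-2}$- and $\gamma^{-4}$-singular matrix entries with the $O(\gamma^2)$ and $O(\gamma^4)$ corrections in $C_i^2,\dots,C_i^6$, which is precisely the bookkeeping you flag as the main obstacle.
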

\begin{proof}
    Recall that on $B_1(0)\backslash B_{\gamma}(0)$
        \begin{equation*}
w=e(r)+\sum_{i=1}^{N_1}f_i(r)X_i+\sum_{i=1}^{N_2}g_i(r)Y_i+\sum_{i=1}^{N_3}h_i(r)W_i.
\end{equation*}
Therefore, by Lemma \ref{energy-e}, \ref{energy-f}, \ref{energy-g} and \ref{energy-h}, we have
\begin{equation*}
    \begin{split}
        \mB_{1, \gamma}(w,w)&=\mB_{1, \gamma}(e(r),e(r))+\sum_{i=1}^{N_1}\mB_{1, \gamma}(f_i(r)X_i,f_i(r)X_i)\\
        &+\sum_{i=1}^{N_2}\mB_{1, \gamma}(g_i(r)Y_i,g_i(r)Y_i)+\sum_{i=1}^{N_3}\mB_{1, \gamma}(h_i(r)W_i,h_i(r)W_i).
    \end{split}
\end{equation*}
By Lemma \ref{corfficient-e} and Lemma \ref{energy-e}, we have
\begin{equation*}
    \mB_{1, \gamma}(e(r),e(r))=32\Vol(\bS^3)\beta^2|r_0|^2\gamma^2+O(\beta^2\gamma^4(\ln \gamma)^2).
\end{equation*}

By Lemma \ref{corfficient-f} and Lemma \ref{energy-f}, we have
\begin{equation*}
\begin{split}
    \mB_{1, \gamma}(f_i(r)X_i,f_i(r)X_i)=&3\Vol(\bS^3)\frac{\alpha^4}{\gamma^4}|q^{(1)}_i|^2\left(\frac{2}{3}-\frac{3}{2(\ln \gamma)^2}\right)^2\\
    &+4\Vol(\bS^3)\beta^4|s^{(1)}_i|^2+\frac{8}{3}\Vol(\bS^3)\alpha^2\beta^2 \langle q^{(1)}_i, s^{(1)}_i\rangle \\
    &+O(\alpha^4(\ln \gamma)^{-1}).
    \end{split}
\end{equation*}

By Lemma \ref{corfficient-g} and Lemma \ref{energy-g}, we have
\begin{equation*}
\begin{split}
   \mB_{1, \gamma}(g_i(r)Y_i,g_i(r)Y_i)=&96\Vol(\bS^3) \alpha^2\left(\gamma^{-2}+3\right)|p_i|^2\\
   &-192\Vol(\bS^3)\alpha\beta \langle p_i, r_i\rangle+O(\alpha^2 \gamma^2).
   \end{split}
\end{equation*}

By Lemma \ref{corfficient-h} and Lemma \ref{energy-h}, we have

\begin{equation*}
\begin{split}
\mB_{1, \gamma}(h_i(r)W_i,h_i(r)W_i)=&\frac{128}{3}\Vol(\bS^3)\frac{\alpha^4}{\gamma^4}|q^{(3)}_i|^2+\frac{2368}{3}\Vol(\bS^3)\alpha^4|q^{(3)}_i|^2\\
&-\frac{256}{3}\alpha^2\beta^2\Vol(\bS^3)\langle q^{(3)}_i, s^{(3)}_i\rangle+O(\alpha^4\gamma^2).
\end{split}
\end{equation*}

For $\gamma-\sqrt{\alpha}\leq |z|\leq 1+\sqrt{\alpha}$, $w$ satisfies
\begin{equation}\label{apriori-estimate-w}
    \sum_{i=0}^{3}|D^i w|\leq O(\alpha, \beta).
\end{equation}
Therefore, we obtain that
\begin{equation*}
    |\mB_{\gamma, \gamma-\sqrt{\alpha}}(w,w)|+|\mB_{1+\sqrt{\alpha}, 1}(w,w)|=O\left(\frac{\alpha^{\frac{5}{2}}}{\gamma^3}\right).
\end{equation*}
\end{proof}

For clarity, the fourth order Willmore energy considered is 
\begin{equation*}
   \mE^{(\mu,\nu)}:=\mE_{GR}+\mu \int_{\Sigma} |\mathring{L}|^4\dvol+\nu \int_{\Sigma}|\mathring{L}^2|^2 \dvol, \quad \mu, \nu \in \bR.
\end{equation*}
In slight abuse of notation, we denote by $ \mE^{(\mu,\nu)}_{\sigma,\tau}(\Phi)$ the energy $ \mE^{(\mu,\nu)}$ of the graph of the immersion $\Phi$ restricted to $B_{\sigma}(0)\backslash B_{\tau}(0)$.
\begin{lemma}
Let $\alpha=\gamma^8$ and $\beta=t\alpha$. As $\gamma$ tends to $0$, we have the expansion
    \begin{equation}\label{difference-of-energy}
    \begin{split}
    \mE^{(\mu,\nu)}(\Phi)&-\left(\mE^{(\mu,\nu)}(\Phi_1)+\mE^{(\mu,\nu)}(\Phi_2)-8\pi^2\right)\\
    =&\frac{1}{16}\left(\mB_{1, \gamma}(w,w)-\mB_{\infty, \gamma-\sqrt{\alpha}}(\mathring{u}_{\alpha},\mathring{u}_{\alpha})-\mB_{1, 0}(q_{\beta^{-1}},q_{\beta^{-1}})\right)+O(\gamma^{17})\\
    =&18\Vol(\bS^3) \gamma^{16}\sum_{i=1}^{N_2}|p_i|^2
            -12t\Vol(\bS^3)\gamma^{16}\sum_{i=1}^{N_2}\langle p_i, r_i\rangle + O(\gamma^{17}).
    \end{split}
\end{equation}
\end{lemma}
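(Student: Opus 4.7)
The plan is a three-step reduction. First, on $\mU$ the cut-off forces $w=\mathring u_\alpha$, so $\Phi$ coincides there with $\mathring\Phi_{1,\alpha}$; similarly $\Phi=\Phi_{2,\beta^{-1}}$ on $\mV$. Combining conformal invariance of $\mE^{(\mu,\nu)}$ under translation and dilation with Theorem \ref{Main 2} yields $\mE^{(\mu,\nu)}(\mathring\Phi_{1,\alpha})=\mE^{(\mu,\nu)}(\Phi_1)-8\pi^2$ and $\mE^{(\mu,\nu)}(\Phi_{2,\beta^{-1}})=\mE^{(\mu,\nu)}(\Phi_2)$. Subtracting the energies on the removed ends and adding the neck energy recasts the left-hand side of (\ref{difference-of-energy}) as
\begin{equation*}
\mE^{(\mu,\nu)}\bigl((z,w)|_{\gamma-\sqrt\alpha<|z|<1+\sqrt\alpha}\bigr)-\mE^{(\mu,\nu)}\bigl((z,\mathring u_\alpha)|_{|z|>\gamma-\sqrt\alpha}\bigr)-\mE^{(\mu,\nu)}\bigl((z,v_{\beta^{-1}})|_{|z|<1+\sqrt\alpha}\bigr).
\end{equation*}

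Second, I apply Lemma \ref{technique--lemma} on each of the three graph regions to approximate $\int|\nabla^\perp H|^2\sqrt{\det G}\,d^4z$ by $\tfrac{1}{16}\int|\nabla\Delta\phi|^2\,d^4z$. The remaining ingredients of $\mE^{(\mu,\nu)}$---namely $-|L^tH|^2+7|H|^4$ from $\mE_{GR}$ and the pointwise conformal invariants $\mu|\mathring L|^4+\nu|\mathring L^2|^2$---are each bounded pointwise by $|D^2\phi|^4$. Using the decay estimates (\ref{apriori-estimate-u}), (\ref{apriori-estimate-v}), (\ref{apriori-estimate-w}) and the coefficient bounds from Section \ref{coefficients}, one finds $\int|D^2\phi|^4\dvol=O(\alpha^4/\gamma^4)=O(\gamma^{28})$ on the cut-off end of $\mathring u_\alpha$, $O(\beta^4)=O(\gamma^{32})$ on that of $v_{\beta^{-1}}$, and $O(\alpha^4)=O(\gamma^{32})$ on the neck; for the conformal invariants I pull them back via inversion and scaling to integrals over disks of radii $O(\gamma^7),O(\gamma^8)$ in the original $\Phi_i$ charts, again bounded by $O(\gamma^{28})$. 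Absorbing all these into the $O(\gamma^{17})$ error, together with the transition-annulus estimates in Lemma \ref{triharmonic-energy}, establishes the first equality of (\ref{difference-of-energy}).

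Third, Lemma \ref{energy-savings} lets me further replace $\mB_{\infty,\gamma-\sqrt\alpha}(\mathring u_\alpha,\mathring u_\alpha)$ by $\mB_{\infty,\gamma}(\mathring p_\alpha,\mathring p_\alpha)$ and $\mB_{1+\sqrt\alpha,0}(v_{\beta^{-1}},v_{\beta^{-1}})$ by $\mB_{1,0}(q_{\beta^{-1}},q_{\beta^{-1}})$, both modulo $O(\gamma^{17})$. Substituting the explicit formulas of Lemmas \ref{energy-savings} and \ref{triharmonic-energy}, the $f_iX_i$ block cancels its $\tfrac{4}{3}\alpha^4/\gamma^4\sum|q^{(1)}_i|^2$ and $4\beta^4\sum|s^{(1)}_i|^2$ contributions, the $h_iW_i$ block cancels its $\tfrac{128}{3}\alpha^4/\gamma^4\sum|q^{(3)}_i|^2$ contribution, and in the $g_iY_i$ block the $96\alpha^2\gamma^{-2}\sum|p_i|^2$ piece of $\mB_{\infty,\gamma}(\mathring p_\alpha,\mathring p_\alpha)$ cancels the matching part of $96\alpha^2(\gamma^{-2}+3)\sum|p_i|^2$ in $\mB_{1,\gamma}(w,w)$, leaving $288\Vol(\bS^3)\alpha^2\sum|p_i|^2-192\Vol(\bS^3)\alpha\beta\sum\langle p_i,r_i\rangle$. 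With $\alpha=\gamma^8$ and $\beta=t\alpha$, dividing by $16$ gives the claimed $18\Vol(\bS^3)\gamma^{16}\sum|p_i|^2-12t\Vol(\bS^3)\gamma^{16}\sum\langle p_i,r_i\rangle$, all residual terms being $O(\beta^2\gamma^2)=O(\gamma^{18})$ or smaller.

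The main obstacle is the block-by-block bookkeeping: each of the four six-dimensional coefficient blocks computed in Section \ref{coefficients} produces many candidate leading terms, and one must verify that those of order $\alpha^2/\gamma^2$, $\alpha^4/\gamma^4$, or $\beta^4$ cancel precisely between $\mB_{1,\gamma}(w,w)$ and the ``saving'' bilinear forms so that only the anticipated $O(\alpha^2)$ pieces from the $g_iY_i$ block survive. These cancellations are forced by the structure of the boundary-value problem, but confirming them requires inserting the explicit coefficients of Lemmas \ref{corfficient-e}--\ref{corfficient-h} into the quadratic forms of Lemmas \ref{energy-e}--\ref{energy-h}, a direct if laborious calculation.
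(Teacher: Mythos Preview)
Your proposal is correct and follows essentially the same approach as the paper: reduce the energy difference to graph energies on three regions via Theorem \ref{Main 2} and conformal invariance, replace each $\mE^{(\mu,\nu)}$ by $\tfrac{1}{16}\mB$ using Lemma \ref{technique--lemma} together with the $|L|^4$-type bounds from (\ref{apriori-estimate-u})--(\ref{apriori-estimate-w}), and then invoke Lemmas \ref{energy-savings} and \ref{triharmonic-energy} to extract the surviving $g_iY_i$-block contributions. Your explicit block-by-block cancellation check is more detailed than the paper's terse ``combining with Lemma \ref{energy-savings} and \ref{triharmonic-energy} yields,'' but the logic and the final arithmetic are the same.
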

\begin{proof}
By the construction in Section \ref{connected-sum}, we know that
\begin{equation}
\begin{split}
    \mE^{(\mu,\nu)}(\Phi)&-\left(\mE^{(\mu,\nu)}(\Phi_1)+\mE^{(\mu,\nu)}(\Phi_2)-8\pi^2\right)\\
    &=\mE^{(\mu,\nu)}_{1+\sqrt{\alpha},\gamma-\sqrt{\alpha}}(w)-\mE^{(\mu,\nu)}_{\infty,\gamma-\sqrt{\alpha}}(\mathring{u}_{\alpha})-\mE^{(\mu,\nu)}_{1+\sqrt{\alpha},0}(v_{\beta^{-1}})
    \end{split}
\end{equation}

Using apriori estimates (\ref{apriori-estimate-u}), (\ref{apriori-estimate-v}) and (\ref{apriori-estimate-w}), we have
    \begin{equation}
    \int_{|z|\geq \gamma-\sqrt{\alpha}} |L|^4\dvol\leq C\int_{\gamma-\sqrt{\alpha}}^{\infty}\left(\frac{\alpha}{r^2}\right)^4 r^3dr=O(\gamma^{28}),
\end{equation}
\begin{equation}
    \int_{0\leq|z|\leq 1+\sqrt{\alpha}} |L|^4\dvol\leq C\beta^4\int_{0}^{1} r^3dr= O(\gamma^{32}),
\end{equation}
and 
\begin{equation}
    \int_{\gamma-\sqrt{\alpha}\leq|z|\leq 1+\sqrt{\alpha}}|L|^4\dvol\leq O(\gamma^{32}).
\end{equation}
Besides, apriori estimates (\ref{apriori-estimate-w}) verifies the assumptions of Lemma \ref{technique--lemma}. Combining with Lemma \ref{energy-savings} and \ref{triharmonic-energy} yields
\begin{equation*}
    \begin{split}
    \mE^{(\mu,\nu)}(\Phi)&-\left(\mE^{(\mu,\nu)}(\Phi_1)+\mE^{(\mu,\nu)}(\Phi_2)-8\pi^2\right)\\
    =&\mE^{(\mu,\nu)}_{1+\sqrt{\alpha},\gamma-\sqrt{\alpha}}(w)-\mE^{(\mu,\nu)}_{\infty,\gamma-\sqrt{\alpha}}(\mathring{u}_{\alpha})-\mE^{(\mu,\nu)}_{1+\sqrt{\alpha},0}(v_{\beta^{-1}})\\
    =&\frac{1}{16}\left(\mB_{1, \gamma}(w,w)-\mB_{\infty, \gamma-\sqrt{\alpha}}(\mathring{u}_{\alpha},\mathring{u}_{\alpha})-\mB_{1, 0}(q_{\beta^{-1}},q_{\beta^{-1}})\right)+O(\gamma^{17})\\
    =&18\Vol(\bS^3) \gamma^{16}\sum_{i=1}^{N_2}|p_i|^2
            -12t\Vol(\bS^3)\gamma^{16}\sum_{i=1}^{N_2}\langle p_i, r_i\rangle + O_t(\gamma^{17}).
    \end{split}
\end{equation*}

\end{proof}
\section{Proof of Main Results}\label{Conclusion}
In this section, we will finish the proof of Theorem \ref{Main 1}. It should be noted that if the origin has more than one preimage under $\Phi_1$, we can carry out the whole construction on one end of $\mathring{\Phi}_1$ and compactify other ends via inversions as Bauer and Kuwert did in \cite{Bauer-Kuwert03}. Therefore, we know that Theorem \ref{Main 1} will follow provided that 
\begin{equation}\label{Matrix multi}
   \sum_{i=1}^{N_2}\langle p_i, r_i\rangle= \langle \mathring{P}, \mathring{R} \rangle > 0.
\end{equation}
The following crucial lemma allows us to choose a suitable rotation for the constructed connected sum so that \ref{Matrix multi} holds.
\begin{lemma}
    Let $\mathring{P}, \mathring{R}: \mathbb{R}^4 \times \mathbb{R}^4 \to \mathbb{R}^k $ be bilinear forms which are symmetric tracefree. Then, there exists orthogonal transformations $S \in \mathbb{O}(4) $ and $T \in \mathbb{O}(k)$ such that the form $\mathring{P}_{S, T}(\zeta, \zeta)=T\mathring{P}(S^{-1} \zeta, S^{-1} \zeta)$ satisfies $\langle \mathring{P}_{S, T}, \mathring{R} \rangle  \geq 0.$ Furthermore, if $\mathring{P}, \mathring{R}: \mathbb{R}^4 \times \mathbb{R}^4 \to \mathbb{R}^k $ both nonzero, then  $\langle \mathring{P}_{S, T}, \mathring{R} \rangle  > 0.$
\end{lemma}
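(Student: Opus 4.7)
My plan is to combine compactness of $\bO(4)\times\bO(k)$ with an irreducibility argument from representation theory. First I would define the continuous function
\[
F\colon \bO(4)\times\bO(k)\to\bR,\qquad F(S,T):=\langle \mathring{P}_{S,T},\mathring{R}\rangle,
\]
and set $M:=\max F$, which exists since the domain is compact. The lemma then reduces to two claims: $M\geq 0$ in general, and $M>0$ when both $\mathring{P}$ and $\mathring{R}$ are nonzero.

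The weak inequality $M\geq 0$ is elementary: since $-I_k\in\bO(k)$ and $F(S,-T)=-F(S,T)$, the image of $F$ is symmetric about $0$; if $M<0$ one could replace a maximizer $(S_0,T_0)$ by $(S_0,-T_0)$ to produce a strictly larger value, contradicting maximality.

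For the strict inequality I would argue by contradiction. If $M=0$, the same sign symmetry forces $\min F=0$, so $F\equiv 0$. This says $\mathring{R}$ is orthogonal, with respect to the natural inner product on $V:=\Sym^2_0(\bR^4)\otimes\bR^k$, to every element of the $G$-orbit of $\mathring{P}$ for $G:=\bO(4)\times\bO(k)$. The linear span of that orbit is a $G$-invariant subspace of $V$, so if $V$ is irreducible as a $G$-module the span must be either $\{0\}$ (forcing $\mathring{P}=0$) or all of $V$ (forcing $\mathring{R}=0$), contradicting the hypothesis either way.

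The substantive point, and the one I expect to be the chief obstacle, is the irreducibility of $V=\Sym^2_0(\bR^4)\otimes\bR^k$ as a $G$-module. This reduces to two standard representation-theoretic facts: $\Sym^2_0(\bR^n)$ is an irreducible $\bO(n)$-module for $n\geq 3$ (its complexification is the highest weight representation $2\omega_1$), and $\bR^k$ is the standard irreducible $\bO(k)$-representation; real irreducibility of the outer tensor product then follows because both factors are of real type. Once this is in hand, the contradiction in the previous paragraph closes the proof, and the rotations $(S,T)$ required by the lemma are obtained as any maximizer of $F$.
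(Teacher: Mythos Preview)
Your proof is correct and, at heart, uses the same two ingredients as the paper---compactness of $\bO(4)\times\bO(k)$ and irreducibility of $\Sym^2_0(\bR^4)$ under $\bO(4)$---but you organize them differently. The paper separates the $T$- and $S$-optimizations: for fixed $S$ it forms the $k\times k$ matrix $A(S)_{ij}=\tr(S\mathring P_jS^\top\mathring R_i)$, uses the singular value decomposition to show $\max_{T}\tr(T^\top A(S))=\lVert A(S)\rVert_{\mathrm{tr}}\geq 0$, and for strictness argues that $A(S)\equiv 0$ would force some nonzero component $\mathring R_{i_0}$ to be orthogonal to the full $\bO(4)$-orbit of some nonzero $\mathring P_{j_0}$ in $\Sym^2_0(\bR^4)$. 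Your argument is more global: the sign trick $T\mapsto -T$ replaces the SVD step entirely, and you invoke irreducibility of the tensor product $\Sym^2_0(\bR^4)\otimes\bR^k$ under $\bO(4)\times\bO(k)$ rather than of the first factor alone. Your route is cleaner and buys you the conclusion in one stroke, at the cost of citing the (standard but slightly heavier) fact that the outer tensor product of absolutely irreducible real representations is again irreducible; the paper's route stays closer to bare linear algebra and only needs $\bO(4)$-irreducibility of traceless symmetric matrices.
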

\begin{proof}
Since $\mathring{P}$ takes values in $\mathbb{R}^k$,  we can  express $\mathring{P}$ as $\mathring{P}=\left(\mathring{P}_1, \mathring{P}_2, \ldots, \mathring{P}_k\right)$, where each $\mathring{P}_i: \mathbb{R}^4 \times$ $\mathbb{R}^4 \rightarrow \mathbb{R}$ is a scalar-valued symmetric bilinear form. $\mathring{P}_i$ corresponds to a symmetric $4 \times 4$ matrix (also denoted $\mathring{P}_i$ ) via $\mathring{P}_i(u, v)=u^{\top} \mathring{P}_i v$. Moreover, $\operatorname{Tr}\left(\mathring{P}_i\right)=0$ ensures that no component has a bias toward the identity matrix. Similarly, we have
$\mathring{R}=\left(\mathring{R}_1, \mathring{R}_2, \ldots, \mathring{R}_k\right)$, with each $\mathring{R}_i$ symmetric and tracefree.

For $S \in \mathbb{O}(4)$, the transformation $S^{-1} \zeta$ changes the basis of $\mathbb{R}^4$.
The transformed bilinear form becomes:
\begin{equation}
\mathring{P}^S(\zeta, \zeta):=\mathring{P}\left(S^{-1} \zeta, S^{-1} \zeta\right)=\left(S \mathring{P}_1 S^{\top}, \ldots, S \mathring{P}_k S^{\top}\right), 
\end{equation}
where $S$ acts on each component $\mathring{P}_i$.
For $T \in \mathbb{O}(k)$, the transformation is given by:
\begin{equation}
\mathring{P}_{S, T}(\zeta, \zeta):=T \mathring{P}^S(\zeta, \zeta)=\left(\sum_{j=1}^k T_{1 j} S \mathring{P}_j S^{\top}, \ldots, \sum_{j=1}^k T_{k j} S \mathring{P}_j S^{\top}\right). 
\end{equation}
Then the inner product between $\mathring{P}_{S, T}$ and $\mathring{R}$ is:
\begin{equation}
\left\langle \mathring{P}_{S, T}, \mathring{R}\right\rangle=\sum_{i=1}^k \operatorname{Tr}\left(\left(\sum_{j=1}^k T_{i j} S \mathring{P}_j S^{\top}\right) \mathring{R}_i\right).
\end{equation}
By simplification, it becomes:
\begin{equation}
\left\langle \mathring{P}_{S, T}, \mathring{R}\right\rangle=\sum_{i, j=1}^k T_{i j} \operatorname{Tr}\left(S \mathring{P}_j S^{\top} \mathring{R}_i\right).
\end{equation}
For fixed $S \in \mathbb{O}(4)$, we define the matrix $A(S) \in \mathbb{R}^{k \times k}$ with entries $A_{i j}(S)=\operatorname{Tr}\left(S \mathring{P}_j S^{\top} \mathring{R}_i\right)$. The inner product $\left\langle \mathring{P}_{S, T}, \mathring{R}\right\rangle$ becomes $\operatorname{Tr}\left(T^{\top} A(S)\right)$.

\textbf{Claim:} the maximum of $\operatorname{Tr}\left(T^{\top} A(S)\right)$ over $ \mathbb{O}(k)$ can be achieved by some matrix $T$ and is given by the trace norm of $A(S)$: 
\begin{equation*}
\max _{T \in \mathbb{O}(k)} \operatorname{Tr}\left(T^{\top} A(S)\right)=\|A(S)\|_{\mathrm{tr}}:=\sum_{\alpha'=1}^{k} \sigma_{\alpha'}(S),    
\end{equation*}
where $\sigma_{\alpha'}(S)$ are the singular values of $A(S)$.

To prove this, we first apply the Singular Value Decomposition (SVD) for the matrix $A(S)$:
we can rewrite $A(S)$ as $U \Sigma V^{\top}$ (the SVD of $A(S)$), where
\begin{enumerate}
    \item $U, V \in \mathbb{O}(k)$ are orthogonal matrices;
    \item  $\Sigma$ is a diagonal matrix with non-negative non-increasing entries $\sigma_1 \geqslant  \sigma_2 \geqslant \cdots \geqslant \sigma_k \geqslant 0$ (the singular values).
\end{enumerate}
Substituting $A(S)$ by $U \Sigma V^{\top}$ in $\operatorname{Tr}\left(T^{\top} A(S)\right)$, we obtain
\begin{equation}
\operatorname{Tr}\left(T^{\top} A(S)\right)=\operatorname{Tr}\left(T^{\top} U \Sigma V^{\top}\right). 
\end{equation}
Using the cyclic property of the trace $(\operatorname{Tr}(A B C)=\operatorname{Tr}(B C A))$, we have 
\begin{equation}
\operatorname{Tr}\left(T^{\top} A(S)\right)=\operatorname{Tr}(\Sigma W), \quad W:=V^{\top} T^{\top} U.
\end{equation}
The trace $\operatorname{Tr}(W \Sigma)$ is the sum of the diagonal entries of $W \Sigma$ :
\begin{equation}
\operatorname{Tr}( \Sigma W)=\sum_{i=1}^k W_{\alpha' \alpha'} \sigma_{\alpha'}  
\end{equation}
By the Cauchy--Schwarz inequality and the orthogonality of $W$, it is easy to check that each diagonal entry $W_{\alpha' \alpha'}$ satisfies $\left|W_{\alpha' \alpha'}\right| \leqslant 1$. So the maximum of $\operatorname{Tr}\left(T^{\top} A(S)\right)$ is achieved when $W$ is the identity matrix:
\begin{equation}
\max _{W \in \mathbb{O}(k)} \operatorname{Tr}( \Sigma W)=\sum_{\alpha'=1}^k \sigma_{\alpha'} . 
\end{equation}
Equivalently, the maximum of $\operatorname{Tr}\left(T^{\top} A(S)\right)$ over $\mathbb{O}(k)$ is:
\begin{equation}
\max _{T \in \mathbb{O}(k)} \operatorname{Tr}\left(T^{\top} A(S)\right)=\sum_{\alpha'=1}^k \sigma_{\alpha'}=\|A(S)\|_{\mathrm{tr}}. 
\end{equation}
It can be realized by some matrix $T \in \mathbb{O}(k)$ because of the compactness of $\mathbb{O}(k)$.
The claim follows immediately.

Since all $\sigma_{\alpha'}$ is non-negative, the trace norm $\|A(S)\|_{\mathrm{tr}}$ is always non-negative. This proves the first part of the lemma.

Furthermore, the norm $\|\cdot\|_{\mathrm{tr}}$ vanishes if and only if all singular values are zeros, which implies $A(S)=0$ for any $S \in \mathbb{O}(4)$. Recall that the orthogonal group $\mathbb{O}(4)$ acts on the space of traceless symmetric $4\times4$ matrices via conjugation:
$P_j \mapsto S P_j S^{\top}$, $S \in \mathbb{O}(4)$ and this is an irreducible representation of $\mathbb{O}(4)$: the set $\left\{S P_j S^{\top} \mid S \in O(4)\right\}$ spans the entire space of traceless symmetric
$4\times4$ matrices for any non zero $P_j$, $1\leq j\leq k$.

Now suppose $\mathring{P}, \mathring{R}: \mathbb{R}^4 \times \mathbb{R}^4 \to \mathbb{R}^k $ are both non-vanishing matrices. If $A(S)=0$ for any $S \in \mathbb{O}(4)$, there exists $\mathring{P}_{j_0} \neq 0$ and $\mathring{R}_{i_0} \neq 0$ satisfying $\operatorname{Tr}\left(S \mathring{P}_{j_0} S^{\top} \mathring{R}_{i_0}\right)=0$ for any $S \in O(4)$. Since $\left\{S \mathring{P}_{j_0} S^{\top} \mid S \in O(4)\right\}$ spans the entire space of traceless symmetric
$4\times4$ matrices, we get that 
$\operatorname{Tr}( P \mathring{R}_{i_0})=0$ for all traceless symmetric
$4\times4$ matrices $P$, which implies that $\mathring{R}_{i_0}=0$, a contradiction!
Hence, $\|A(S)\|_{\mathrm{tr}}>0$ for some $S \in \mathbb{O}(4)$. The proves the second part of the lemma.
 \end{proof}

\subsection*{Acknowledgements}

The authors would like to thank Yuchen Bi, Tzu-Mo Kuo, Xuezhang Chen, Qing Han, Xianzhe Dai, Ao Sun, Guofang Wei and Jie Zhou for useful discussions. Thank Jeffrey Case for fruitful comments and pointing out relevant references. We also extend our gratitude to Dorian Martino for identifying typos and providing valuable corrections to this work. ZY is supported by a Simons Travel Grant.

\bibliography{mybibnew}
\bibliographystyle{alpha}

\end{document}